\newtheorem{theorem}{Theorem}
\theoremstyle{plain}
\newtheorem{definition}[theorem]{Definition}
\newtheorem{lemma}[theorem]{Lemma}
\newtheorem{notation}[theorem]{Notation}
\newtheorem{proposition}[theorem]{Proposition}
\newtheorem{remark}[theorem]{Remark}
\numberwithin{equation}{section}
\begin{document}
\title[Energy conditions]{Energy conditions and twisted localizations of
operators}

\begin{abstract}
We show that the energy conditions are \textbf{not} necessary for
boundedness of fractional Riesz transforms $\mathbf{R}^{\alpha ,n}$ for $%
0\leq \alpha <n$ in dimension $n\geq 2$.

We also give a weak converse, namely that the energy conditions are
necessary for boundedness of families of twisted localizations of fractional
singular integrals $\mathbf{T}^{\alpha }$ having the positive gradient
property - however, the kernels of these localizations satisfy only
one-sided Calder\'{o}n-Zygmund smoothness estimates.
\end{abstract}

\author[E.T. Sawyer]{Eric T. Sawyer}
\address{ Department of Mathematics \& Statistics, McMaster University, 1280
Main Street West, Hamilton, Ontario, Canada L8S 4K1 }
\email{sawyer@mcmaster.ca}
\thanks{Research supported in part by NSERC}
\date{January 11, 2017}
\maketitle
\tableofcontents

\section{Introduction}

An important `two weight theorem' for the Hilbert transform was obtained
early on by Nazarov, Treil and Volberg \cite{NTV4}, who proved that the
Hilbert transform $H$, with convolution kernel $K\left( x\right) =\frac{1}{x}
$, was bounded from $L^{2}\left( \sigma \right) $ to $L^{2}\left( \omega
\right) $, i.e.%
\begin{equation*}
\int_{\mathbb{R}}\left\vert H_{\sigma }f\left( x\right) \right\vert
^{2}d\omega \left( x\right) =\int_{\mathbb{R}}\left\vert \int_{\mathbb{R}}%
\frac{f\left( y\right) }{y-x}d\sigma \left( y\right) \right\vert ^{2}d\omega
\left( x\right) \leq \mathfrak{N}_{H}^{2}\int_{\mathbb{R}}\left\vert f\left(
y\right) \right\vert ^{2}d\sigma \left( y\right) ,
\end{equation*}%
for all $f\in L^{2}\left( \sigma \right) $ uniformly over suitable
truncations of the kernel $K$, provided that the following three conditions
held:

\begin{enumerate}
\item the Muckenhoupt condition,%
\begin{equation*}
\mathcal{A}_{2}\equiv \sup_{\text{intervals }I}\int_{\mathbb{R}}\frac{%
\left\vert I\right\vert }{\left\vert I\right\vert ^{2}+x^{2}}d\omega \left(
x\right) \cdot \int_{\mathbb{R}}\frac{\left\vert I\right\vert }{\left\vert
I\right\vert ^{2}+y^{2}}d\sigma \left( y\right) <\infty ,
\end{equation*}

\item the testing conditions,%
\begin{equation*}
\mathfrak{T}_{H}\equiv \sup_{\text{intervals }I}\sqrt{\frac{1}{\left\vert
I\right\vert _{\sigma }}\int_{I}\left\vert H_{\sigma }\mathbf{1}%
_{I}\right\vert ^{2}d\omega }<\infty \text{ and }\mathfrak{T}_{H}^{\ast
}\equiv \sup_{\text{intervals }I}\sqrt{\frac{1}{\left\vert I\right\vert
_{\omega }}\int_{I}\left\vert H_{\omega }\mathbf{1}_{I}\right\vert
^{2}d\sigma }<\infty ,
\end{equation*}

\item and the pivotal conditions,%
\begin{equation}
\mathcal{V}_{2}\equiv \sup_{\text{intervals }I}\sqrt{\sup_{I=\dot{\cup}I_{r}}%
\frac{1}{\left\vert I\right\vert _{\sigma }}\sum_{r=1}^{\infty }\left( \int_{%
\mathbb{R}}\frac{\left\vert I_{r}\right\vert }{\left\vert I_{r}\right\vert
^{2}+y^{2}}d\sigma \left( y\right) \right) ^{2}\left\vert I_{r}\right\vert
_{\omega }}<\infty ,  \label{piv for}
\end{equation}%
\begin{equation}
\mathcal{V}_{2}^{\ast }\equiv \sup_{\text{intervals }I}\sqrt{\sup_{I=\dot{%
\cup}I_{r}}\frac{1}{\left\vert I\right\vert _{\omega }}\sum_{r=1}^{\infty
}\left( \int_{\mathbb{R}}\frac{\left\vert I_{r}\right\vert }{\left\vert
I_{r}\right\vert ^{2}+y^{2}}d\omega \left( y\right) \right) ^{2}\left\vert
I_{r}\right\vert _{\sigma }}<\infty .  \label{piv back}
\end{equation}
\end{enumerate}

The first two conditions are necessary for boundedness of $H$, but the third
condition is not. This was established in Lacey, Sawyer and Uriarte-Tuero 
\cite{LaSaUr2}, where a substitute for the pair of pivotal conditions was
introduced, namely the pair of \textbf{energy conditions},%
\begin{equation}
\mathcal{E}_{2}\equiv \sup_{\text{intervals }I}\sqrt{\sup_{I=\dot{\cup}I_{r}}%
\frac{1}{\left\vert I\right\vert _{\sigma }}\sum_{r=1}^{\infty }\left( \int_{%
\mathbb{R}}\frac{\left\vert I_{r}\right\vert }{\left\vert I_{r}\right\vert
^{2}+y^{2}}d\sigma \left( y\right) \right) ^{2}\left\vert I_{r}\right\vert
_{\omega }\mathsf{E}\left( I_{r},\omega \right) ^{2}}<\infty ,
\label{ener for}
\end{equation}%
\begin{equation}
\mathcal{E}_{2}^{\ast }\equiv \sup_{\text{intervals }I}\sqrt{\sup_{I=\dot{%
\cup}I_{r}}\frac{1}{\left\vert I\right\vert _{\omega }}\sum_{r=1}^{\infty
}\left( \int_{\mathbb{R}}\frac{\left\vert I_{r}\right\vert }{\left\vert
I_{r}\right\vert ^{2}+y^{2}}d\omega \left( y\right) \right) ^{2}\left\vert
I_{r}\right\vert _{\sigma }\mathsf{E}\left( I_{r},\sigma \right) ^{2}}%
<\infty ,  \label{ener back}
\end{equation}%
and this pair was shown to be not only necessary for boundedness of the
Hilbert transform to hold, but in fact necessary for the Muckenhoupt and
testing conditions to hold. The quantity 
\begin{equation}
\mathsf{E}\left( J,\mu \right) ^{2}\equiv 2\frac{1}{\left\vert J\right\vert
_{\mu }}\int_{J}\frac{1}{\left\vert J\right\vert _{\mu }}\int_{J}\left\vert 
\frac{x-x^{\prime }}{\left\vert J\right\vert }\right\vert ^{2}d\mu \left(
x\right) d\mu \left( x^{\prime }\right) ,  \label{def energy}
\end{equation}%
is a one-dimensional $L^{2}$ version of the familiar normalized self-energy
of the charge distribution $\mathbf{1}_{I}\mu $ in physics, and $\mathsf{E}%
\left( \left[ a,b\right] ,\mu \right) $ takes values near $0$ for highly
concentrated distributions such as $\delta _{a}$, and values near $1$ for
highly spread out distributions such as $\mu =\delta _{a}+\delta _{b}$ (just
the opposite from self-energy in $3$-space, since the exponent $n-2$ of
Laplace's fundamental solution changes sign when $n$ goes from $3$ to $1$).

This necessity of the energy condition reinforced the $T1$ conjecture of NTV 
\cite{Vol} that boundedness of the Hilbert transform is equivalent to the
Muckenhoupt and testing conditions. And this conjecture was subsequently
proved in the two part paper \cite{LaSaShUr3}; \cite{Lac} by Lacey, Sawyer,
Shen and Uriarte-Tuero; Lacey, with the inclusion of common point masses by
Hyt\"{o}nen in \cite{Hyt2}. The energy conditions played a crucial role in
both parts \cite{LaSaShUr3} and \cite{Lac}, and their $n$-dimensional
counterparts have continued to play equally crucial roles in higher
dimensional theorems of $T1$ type, \cite{SaShUr7}-\cite{SaShUr10}, \cite%
{LaSaShUrWi}, \cite{LaWi1} and \cite{LaWi}. The known proofs of necessity of
the energy conditions broke down in higher dimensions, leaving higher
dimensional $T1$-type theorems in a state of limbo, not knowing if the
energy conditions were necessary, or if another approach was needed.

In this paper we construct families of counterexample weight pairs to show
that the energy conditions can indeed \textbf{fail} for a pair of weights,
despite boundedness of the fractional Riesz transform - the prototypical
fractional singular integral in higher dimensions. These families of
counterexamples are motivated by a weak converse result\ that we also
develop - namely that the boundedness of a large `twisted' family of
operators (related to a single `nice' singular integral) does indeed imply
the energy conditions. While this converse result may be of some theoretical
interest, it is diminished by the requirement that the testing conditions be
taken over too large a family $\left\{ \Theta _{i}\mathbf{T}_{\mathcal{J}%
}^{\alpha }\Theta _{j}\right\} _{\mathcal{J}\in \mathfrak{J}\text{ and }%
1\leq i,j\in M}$ of twisted localizations, and by the fact that the kernels
of these twisted localizations satisfy only one-sided Calder\'{o}n-Zygmund
smoothness estimates.

The counterexamples constructed here in dimension $n\geq 2$ are actually
simpler than the subtle and complicated counterexample constructed in
dimension $n=1$ by Sawyer, Shen and Uriarte-Tuero in \cite{SaShUr11}.
Indeed, the counterexample in \cite{SaShUr11} (which showed the energy
conditions are not necessary for boundedness of a certain elliptic operator
on the line) was obtained by modifying the example weight pair $\left(
\sigma ,\omega \right) $ in \cite{LaSaUr2} consisting of a Cantor measure $%
\omega $ and a discrete measure $\sigma $. In order to fail the energy
condition, the measure $\sigma $ was modified into a measure $\widehat{%
\sigma }$ by smearing out along the line each point mass in $\sigma $, so
that the local energies of the `smeared out' measure $\widehat{\sigma }$ no
longer vanished. But this `smearing out' destroyed the backward testing
condition, which then required a modification of the Hilbert transform to a
`flattened' version $H_{\flat }$, whose convolution kernel was still
elliptic $K_{\flat }\left( x\right) \approx \frac{1}{x}$, but no longer had
strictly negative derivative. This in turn forced a redistribution $\widehat{%
\omega }$ of the Cantor measure $\omega $ in order that $H_{\flat }\widehat{%
\omega }$ vanish on the support of $\widehat{\sigma }$, resulting in a
delicate and difficult recursion.

On the other hand, the extra dimension in $\mathbb{R}^{n}$ for $n\geq 2$
permits a `spreading out' of each point mass in $\sigma $ into a \emph{new}
dimension, which then requires a matching `spreading out' of the Cantor
measure $\omega $, something much simpler to deal with than that just
outlined in dimension $n=1$.

\subsection{Statements of theorems}

We state here our main theorem and proposition, but defer the definitions of
some of the terminology used in the statements, until the sections where
they are developed. First we show that the deep energy conditions are not
necessary for boundedness of fractional Riesz transforms in general.

\begin{theorem}
\label{counterexample}Let $0\leq \alpha <n$. Then there is a sequence of
pairs $\left\{ \left( \widehat{\sigma }_{N},\widehat{\omega }_{N}\right)
\right\} _{N=1}^{\infty }$ of locally finite positive Borel measures on $%
\mathbb{R}^{n}$ (actually finite sums of point masses) such that the
backward deep energy constants $\mathcal{E}_{2}^{\alpha ,\ast }\left( 
\widehat{\sigma }_{N},\widehat{\omega }_{N}\right) $ are unbounded in $N\geq
1$, and yet such that the vector Riesz transform $\mathbf{R}^{\alpha ,n}$ of
order $\alpha $ is bounded from $L^{2}\left( \widehat{\sigma }_{N}\right) $
to $L^{2}\left( \widehat{\omega }_{N}\right) $ uniformly in $N\geq 1$.
\end{theorem}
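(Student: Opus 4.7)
The plan is to build the counterexample pairs by starting from the one-dimensional construction of Lacey, Sawyer and Uriarte-Tuero \cite{LaSaUr2}, which consists of a Cantor-type measure $\omega_N$ supported on the middle-thirds Cantor set truncated at depth $N$, paired with a discrete measure $\sigma_N$ placed at carefully chosen points in the complementary gaps. That pair satisfies the $\mathcal{A}_2$ and testing conditions for the Hilbert transform uniformly in $N$, and the backward energy condition is automatic there because $\sigma_N$ is a finite sum of point masses, so $\mathsf{E}(I,\sigma_N)^2=0$ for any $I$ meeting the support of $\sigma_N$ in a single atom. This 1D pair provides a robust "skeleton" that we can lift into $\mathbb{R}^n$.

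The key step is to exploit the extra dimension. Write $\mathbb{R}^n=\mathbb{R}\times \mathbb{R}^{n-1}$, identify the supports of $\sigma_N$ and $\omega_N$ with subsets of $\mathbb{R}\times\{0\}$, and then smear each atom transversely. Concretely, for each point mass $c\,\delta_g$ of $\sigma_N$, replace it by the symmetric pair $\tfrac{c}{2}\bigl(\delta_{(g,t_g)}+\delta_{(g,-t_g)}\bigr)$, forming $\widehat{\sigma}_N$; split each Cantor atom of $\omega_N$ in an analogous symmetric way across the hyperplane $\mathbb{R}\times\{0\}$, forming $\widehat{\omega}_N$. Choose the transverse parameters $\{t_g\}$ so that, relative to the geometry of the $N$-th Cantor level, the local transverse second moment of $\widehat{\sigma}_N$ inside every cube $I$ of the relevant scale is bounded below by a fixed positive constant, but small enough that on the supports of $\widehat{\omega}_N$ the kernel of $\mathbf{R}^{\alpha,n}$ remains a controlled perturbation of its restriction to $\mathbb{R}\times\{0\}$.

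Failure of $\mathcal{E}_2^{\alpha,\ast}(\widehat{\sigma}_N,\widehat{\omega}_N)$ is then the easy half: every cube $I$ now meets two transversely separated atoms of $\widehat{\sigma}_N$, so $\mathsf{E}(I,\widehat{\sigma}_N)^2$ stays bounded away from zero at the appropriate scales, and by tuning the $t_g$ to grow suitably with $N$ the right hand side of (\ref{ener back}) diverges. The main obstacle is uniform boundedness of $\mathbf{R}^{\alpha,n}:L^2(\widehat{\sigma}_N)\to L^2(\widehat{\omega}_N)$, i.e. the uniform Muckenhoupt $\mathcal{A}_2^{\alpha}$ and forward and backward testing bounds for the smeared pair. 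Here the $\mathcal{A}_2^{\alpha}$ estimate is a straightforward perturbation of the 1D bound once the $t_g$ are small. The testing estimates rely on the odd symmetry of the Riesz kernel $\tfrac{x-y}{|x-y|^{n+1-\alpha}}$ combined with the exact $\pm t_g$ symmetry of the smearing: the transverse components of $\mathbf{R}^{\alpha,n}\mathbf{1}_I\widehat{\sigma}_N$ are forced to cancel on the support of $\widehat{\omega}_N$ (and symmetrically for the dual), reducing the tangential component to a controlled perturbation of the 1D Hilbert (or Riesz) transform applied to $\sigma_N$, for which testing is already known. Propagating this cancellation at every dyadic scale while keeping the transverse smearing large enough to kill the energy is the delicate balance on which the whole construction hinges.
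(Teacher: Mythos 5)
Your proposal correctly identifies the \emph{shape} of the construction (start from the 1D pair of Lacey--Sawyer--Uriarte-Tuero, smear both measures off the line into a symmetric pair, use the extra dimension), but the mechanism you offer for the testing estimates has a genuine gap, and the paper's proof shows exactly where it would break. You say the smearing of $\sigma_N$ should be ``small enough that on the supports of $\widehat\omega_N$ the kernel of $\mathbf R^{\alpha,n}$ remains a controlled perturbation of its restriction to $\mathbb R\times\{0\}$,'' and then rely on $\pm t_g$ symmetry to make the transverse component cancel and leave the tangential component a perturbation of the 1D Hilbert transform. Those two requirements are in direct conflict with the energy failure you need. For the backward energy constant in (\ref{ener back}) to be unbounded, the transverse separation in each gap at scale $k$ must be comparable to the gap width $\approx 3^{-k}$; any fixed lower bound on the energy $\mathsf E(\widehat{H_r^\ell},\widehat\sigma_N)^2$ forces $t_g\gtrsim |H_r^\ell|$. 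But then the $R_1$-component of $\mathbf R^{\alpha,n}\widehat\omega_N$ evaluated at the transversely-shifted 1D zero is \emph{not} a small perturbation of $H\omega_N(z_j^k)=0$: the leading near-field contribution at scale $k$ is of size $(3/2)^k$, and the calculation the paper does for (\ref{claim Poisson}) makes clear that unless this near-field contribution vanishes \emph{exactly}, the backward testing sum blows up like $N$.

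The paper's resolution, which is the idea your proposal is missing, is that one must \emph{redefine} the location of the atoms of $\widehat\sigma_N$ to sit at the genuine zeros $z_i^k$ of the \emph{two-dimensional} function $R_1^{1,2}\widehat\omega_N$ on the horizontal segments $H_i^k\times\{\pm\frac{1}{4\cdot3^{k+1}}\}$, not at the smeared images of the 1D zeros. This in turn requires (a) a carefully tailored placement of the atoms of $\widehat\omega_N$ — they are not just split vertically but shifted horizontally to the four corners of the squares $\widehat{H_i^k}$, with the slope constraint (\ref{slope}) ensuring the $45^\circ$ (or $\theta_{n,\alpha}$ in the general case) cone condition — so that (b) the positive gradient property of the Riesz kernel forces $R_1^{1,2}\widehat\omega_N$ to be strictly monotone on each horizontal segment and to run from $+\infty$ to $-\infty$, guaranteeing a unique zero $z_i^k$, and (c) the resulting exact vanishing upgrades the naive estimate $(3/2)^k$ to the Poisson estimate $\mathrm P^1(\widehat{L_r^\ell},\widehat\omega_N)$ in (\ref{claim Poisson}), which is what makes the backward testing sum close. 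A similar subtlety affects your transverse ``cancellation by symmetry'': the atoms of $\widehat\omega_N$ are themselves off the axis of symmetry, so $R_2^{1,2}\widehat\omega_N$ at the atoms of $\widehat\sigma_N$ is not zero; the paper has to prove the quantitative estimate $R_2^{1,2}\widehat\omega_N(z_j^\ell,\frac{1}{4\cdot3^{\ell+1}})\approx 2^{-N}3^\ell$ in (\ref{estimate}) and show this is just barely small enough for the testing condition. Without the zero-placement step and the explicit estimates replacing the ``cancellation'' and ``perturbation'' heuristics, the argument does not go through.
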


In the converse direction, we can derive the deep energy conditions, and
also the bounded overlap energy conditions, from uniform boundedness of a
large enough family of operators with uniform one-sided Calder\'{o}n-Zygmund
norms.

\begin{proposition}
\label{necessity}Let $\left( \sigma ,\omega \right) $ be a pair of locally
finite positive Borel measures on $\mathbb{R}^{n}$. Let $0\leq \alpha <n$
and suppose that $\mathbf{T}^{\alpha }$ is a standard $\alpha $-fractional
singular integral on $\mathbb{R}^{n}$ that is both \emph{strongly elliptic}
and satisfies the \emph{positive gradient property}. In particular we can
take $\mathbf{T}^{\alpha }=\mathbf{R}^{\alpha ,n}$. If the family $\left\{
\Theta _{i}\mathbf{T}_{\mathcal{J}}^{\alpha }\Theta _{j}\right\} _{\mathcal{J%
}\in \mathfrak{J}\text{ and }1\leq i,j\in M}$ of \emph{twisted localizations}
of the operator $\mathbf{T}^{\alpha }$ satisfies the testing conditions
uniformly in $\mathcal{J}\in \mathfrak{J}$ and $1\leq i,j\in M$, then the
deep and bounded overlap energy conditions hold, and moreover there is a
positive constant $C$, depending only on $n$, $\alpha $, $\left\Vert \mathbf{%
T}^{\alpha }\right\Vert _{CZ_{\alpha }}$ and the constants in the
definitions of strongly elliptic and positive gradient condition, such that%
\begin{equation*}
\mathcal{E}_{2}^{\alpha ,\limfunc{deep}}\leq \mathcal{E}_{2}^{\alpha ,%
\limfunc{overlap}}\leq C\sup_{\mathcal{J}\in \mathfrak{J}\text{ and }1\leq
i,j\leq M}\mathfrak{T}_{\Theta _{i}\mathbf{T}_{\mathcal{J}}^{\alpha }\Theta
_{j}}+C\beta \mathcal{A}_{2}^{\alpha }.
\end{equation*}%
{}
\end{proposition}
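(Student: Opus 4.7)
The plan is to establish the non-trivial inequality
\begin{equation*}
\mathcal{E}_2^{\alpha,\limfunc{overlap}} \leq C \sup_{\mathcal{J}\in\mathfrak{J},\,1\leq i,j\leq M} \mathfrak{T}_{\Theta_i \mathbf{T}_{\mathcal{J}}^\alpha \Theta_j} + C\beta\mathcal{A}_2^\alpha,
\end{equation*}
since $\mathcal{E}_2^{\alpha,\limfunc{deep}} \leq \mathcal{E}_2^{\alpha,\limfunc{overlap}}$ is merely a comparison of suprema taken over disjoint versus bounded-overlap subdecompositions. I will fix a cube $I$ together with a bounded-overlap subdecomposition $\{I_r\}$ of $I$, and aim to bound the sum $\sum_r \mathrm{P}^\alpha(I_r,\sigma)^2\,\mathsf{E}(I_r,\omega)^2\,|I_r|_\omega$ by $C(\mathfrak{T}^2 + \beta^2\mathcal{A}_2^\alpha)|I|_\sigma$. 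The core mechanism, inherited from the one-dimensional NTV--Hilbert argument, is that under the positive gradient property the kernel differences $K^\alpha(x,y)-K^\alpha(x',y)$ for $x,x'\in I_r$ and $y$ in the far field of $I_r$ admit a one-sided \emph{lower} bound proportional to $(x-x')\cdot\mathbf{e}\,|y-c_{I_r}|^{\alpha-n-1}$ along an appropriate unit direction $\mathbf{e}$; integrating against $\sigma$ produces the Poisson tail $\mathrm{P}^\alpha(I_r,\sigma)$ times $(x-x')\cdot\mathbf{e}$, which is exactly the quantity whose $\omega$-variance over $I_r$ is $\mathsf{E}(I_r,\omega)^2\,|I_r|_\omega$.

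The role of the twisted localizations is to isolate this mechanism in a form accessible to the testing functional. For each $I_r$ and each coordinate direction, I will choose $\Theta_j$ so that the rotated or reflected kernel of $\mathbf{T}^\alpha$ has a one-sided positive partial derivative on a large cone, and choose a localization parameter $\mathcal{J}\in\mathfrak{J}$ restricting the $y$-integration to that cone shifted into the far field of $I_r$; strong ellipticity then guarantees that this restricted integral still captures a fixed fraction of $\mathrm{P}^\alpha(I_r,\sigma)$. A first-order Taylor expansion of the localized kernel about $c_{I_r}$ gives, for $x\in I_r$,
\begin{equation*}
\Theta_i \mathbf{T}_{\mathcal{J},\sigma}^\alpha \Theta_j \mathbf{1}_I(x) = a_r + L_r\cdot(x-c_{I_r}) + E_r(x),
\end{equation*}
with $a_r$ constant in $x$, $|L_r|\gtrsim \mathrm{P}^\alpha(I_r,\sigma)$ by the positive-gradient lower bound, and remainder $E_r$ controlled by the near-field of $\sigma$ (producing the $\beta\mathcal{A}_2^\alpha$ term). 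Squaring, subtracting the $\omega$-mean over $I_r$ to annihilate $a_r$, and integrating against $d\omega$ on $I_r$, then averaging over the finite index set $\{1,\ldots,M\}$ so as to access every direction in which $\omega$ spreads within $I_r$, yields
\begin{equation*}
\mathrm{P}^\alpha(I_r,\sigma)^2\,\mathsf{E}(I_r,\omega)^2\,|I_r|_\omega \lesssim \max_{i,j,\mathcal{J}}\int_{I_r}\bigl|\Theta_i \mathbf{T}_{\mathcal{J},\sigma}^\alpha \Theta_j \mathbf{1}_I\bigr|^2 d\omega + \beta^2\mathcal{A}_2^\alpha\,|I_r|_\sigma.
\end{equation*}

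Summing in $r$ using bounded overlap (so that $\sum_r \int_{I_r} \leq C \int_I$), invoking the testing hypothesis on the outer cube $I$, dividing by $|I|_\sigma$ and passing to the square root completes the argument. The principal obstacle is the geometric engineering of the finite family $\{\Theta_i,\Theta_j,\mathcal{J}\}$: for each scale $I_r$ and every direction along which $\omega$ is spread, some member of the family must simultaneously align its positive-gradient direction with that spread, localize its integration to a region capturing a definite portion of $\mathrm{P}^\alpha(I_r,\sigma)$, and retain enough Calder\'on--Zygmund structure (albeit only one-sided, as the proposition explicitly acknowledges) for the testing norm to be meaningful. The remaining delicate bookkeeping is to absorb the near-field remainders $E_r$ additively into $C\beta\mathcal{A}_2^\alpha$ rather than multiplicatively into the testing constant, which is accomplished by a scale-separation argument combined with the bounded overlap of $\{I_r\}$ to keep the $\sigma$-near-field cross-interactions summable.
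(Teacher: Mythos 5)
Your proposal follows essentially the same path as the paper, differing only in technical execution. The paper's "reversal of energy" argument works directly with difference quotients of the operator: for $x,z\in J_k$ with $\frac{x-z}{|x-z|}$ in the $i$-th sector $\Theta_i B_{\mathbb{S}^{n-1}}$, the positive gradient property yields the two-sided estimate
\begin{equation*}
\frac{\bigl[\bigl(\Theta_i\mathbf{T}_{J_k}^\alpha\Theta_j\bigr)_\sigma\bigr]_1\mathbf{1}_I(x)-\bigl[\bigl(\Theta_i\mathbf{T}_{J_k}^\alpha\Theta_j\bigr)_\sigma\bigr]_1\mathbf{1}_I(z)}{\bigl[\Theta_i(x-z)\bigr]_1}\approx\int\lvert c_{J_k}-y\rvert^{\alpha-n-1}\varphi_{J_k}(\cdots)\mathbf{1}_I(y)\,d\sigma(y),
\end{equation*}
and then the paper decomposes the energy $\frac{1}{|J_k|_\omega}\iint_{J_k\times J_k}|x-z|^2\,d\omega\,d\omega$ into the sectorial pieces $\mathsf{F}_i(J_k,\omega)^2$ over $\Phi_i=\{(x,z):\frac{x-z}{|x-z|}\in\Theta_i B_{\mathbb{S}^{n-1}}\}$, each piece controlled by $\int_{J_k}|\cdot|^2d\omega$ via the elementary bound $\iint|f(x)-f(z)|^2\leq 2|J_k|_\omega\int|f|^2$. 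You instead linearize the localized operator about $c_{I_r}$ and work with the variance of the linear part, subtracting the $\omega$-mean rather than taking differences over pairs. The mean-subtraction/variance route is a little more work to set up (you must control a Taylor remainder and verify that the direction of your linear coefficient $L_r$, as $i$ varies, sweeps out all directions in which $\omega$ spreads — which is what the paper's sectorial decomposition of $J_k\times J_k$ accomplishes automatically), but both routes hinge on the same three pillars: the positive gradient lower bound to reverse the energy estimate, the finite family of pre/post rotations to cover all directions and all sectors of the far field, and the split $\mathbf{1}_I=\mathbf{1}_{I\setminus\gamma J}+\mathbf{1}_{\gamma J}$ together with bounded overlap to convert the near-field contribution into $C\beta\mathcal{A}_2^\alpha$. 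Two small points to tidy up if you develop this further: the estimate on $L_r$ should be stated for a distinguished component (after rotating by $\Theta_i$), not the full modulus, and with the correct normalization $\bigl\lvert[L_r]_1\bigr\rvert\gtrsim\mathrm{P}^\alpha(I_r,\mathbf{1}_{\text{cone}}\sigma)/|I_r|^{1/n}$; and since the twisted localization kernel is smooth in $x$ only on the interior of each $J_k$ (the one-sided CZ estimates of Lemma~\ref{CZalpha twist} are in $y$), your Taylor expansion is legitimate precisely because you expand within a single $J_k$, which you should make explicit.
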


We also show in Lemma \ref{CZalpha twist} below, that the kernels $\Theta
_{i}\mathbf{K}_{\mathcal{J}}^{\alpha }\Theta _{j}$ of the operators $\Theta
_{i}\mathbf{T}_{\mathcal{J}}^{\alpha }\Theta _{j}$ satisfy one-sided Calder%
\'{o}n-Zygmund estimates%
\begin{eqnarray*}
\left\vert \Theta _{i}\mathbf{K}_{\mathcal{J}}^{\alpha }\Theta
_{j}^{-1}\left( x,y\right) \right\vert &\lesssim &\left\Vert \mathbf{K}%
^{\alpha }\right\Vert _{CZ_{\alpha }}\left\vert x-y\right\vert ^{\alpha -n},
\\
\left\vert \nabla _{y}^{\ell }\Theta _{i}\mathbf{K}_{\mathcal{J}}^{\alpha
}\Theta _{j}^{-1}\left( x,y\right) \right\vert &\lesssim &\left\Vert \mathbf{%
K}^{\alpha }\right\Vert _{CZ_{\alpha }}\left\vert x-y\right\vert ^{\alpha
-n-\ell },\ \ \ \ \ \ell =1,2,
\end{eqnarray*}%
with smoothness in the $y$-variable only, and then we show in Lemma \ref%
{Riesz pgp} below, that the negative of the Riesz transform $\mathbf{R}%
^{\alpha ,n}$ has the positive gradient property.

Proposition \ref{necessity} is proved in Part 1 of the paper, while Theorem %
\ref{counterexample} is proved in Part 2. Each of these parts can
essentially be read independently of the other.

\part{Necessity of energy conditions for twisted localizations}

In the first part of this paper, we prove Proposition \ref{necessity} by
deriving the deep and bounded overlap energy conditions, as defined below,
from testing conditions for the family of \emph{twisted localizations} of
the $\alpha $-fractional Riesz transform $\mathbf{R}^{\alpha ,n}$ in
dimension $n$, and more generally for strongly elliptic convolution vector
operators $\mathbf{T}^{\alpha }$ in place of $\mathbf{R}^{\alpha ,n}$ that
enjoy the \emph{positive gradient property}.

\section{Standard fractional singular integrals}

Let $0\leq \alpha <n$ and $0<\delta \leq 1$. We define a $\delta $-standard $%
\alpha $-fractional Calder\'{o}n-Zygmund kernel $\mathbf{K}^{\alpha }(x,y)$
to be a vector-valued function defined on $\mathbb{R}^{n}\times \mathbb{R}%
^{n}$ whose components uniformly satisfy the following fractional size and
smoothness conditions: For $x\neq y$ in $\mathbb{R}^{n}$,%
\begin{eqnarray}
\left\vert \mathbf{K}^{\alpha }\left( x,y\right) \right\vert &\leq
&C_{CZ_{\alpha }}\left\vert x-y\right\vert ^{\alpha -n}\text{ and }%
\left\vert \nabla \mathbf{K}^{\alpha }\left( x,y\right) \right\vert \leq
C_{CZ_{\alpha }}\left\vert x-y\right\vert ^{\alpha -n-1},  \label{size} \\
\left\vert \nabla \mathbf{K}^{\alpha }\left( x,y\right) -\nabla \mathbf{K}%
^{\alpha }\left( x^{\prime },y\right) \right\vert &\leq &C_{CZ_{\alpha
}}\left( \frac{\left\vert x-x^{\prime }\right\vert }{\left\vert
x-y\right\vert }\right) ^{\delta }\left\vert x-y\right\vert ^{\alpha -n-1},\
\ \ \ \ \frac{\left\vert x-x^{\prime }\right\vert }{\left\vert
x-y\right\vert }\leq \frac{1}{2},  \notag
\end{eqnarray}%
and where the last inequality also holds for the adjoint kernel in which $x$
and $y$ are interchanged. We define the Calder\'{o}n-Zygmund norm $%
\left\Vert \mathbf{K}^{\alpha }\right\Vert _{CZ_{\alpha }}$ of $\mathbf{K}%
^{\alpha }$ to be the least constant $C_{CZ_{\alpha }}$ for which the above
display holds.

\subsection{Defining the norm inequality and testing conditions}

We now recall the precise definition of the weighted norm inequality%
\begin{equation}
\left\Vert \mathbf{T}_{\sigma }^{\alpha }f\right\Vert _{L^{2}\left( \omega
\right) }\leq \mathfrak{N}_{\mathbf{T}^{\alpha }}\left\Vert f\right\Vert
_{L^{2}\left( \sigma \right) },\ \ \ \ \ f\in L^{2}\left( \sigma \right) ,
\label{two weight'}
\end{equation}%
as in \cite{SaShUr10} for example. Let $\left\{ \eta _{\delta ,R}^{\alpha
}\right\} _{0<\delta <R<\infty }$ be a family of nonnegative functions on $%
\left[ 0,\infty \right) $ so that the truncated kernels $\mathbf{K}_{\delta
,R}^{\alpha }\left( x,y\right) =\eta _{\delta ,R}^{\alpha }\left( \left\vert
x-y\right\vert \right) \mathbf{K}^{\alpha }\left( x,y\right) $ of the
operator $\mathbf{T}^{\alpha }$ are bounded with compact support for fixed $%
x $ or $y$. Then the truncated operators 
\begin{equation*}
\mathbf{T}_{\sigma ,\delta ,R}^{\alpha }f\left( x\right) \equiv \int_{%
\mathbb{R}}\mathbf{K}_{\delta ,R}^{\alpha }\left( x,y\right) f\left(
y\right) d\sigma \left( y\right) ,\ \ \ \ \ x\in \mathbb{R}^{n},
\end{equation*}%
are pointwise well-defined, and we will refer to the pair $\left( K^{\alpha
},\left\{ \eta _{\delta ,R}^{\alpha }\right\} _{0<\delta <R<\infty }\right) $
as an $\alpha $-fractional singular integral operator, which we typically
denote by $\mathbf{T}^{\alpha }$, suppressing the dependence on the
truncations. When $\mathbf{K}^{\alpha }\left( x,y\right) $ is a $\delta $%
-standard $\alpha $-fractional Calder\'{o}n-Zygmund kernel, we say that $%
\mathbf{T}^{\alpha }=\left( \mathbf{K}^{\alpha },\left\{ \eta _{\delta
,R}^{\alpha }\right\} _{0<\delta <R<\infty }\right) $ is a $\delta $%
-standard $\alpha $-fractional singular integral.

\begin{definition}
We say that an $\alpha $-fractional singular integral operator $T^{\alpha
}=\left( K^{\alpha },\left\{ \eta _{\delta ,R}^{\alpha }\right\} _{0<\delta
<R<\infty }\right) $ satisfies the norm inequality (\ref{two weight'})
provided%
\begin{equation*}
\left\Vert T_{\sigma ,\delta ,R}^{\alpha }f\right\Vert _{L^{2}\left( \omega
\right) }\leq \mathfrak{N}_{T_{\sigma }^{\alpha }}\left\Vert f\right\Vert
_{L^{2}\left( \sigma \right) },\ \ \ \ \ f\in L^{2}\left( \sigma \right)
,0<\delta <R<\infty .
\end{equation*}
\end{definition}

It turns out that, in the presence of the Muckenhoupt conditions (\ref{def
A2}) below, the norm inequality (\ref{two weight'}) is essentially
independent of the choice of truncations used (see e.g. \cite{LaSaShUr3} in
dimension $n=1$), and this is explained in some detail in \cite{SaShUr10}.
Thus, as in \cite{SaShUr10}, we are free to use the tangent line truncations
described there throughout this paper, and in particular we interpret the
testing conditions below using the tangent line truncations:.%
\begin{equation}
\mathfrak{T}_{T^{\alpha }}\equiv \sup_{\text{cubes }I}\sqrt{\frac{1}{%
\left\vert I\right\vert _{\sigma }}\int_{I}\left\vert T_{\sigma }^{\alpha }%
\mathbf{1}_{I}\right\vert ^{2}d\omega }<\infty \text{ and }\mathfrak{T}%
_{T^{\alpha ,\ast }}\equiv \sup_{\text{cubels }I}\sqrt{\frac{1}{\left\vert
I\right\vert _{\omega }}\int_{I}\left\vert T_{\omega }^{\alpha ,\ast }%
\mathbf{1}_{I}\right\vert ^{2}d\sigma }<\infty .  \label{testing conditions}
\end{equation}

\subsection{Strong ellipticity and the positive gradient property}

Recall from \cite{SaShUr7} that a standard $\alpha $-fractional vector
singular integral $\mathbf{T}^{\alpha }$ on $\mathbb{R}^{n}$ with vector
kernel $\mathbf{K}^{\alpha }=\left( K_{j}^{\alpha }\right) _{j=1}^{J}$ is 
\emph{strongly elliptic} if for each $m\in \left\{ 1,-1\right\} ^{n}$, there
is a sequence of coefficients $\left\{ \lambda _{j}^{m}\right\} _{j=1}^{J}$
such that%
\begin{equation}
\left\vert \sum_{j=1}^{J}\lambda _{j}^{m}K_{j}^{\alpha }\left( x,x+t\mathbf{u%
}\right) \right\vert \geq ct^{\alpha -n},\ \ \ \ \ t\in \mathbb{R},
\label{Ktalpha strong}
\end{equation}%
holds for \emph{all} unit vectors $\mathbf{u}$ in the $n$-ant $V_{m}$ (i.e.
an $n$-dimensional quadrant) where%
\begin{equation*}
V_{m}=\left\{ x\in \mathbb{R}^{n}:m_{i}x_{i}>0\text{ for }1\leq i\leq
n\right\} ,\ \ \ \ \ m\in \left\{ 1,-1\right\} ^{n}.
\end{equation*}

We now define the positive gradient property. We say that a strongly
elliptic standard $\alpha $-fractional \textbf{convolution} singular
integral $\mathbf{T}^{\alpha }$ has the \emph{positive gradient property} if
in addition there is a finite sequence of closed sectors $\left\{
S_{j}\right\} _{j=1}^{M}$ such that:

\begin{enumerate}
\item we have $\mathbb{R}^{n}=\bigcup_{j=1}^{M}S_{j}$ and there is a
positive constant $\theta =\theta _{\mathbf{T}^{\alpha }}>0$ so that each
sector $S_{j}$ is a rotation $\Theta _{j}$ of the unit sector $S$ of
aperture $\theta $, where 
\begin{equation*}
S\equiv \left\{ y\in \mathbb{R}^{n}:y^{\prime }=\frac{y}{\left\vert
y\right\vert }\in B_{\mathbb{S}^{n-1}}\left( \mathbf{e}_{1},\theta \right)
\right\} .
\end{equation*}

\item for each sector $S_{j}=\Theta _{j}S$ there is a sequence of
coefficients $\left\{ \lambda _{j}^{m}\right\} _{j=1}^{J}$ such that the
scalar convolution kernel $K\left( \xi \right) \equiv \sum_{j=1}^{J}\lambda
_{j}^{m}K_{j}^{\alpha }\left( \Theta _{j}^{-1}\xi \right) $ satisfies 
\begin{equation*}
\frac{K\left( \xi \right) -K\left( \eta \right) }{\xi _{1}-\eta _{1}}\approx
\left\vert \xi \right\vert ^{\alpha -n-1},\ \ \ \ \ \text{for }\xi =\left(
\xi _{1},\widetilde{\xi }\right) ,\eta =\left( \eta _{1},\widetilde{\eta }%
\right) \in S\text{ with }\frac{\left\vert \widetilde{\xi }-\widetilde{\eta }%
\right\vert }{\left\vert \xi _{1}-\eta _{1}\right\vert }\leq \tan \theta .
\end{equation*}
\end{enumerate}

It is obvious that the vector Riesz transform $\mathbf{R}^{\alpha ,n}$ is a
strongly elliptic convolution singular integral, and we prove in Lemma \ref%
{Riesz pgp} below that its negative has the positive gradient property. The
rotation invariance of $\mathbf{R}^{\alpha ,n}$ makes each of these two
properties easy to establish.

\begin{remark}
In dimension $n=1$ the positive gradient property reduces to $\frac{d}{dx}%
K^{\alpha }\left( x\right) \approx -\frac{1}{x^{2-\alpha }}$, and in the
case of the kernel $K\left( x\right) =\frac{1}{x}$ for the Hilbert
transform, we actually have equality, $\frac{d}{dx}K\left( x\right) =-\frac{1%
}{x^{2}}$, a property exploited extensively in the proof of the NTV
conjecture in \cite{LaSaShUr3}, \cite{Lac}.
\end{remark}

\section{Poisson integrals and Muckenhoupt conditions}

Let $\mu $ be a locally finite positive Borel measure on $\mathbb{R}^{n}$,
and suppose $Q$ is a cube in $\mathbb{R}^{n}$. Recall that $\left\vert
Q\right\vert =\ell \left( Q\right) ^{n}$ where $\ell \left( Q\right) $ is
the side length of a cube $Q$. The two $\alpha $-fractional Poisson
integrals of $\mu $ on a cube $Q$ are given by the following expressions:%
\begin{eqnarray*}
\mathrm{P}^{\alpha }\left( Q,\mu \right) &\equiv &\int_{\mathbb{R}^{n}}\frac{%
\left\vert Q\right\vert ^{\frac{1}{n}}}{\left( \left\vert Q\right\vert ^{%
\frac{1}{n}}+\left\vert x-c_{Q}\right\vert \right) ^{n+1-\alpha }}d\mu
\left( x\right) , \\
\mathcal{P}^{\alpha }\left( Q,\mu \right) &\equiv &\int_{\mathbb{R}%
^{n}}\left( \frac{\left\vert Q\right\vert ^{\frac{1}{n}}}{\left( \left\vert
Q\right\vert ^{\frac{1}{n}}+\left\vert x-c_{Q}\right\vert \right) ^{2}}%
\right) ^{n-\alpha }d\mu \left( x\right) ,
\end{eqnarray*}%
where $\left\vert x-c_{Q}\right\vert $ denotes distance between $x$ and the
center $c_{Q}$ of $Q$, and $\left\vert Q\right\vert $ denotes the Lebesgue
measure of $Q$. We refer to $\mathrm{P}^{\alpha }$ as the \emph{standard}
Poisson integral and to $\mathcal{P}^{\alpha }$ as the \emph{reproducing}
Poisson integral. Note that for $n-1\leq \alpha <n$, these two kernels
satisfy 
\begin{equation*}
\mathrm{P}^{\alpha }\left( Q,\mu \right) \leq \mathcal{P}^{\alpha }\left(
Q,\mu \right) ,\ \ \ \ \text{for all intervals }Q\text{ and positive
measures }\mu ,
\end{equation*}%
and that the inequality is reversed for $0\leq \alpha \leq n-1$.

We now define the \emph{one-tailed} $\mathcal{A}_{2}^{\alpha }$ constant
using $\mathcal{P}^{\alpha }$. The energy constants $\mathcal{E}_{\alpha }$
introduced in the next section will use the standard Poisson integral $%
\mathrm{P}^{\alpha }$. We denote the collection of cubes in $\mathbb{R}^{n}$
with edges parallel to the coordinate axes by $\mathcal{P}^{n}$ (not to be
confused with the Poisson integral $\mathcal{P}^{\alpha }$).

\begin{definition}
The one-sided constants $\mathcal{A}_{2}^{\alpha }$ and $\mathcal{A}%
_{2}^{\alpha ,\ast }$ for the weight pair $\left( \sigma ,\omega \right) $
are given by%
\begin{eqnarray}
\mathcal{A}_{2}^{\alpha } &\equiv &\sup_{Q\in \mathcal{P}^{n}}\mathcal{P}%
^{\alpha }\left( Q,\mathbf{1}_{Q^{c}}\sigma \right) \frac{\left\vert
Q\right\vert _{\omega }}{\left\vert Q\right\vert ^{1-\frac{\alpha }{n}}}%
<\infty ,  \label{def A2} \\
\mathcal{A}_{2}^{\alpha ,\ast } &\equiv &\sup_{Q\in \mathcal{P}^{n}}\mathcal{%
P}^{\alpha }\left( Q,\mathbf{1}_{Q^{c}}\omega \right) \frac{\left\vert
Q\right\vert _{\sigma }}{\left\vert Q\right\vert ^{1-\frac{\alpha }{n}}}%
<\infty .  \notag
\end{eqnarray}
\end{definition}

Note that these definitions are the analogues of the corresponding
conditions with `holes' introduced by Hyt\"{o}nen \cite{Hyt2} in dimension $%
n=1$ - the supports of the measures $\mathbf{1}_{Q^{c}}\sigma $ and $\mathbf{%
1}_{Q}\omega $ in the definition of $\mathcal{A}_{2}^{\alpha }$ are
disjoint, and so the common point masses of $\sigma $ and $\omega $ do not
appear simultaneously in each factor.

\section{Strong, deep and bounded overlap energy constants}

We begin with the \emph{strong} energy constants (see e.g. \cite{LaSaUr2}
and \cite{SaShUr7}).

\begin{definition}
\label{def strong quasienergy}Let $0\leq \alpha <n$. Suppose $\sigma $ and $%
\omega $ are locally finite positive Borel measures on $\mathbb{R}^{n}$.
Then the \emph{strong} energy constant $\mathcal{E}_{2}^{\alpha }$ is
defined by 
\begin{equation}
\left( \mathcal{E}_{2}^{\alpha }\right) ^{2}\equiv \sup_{I=\dot{\cup}I_{r}}%
\frac{1}{\left\vert I\right\vert _{\sigma }}\sum_{r=1}^{\infty }\left( \frac{%
\mathrm{P}^{\alpha }\left( I_{r},\mathbf{1}_{I}\sigma \right) }{\left\vert
I_{r}\right\vert }\right) ^{2}\left\Vert x-m_{I_{r}}^{\omega }\right\Vert
_{L^{2}\left( \mathbf{1}_{I_{r}}\omega \right) }^{2}\ ,
\label{strong b* energy}
\end{equation}%
where the supremum is taken over arbitrary decompositions of a cube $I$
using a pairwise disjoint union of subcubes $I_{r}$. Similarly, we define
the dual \emph{strong} energy constant $\mathcal{E}_{2}^{\alpha ,\ast }$ by
switching the roles of $\sigma $ and $\omega $:%
\begin{equation}
\left( \mathcal{E}_{2}^{\alpha ,\ast }\right) ^{2}\equiv \sup_{I=\dot{\cup}%
I_{r}}\frac{1}{\left\vert I\right\vert _{\omega }}\sum_{r=1}^{\infty }\left( 
\frac{\mathrm{P}^{\alpha }\left( I_{r},\mathbf{1}_{I}\omega \right) }{%
\left\vert I_{r}\right\vert }\right) ^{2}\left\Vert x-m_{I_{r}}^{\sigma
}\right\Vert _{L^{2}\left( \mathbf{1}_{I_{r}}\sigma \right) }^{2}\ .
\label{strong b energy}
\end{equation}
\end{definition}

In order to define the weaker notions of deep and bounded overlap energy
constants, we must introduce additional notation. We say that a dyadic cube $%
J$ is $\left( \mathbf{r},\varepsilon \right) $-\emph{deeply embedded} in a
(not necessarily dyadic) quasicube $K$, which we write as $J\Subset _{%
\mathbf{r},\varepsilon }K$, when $J\subset K$ and both 
\begin{eqnarray}
\ell \left( J\right) &\leq &2^{-\mathbf{r}}\ell \left( K\right) ,
\label{def deep embed} \\
\limfunc{dist}\left( J,\partial K\right) &\geq &\frac{1}{2}\ell \left(
J\right) ^{\varepsilon }\ell \left( K\right) ^{1-\varepsilon },  \notag
\end{eqnarray}%
Recall the collection 
\begin{equation*}
\mathcal{M}_{\left( \mathbf{r},\varepsilon \right) -\limfunc{deep}}\left(
K\right) \equiv \left\{ \text{maximal }J\Subset _{\mathbf{r},\varepsilon
}K\right\}
\end{equation*}%
of \emph{maximal} $\left( \mathbf{r},\varepsilon \right) $-deeply embedded
dyadic subcubes of a cube $K$ (a subcube $J$ of $K$ is a \emph{dyadic}
subcube of $K$ if $J\in \mathcal{D}$ when $\mathcal{D}$ is a dyadic grid
containing $K$). This collection of dyadic subcubes of $K$ is of course a
pairwise disjoint decomposition of $K$. Recall also the refinement and
extension of the collection $\mathcal{M}_{\left( \mathbf{r},\varepsilon
\right) -\limfunc{deep}}\left( K\right) $ given in \cite{SaShUr7} for
certain $K$ and each $\ell \geq 1$ (where $\pi ^{\ell }K^{\prime }$ denotes
the $\ell ^{th}$ ancestor of $K^{\prime }$ in the grid): 
\begin{eqnarray*}
\mathcal{M}_{\left( \mathbf{r},\varepsilon \right) -\limfunc{deep}}^{\ell
}\left( K\right) &\equiv &\left\{ J\in \mathcal{M}_{\left( \mathbf{r}%
,\varepsilon \right) -\limfunc{deep}}\left( \pi ^{\ell }K^{\prime }\right) 
\text{ for some }K^{\prime }\in \mathfrak{C}_{\mathcal{D}}\left( K\right)
:\right. \\
&&\ \ \ \ \ \ \ \ \ \ \ \ \ \ \ \ \ \ \ \ \ \ \ \ \ \ \ \ \ \ \left.
J\subset L\text{ for some }L\in \mathcal{M}_{\left( \mathbf{r},\varepsilon
\right) -\limfunc{deep}}\left( K\right) \right\} ,
\end{eqnarray*}%
where $\mathfrak{C}_{\mathcal{D}}\left( K\right) $ is the set of $\mathcal{D}
$-dyadic children of $K$. Thus $\mathcal{M}_{\left( \mathbf{r},\varepsilon
\right) -\limfunc{deep}}^{\ell }\left( K\right) $ is the union, over all
children $K^{\prime }$ of $K$, of those cubes in $\mathcal{M}_{\left( 
\mathbf{r},\varepsilon \right) -\limfunc{deep}}\left( \pi ^{\ell }K^{\prime
}\right) $ that happen to be contained in some $L\in \mathcal{M}_{\left( 
\mathbf{r},\varepsilon \right) -\limfunc{deep}}\left( K\right) $. These
collections of cubes satisfy the bounded overlap property (see e.g. \cite%
{SaShUr7}), 
\begin{equation*}
\sum_{J\in \mathcal{M}_{\left( \mathbf{r},\varepsilon \right) -\limfunc{deep}%
}^{\ell }\left( K\right) }\mathbf{1}_{\gamma J}\leq \beta \mathbf{1}_{K},\ \
\ \ \ \text{for each }\ell \geq 1.
\end{equation*}

Finally, let $\mathsf{P}_{M}^{\omega }\equiv \sum_{J\in \mathcal{D}:\
J\subset M}\bigtriangleup _{J}^{\omega }$ be Haar projection onto the
subspace of $L^{2}\left( \omega \right) $ consisting of those functions $%
f\in L^{2}\left( \omega \right) $ supported in $M$ with $\int_{M}fd\omega =0$
- see e.g. \cite{SaShUr7} for more detail on Haar expansions in $L^{2}\left(
\omega \right) $.

\begin{definition}
\label{energy condition}Suppose $\sigma $ and $\omega $ are locally finite
positive Borel measures on $\mathbb{R}^{n}$ and fix $\gamma >1$. Then the\
deep energy condition constant $\mathcal{E}_{\alpha }^{\limfunc{deep}}$ is
given by%
\begin{equation*}
\left( \mathcal{E}_{2}^{\alpha ,\limfunc{deep}}\right) ^{2}\equiv \sup_{\ell
\geq 1}\sup_{\mathcal{D}}\sup_{I=\dot{\cup}I_{r}}\frac{1}{\left\vert
I\right\vert _{\sigma }}\sum_{r=1}^{\infty }\sum_{M\in \mathcal{M}_{\left( 
\mathbf{r},\varepsilon \right) -\limfunc{deep}}^{\ell }\left( I_{r}\right)
}\left( \frac{\mathrm{P}^{\alpha }\left( M,\mathbf{1}_{I\setminus \gamma
M}\sigma \right) }{\left\vert M\right\vert }\right) ^{2}\left\Vert \mathsf{P}%
_{M}^{\omega }x\right\Vert _{L^{2}\left( \omega \right) }^{2},
\end{equation*}%
where $\sup_{\mathcal{D}}\sup_{I=\dot{\cup}I_{r}}$ is taken over

\begin{enumerate}
\item all dyadic grids $\mathcal{D}$,

\item all $\mathcal{D}$-dyadic cubes $I$,

\item and all subpartitions $\left\{ I_{r}\right\} _{r=1}^{N\text{ or }%
\infty }$ of the cube $I$ into $\mathcal{D}$-dyadic subcubes $I_{r}$.
\end{enumerate}
\end{definition}

The exact value of $\gamma >1$ above is not too important in general, but
when we wish to emphasize the value of $\gamma $, we will refer to $\mathcal{%
E}_{\alpha }^{\limfunc{deep}}$ as the $\gamma $-deep energy condition
constant.

Note that we could also define a slightly less restrictive notion of energy
condition as in \cite{LaWi} by taking the supremum over $I=\dot{\cup}I_{r}$
for which there is bounded overlap of the expansions $\gamma I_{r}$,%
\begin{equation}
\left( \mathcal{E}_{2}^{\alpha ,\limfunc{overlap}}\right) ^{2}\equiv \sup_{%
\mathcal{D}}\sup_{\substack{ I=\dot{\cup}I_{r}  \\ \sum_{r=1}^{\infty }%
\mathbf{1}_{\gamma I_{r}}\leq \beta \mathbf{1}_{I}}}\frac{1}{\left\vert
I\right\vert _{\sigma }}\sum_{r=1}^{\infty }\left( \frac{\mathrm{P}^{\alpha
}\left( I_{r},\mathbf{1}_{I\setminus \gamma I_{r}}\sigma \right) }{%
\left\vert I_{r}\right\vert ^{\frac{1}{n}}}\right) ^{2}\left\Vert \mathsf{P}%
_{I_{r}}^{\omega }x\right\Vert _{L^{2}\left( \omega \right) }^{2}\ ,
\label{deep overlap energy}
\end{equation}%
and we refer to finiteness of $\mathcal{E}_{2}^{\alpha ,\limfunc{overlap}}$
as the \emph{bounded overlap} energy condition, or more precisely as the $%
\gamma $-overlap energy condition when we want to emphasize the choice of $%
\gamma $.

Later, in Part 2 of the paper, we will have reason to consider the
corresponding forward (bounded overlap) \emph{pivotal} constant, which is
defined by replacing $\left\Vert \mathsf{P}_{I_{r}}^{\omega }x\right\Vert
_{L^{2}\left( \omega \right) }^{2}$ with its upper bound $\left\vert
I_{r}\right\vert ^{\frac{2}{n}}\left\vert I_{r}\right\vert _{\omega }$ in (%
\ref{deep overlap energy}):%
\begin{equation}
\left( \mathcal{V}_{2}^{\alpha ,\limfunc{overlap}}\right) ^{2}\equiv \sup_{%
\mathcal{D}}\sup_{\substack{ I=\dot{\cup}I_{r} \\ \sum_{r=1}^{\infty }%
\mathbf{1}_{\gamma I_{r}}\leq \beta \mathbf{1}_{I}}}\frac{1}{\left\vert
I\right\vert _{\sigma }}\sum_{r=1}^{\infty }\mathrm{P}^{\alpha }\left( I_{r},%
\mathbf{1}_{I\setminus \gamma I_{r}}\sigma \right) ^{2}\left\vert
I_{r}\right\vert _{\omega }\ .  \label{deep overlap pivotal}
\end{equation}%
The backward \emph{pivotal} constant $\mathcal{V}_{2}^{\alpha ,\limfunc{%
overlap},\ast }$ is defined by interchanging the roles of the measures $%
\sigma $ and $\omega $.

\section{Twisted localizations and necessity of the deep energy condition}

Let $\left\{ \Theta _{j}\right\} _{j=1}^{M}$ be a finite set of rotations
such that $\mathbb{R}^{n}=\overset{\cdot }{\bigcup }_{j=1}^{M}\Theta _{j}%
\widehat{Q}$ where $\widehat{Q}$ is the sector centered on the positive $%
x_{1}$-axis with aperture angle $\theta >0$ as in the positive gradient
property for the strongly elliptic convolution singular integral $\mathbf{T}%
^{\alpha }$ in $\mathbb{R}^{n}$. Our goal here is to prove the (forward) $%
\gamma $-overlap energy condition with constant 
\begin{equation*}
\mathcal{E}_{2}^{\alpha ,\limfunc{overlap}}\lesssim \sup_{\mathcal{J}%
}\sup_{1\leq i,j\leq N}\mathfrak{T}_{\Theta _{i}\mathbf{T}_{\mathcal{J}%
}^{\alpha }\Theta _{j}}+\mathcal{A}_{2}^{\alpha },
\end{equation*}%
where $\left\{ \Theta _{i}\mathbf{T}_{\mathcal{J}}^{\alpha }\Theta
_{j}\right\} $ is a family of standard fractional singular integrals
associated with $\mathbf{T}^{\alpha }$. More precisely we will show%
\begin{equation*}
\sum_{r=1}^{\infty }\left( \frac{\mathrm{P}^{\alpha }\left( J,\mathbf{1}%
_{I\setminus \gamma I_{r}}\sigma \right) }{\left\vert I_{r}\right\vert ^{%
\frac{1}{n}}}\right) ^{2}\left\Vert \mathsf{P}_{I_{r}}^{\omega }\mathbf{x}%
\right\Vert _{L^{2}\left( \omega \right) }^{2}\leq \left\{ \sup_{\mathcal{J}%
}\sup_{1\leq i,j\leq N}\left( \mathfrak{T}_{\Theta _{i}\mathbf{T}%
_{J}^{\alpha }\Theta _{j}}\right) ^{2}+\beta \mathcal{A}_{2}^{\alpha
}\right\} \ \left\vert I\right\vert _{\sigma }\ ,
\end{equation*}%
for all partitions of a dyadic cube $I=\overset{\cdot }{\dbigcup\limits_{r%
\geq 1}}I_{r}$ into subcubes $I_{r}$ with $\sum_{r=1}^{\infty }\mathbf{1}%
_{\gamma I_{r}}\lesssim \beta \mathbf{1}_{I}$. We now turn to defining the
twisted localizations $\Theta _{i}\mathbf{T}_{J}^{\alpha }\Theta _{j}$
appearing on the right hand side of the inequality displayed above.

Let $Q\equiv \left[ -\frac{1}{2},\frac{1}{2}\right] ^{n}$ be the \emph{unit
cube} of side length $1$ centered at the origin, and let 
\begin{eqnarray*}
\widehat{Q} &\equiv &\left\{ y=\left( y_{1},y^{\prime }\right) \in \left( 
\mathbb{R}\times \mathbb{R}^{n-1}\right) \setminus \gamma Q:\left\vert
y^{\prime }\right\vert \leq \lambda \left\vert y_{1}\right\vert \right\} \\
&=&\left\{ y\in \mathbb{R}^{n}\setminus \gamma Q:y^{\prime }=\frac{y}{%
\left\vert y\right\vert }\in B_{\mathbb{S}^{n-1}}\left( \mathbf{e}%
_{1},\theta \right) \right\}
\end{eqnarray*}%
be the \emph{unit truncated sector} of separation $\gamma $ and aperture $%
\theta =\arctan \lambda $ for $\gamma >1$ and $\lambda >0$ chosen as needed
below. Let $\varphi $ be a smooth bump function that equals $1$ on $\widehat{%
Q}$ and vanishes off an appropriate $\rho $-expansion%
\begin{equation*}
\widehat{Q}^{\ast }=\left\{ y\in \mathbb{R}^{n}\setminus \frac{\gamma }{\rho 
}Q:y^{\prime }=\frac{y}{\left\vert y\right\vert }\in \rho B_{\mathbb{S}%
^{n-1}}\left( \mathbf{e}_{1},\theta \right) \right\} ,
\end{equation*}%
where $1<\rho <\gamma $, and such that%
\begin{equation*}
\left\vert \nabla \varphi \left( y\right) \right\vert \leq 1\text{ and }%
\left\vert \nabla \varphi \left( y\right) \right\vert \leq C_{\varphi
}\left\vert y\right\vert ^{-1},\ \ \ \ \ y\in \mathbb{R}^{n}.
\end{equation*}%
In particular, we can choose the bump function $\varphi $ so that the
localized kernel $\mathbf{1}_{J}\left( x\right) \mathbf{K}^{\alpha }\left(
x,y\right) \varphi \left( y\right) $ satisfies a \emph{one-sided} Calder\'{o}%
n-Zygmund condition, in which there is smoothness only in the $y$-variable.
See below.

We also define such a bump function for each `rotated' sector%
\begin{equation*}
\widehat{Q^{j}}\equiv \left\{ y\in \mathbb{R}^{n}\setminus \gamma
Q:y^{\prime }=\frac{y}{\left\vert y\right\vert }\in B_{\mathbb{S}%
^{n-1}}\left( \Theta _{j}\mathbf{e}_{1},\theta \right) \right\} ,
\end{equation*}%
which with a small abuse of notation we denote by $\Theta _{j}\widehat{Q}$,
despite the fact that $\widehat{Q^{j}}$ is not exactly a rotation of $%
\widehat{Q}$. But since the cube $Q$ is not rotation invariant, we cannot
simply take a rotation of $\varphi $. Thus for each $1\leq j\leq M$, we
choose a bump function $\varphi ^{j}$ that is equals $1$ on the sector $%
\Theta _{j}\widehat{Q}=\widehat{Q^{j}}$ and is supported in the $\rho $%
-expansion of the sector,%
\begin{equation*}
\left\{ y\in \mathbb{R}^{n}\setminus \frac{\gamma }{\rho }Q:y^{\prime }=%
\frac{y}{\left\vert y\right\vert }\in \rho B_{\mathbb{S}^{n-1}}\left( \Theta
_{j}\mathbf{e}_{1},\theta \right) \right\} ,
\end{equation*}%
and satisfies appropriate estimates. To avoid clutter of notation. we will
typically suppress the superscript $j$ and simply write $\varphi $ for each
of these bump functions $\varphi _{1},...,\varphi _{M}$.

\begin{lemma}
With notation as above,%
\begin{eqnarray}
\left\vert \mathbf{1}_{Q}\left( x\right) \mathbf{K}^{\alpha }\left(
x,y\right) \varphi \left( y\right) \right\vert &\leq &\left\Vert \mathbf{K}%
^{\alpha }\right\Vert _{CZ_{\alpha }}\left\vert x-y\right\vert ^{\alpha -n},
\label{CZ one-sided} \\
\left\vert \nabla _{y}^{1}\mathbf{1}_{Q}\left( x\right) \mathbf{K}^{\alpha
}\left( x,y\right) \varphi \left( y\right) \right\vert &\lesssim &\left\Vert 
\mathbf{K}^{\alpha }\right\Vert _{CZ_{\alpha }}\left\vert x-y\right\vert
^{\alpha -n-1},  \notag \\
\left\vert \nabla _{y}^{2}\mathbf{1}_{Q}\left( x\right) \mathbf{K}^{\alpha
}\left( x,y\right) \varphi \left( y\right) \right\vert &\lesssim &\left\Vert 
\mathbf{K}^{\alpha }\right\Vert _{CZ_{\alpha }}\left\vert x-y\right\vert
^{\alpha -n-2}.  \notag
\end{eqnarray}
\end{lemma}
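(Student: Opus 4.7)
The plan is to apply the Leibniz rule in the $y$-variable to the product $\mathbf{K}^{\alpha}(x,y)\varphi(y)$, then bound each resulting term using the CZ size/smoothness of $\mathbf{K}^{\alpha}$, the scale-invariant bounds on $\varphi$, and a geometric comparison $|x-y|\approx |y|$ valid on the support of $\varphi$ when $x\in Q$. The indicator $\mathbf{1}_{Q}(x)$ is inert under $\nabla_{y}$; its only role is to confine $x$ to the unit cube so that the comparison below is available.

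For the geometric input, observe that since $y\in \mathrm{supp}(\varphi)\subset \widehat{Q}^{\ast}\subset \mathbb{R}^{n}\setminus (\gamma/\rho)Q$, we have $|y|\geq \gamma/(2\rho)$; combined with $|x|\leq \sqrt{n}/2$ for $x\in Q$, and taking $\gamma/\rho$ large enough depending only on $n$, this yields $|y|\geq 2|x|$ and hence
$$\tfrac{1}{2}|y|\leq |x-y|\leq \tfrac{3}{2}|y|,\qquad x\in Q,\ y\in \mathrm{supp}(\varphi).$$
This is the one geometric fact that converts $|y|^{-\ell}$ bounds on $\nabla^{\ell}\varphi$ into $|x-y|^{-\ell}$ bounds compatible with the CZ scaling.

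With this in hand the three estimates are essentially bookkeeping. The size bound is immediate from $|\varphi|\leq 1$ and the CZ size estimate. For the first derivative, Leibniz together with the CZ gradient bound and $|\nabla \varphi|\lesssim |y|^{-1}\approx |x-y|^{-1}$ give
$$|\nabla_{y}(\mathbf{K}^{\alpha}\varphi)|\leq |\nabla_{y}\mathbf{K}^{\alpha}||\varphi|+|\mathbf{K}^{\alpha}||\nabla \varphi|\lesssim \|\mathbf{K}^{\alpha}\|_{CZ_{\alpha}}|x-y|^{\alpha-n-1}.$$
For the second derivative, Leibniz produces three groups $(\nabla_{y}^{2}\mathbf{K}^{\alpha})\varphi$, $(\nabla_{y}\mathbf{K}^{\alpha})\otimes(\nabla \varphi)$, and $\mathbf{K}^{\alpha}(\nabla^{2}\varphi)$; the last two are handled identically to the first-derivative case using the additional bound $|\nabla^{2}\varphi|\lesssim |y|^{-2}\approx |x-y|^{-2}$, which one arranges in the construction of $\varphi$.

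The main obstacle is the remaining summand $(\nabla_{y}^{2}\mathbf{K}^{\alpha})\varphi$, which demands a pointwise Hessian bound $|\nabla_{y}^{2}\mathbf{K}^{\alpha}(x,y)|\lesssim \|\mathbf{K}^{\alpha}\|_{CZ_{\alpha}}|x-y|^{\alpha-n-2}$ that is not literally part of the stated $C^{1+\delta}$ kernel hypothesis. In the setting actually used, however, $\mathbf{T}^{\alpha}$ is a strongly elliptic convolution singular integral with the positive gradient property, and in particular the Riesz transform $\mathbf{R}^{\alpha,n}$; all such kernels are smooth away from the diagonal with the classical homogeneity-type bound $|\partial^{\beta}K(z)|\lesssim |z|^{\alpha-n-|\beta|}$ for every multi-index $\beta$, and the Hessian estimate then holds with constant absorbed into $\|\mathbf{K}^{\alpha}\|_{CZ_{\alpha}}$. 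This closes the argument.
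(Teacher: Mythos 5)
Your proof follows the same route as the paper: the size bound is immediate from $|\varphi|\le 1$; the geometric comparability $|x-y|\approx |y|$ on the support of $\mathbf{1}_{Q}(x)\varphi(y)$ converts the $|y|^{-\ell}$ decay of $\nabla^{\ell}\varphi$ into the right CZ scaling; and then Leibniz gives the derivative bounds. That is exactly what the paper does.

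However, you have spotted something the paper glosses over with a terse ``Similarly.'' The $\delta$-standard hypothesis (\ref{size}) supplies only the size bound, the first-gradient bound, and $\delta$-H\"{o}lder continuity of $\nabla\mathbf{K}^{\alpha}$ (in both variables by the adjoint clause). None of this yields the pointwise Hessian bound $|\nabla_{y}^{2}\mathbf{K}^{\alpha}(x,y)|\lesssim |x-y|^{\alpha-n-2}$ needed for the leading term $(\nabla_{y}^{2}\mathbf{K}^{\alpha})\varphi$ in the second-derivative Leibniz expansion: for $\delta<1$ the gradient need not even be differentiable, and for $\delta=1$ one gets only an a.e.\ weak bound, not the claimed pointwise estimate. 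Your resolution --- that the operators actually used (strongly elliptic convolution kernels satisfying the positive gradient property, in particular $\mathbf{R}^{\alpha,n}$) are smooth off the diagonal with the classical $|\partial^{\beta}K(z)|\lesssim |z|^{\alpha-n-|\beta|}$ homogeneity bounds --- is the correct fix and is implicit in the paper's later restriction to such operators, but it is a genuine strengthening of the hypothesis over what the lemma literally assumes. This is a worthwhile observation; the lemma as stated should either hypothesize a second-order size condition on the kernel or be restricted (as you do) to the class of kernels for which it is being invoked. Your geometric constants $\tfrac12|y|\le |x-y|\le\tfrac32|y|$ are also the correct form; the paper's printed lower-bound constant $3\sqrt{n}$ is a slip (it must be strictly less than $1$).
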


\begin{proof}
We trivially have the first line in (\ref{CZ one-sided}),%
\begin{equation*}
\left\vert \mathbf{1}_{Q}\left( x\right) \mathbf{K}^{\alpha }\left(
x,y\right) \varphi \left( y\right) \right\vert \leq \left\vert \mathbf{K}%
^{\alpha }\left( x,y\right) \right\vert \leq \left\Vert \mathbf{K}^{\alpha
}\right\Vert _{CZ_{\alpha }}\left\vert x-y\right\vert ^{\alpha -n}.
\end{equation*}%
If in addition $\mathbf{1}_{Q}\left( x\right) \varphi \left( y\right) \neq 0$%
, then 
\begin{equation*}
3\sqrt{n}\left\vert y\right\vert \leq \left\vert x-y\right\vert \leq 5\sqrt{n%
}\left\vert y\right\vert ,
\end{equation*}%
and so 
\begin{eqnarray*}
\left\vert \nabla _{y}\mathbf{1}_{Q}\left( x\right) \mathbf{K}^{\alpha
}\left( x,y\right) \varphi \left( y\right) \right\vert &\leq &\left\vert
\nabla _{y}\mathbf{K}^{\alpha }\left( x,y\right) \right\vert \ \left\vert
\varphi \left( y\right) \right\vert +\left\vert \mathbf{K}^{\alpha }\left(
x,y\right) \right\vert \ \left\vert \nabla _{y}\varphi \left( y\right)
\right\vert \\
&\leq &\left\Vert \mathbf{K}^{\alpha }\right\Vert _{CZ_{\alpha }}\left\vert
x-y\right\vert ^{\alpha -n-1}+\left\Vert \mathbf{K}^{\alpha }\right\Vert
_{CZ_{\alpha }}\left\vert x-y\right\vert ^{\alpha -n}\left\vert \nabla
\varphi \left( y\right) \right\vert \\
&\leq &\left\Vert \mathbf{K}^{\alpha }\right\Vert _{CZ_{\alpha }}\left(
1+\left( 5\sqrt{n}\right) ^{n-\alpha }C_{\varphi }\right) \left\vert
x-y\right\vert ^{\alpha -n-1}\lesssim \left\Vert \mathbf{K}^{\alpha
}\right\Vert _{CZ_{\alpha }}\left\vert x-y\right\vert ^{\alpha -n-1},
\end{eqnarray*}%
Similarly $\left\vert \nabla _{y}^{2}\mathbf{1}_{Q}\left( x\right) \mathbf{K}%
^{\alpha }\left( x,y\right) \varphi \left( y\right) \right\vert \lesssim
\left\Vert \mathbf{K}^{\alpha }\right\Vert _{CZ_{\alpha }}\left\vert
x-y\right\vert ^{\alpha -n-2}$.
\end{proof}

Thus the localized kernel $\mathbf{1}_{Q}\left( x\right) \mathbf{K}^{\alpha
}\left( x,y\right) \varphi \left( y\right) $ satisfies Calder\'{o}n-Zygmund
smoothness in the $y$-variable, but it fails to satisfy Calder\'{o}n-Zygmund
smoothness in the $x$-variable. This unfortunate omission diminishes the
significance of the derivation of energy from localized families, but does
help somewhat to narrow the focus on difficulties in obtaining necessity of
energy from boundedness of families of operators.

\subsection{Family of localizations of an operator}

For any $\alpha $-fractional singular integral operator $\mathbf{T}^{\alpha
} $ with kernel $\mathbf{K}^{\alpha }\left( x,y\right) $, and any cube $J$
with center $c_{J}$ and side length $\ell \left( J\right) $, we consider the
vector operator $\mathbf{T}_{J}^{\alpha }$ with kernel%
\begin{eqnarray*}
\mathbf{K}_{J}^{\alpha }\left( x,y\right) &\equiv &\mathbf{1}_{J}\left(
x\right) \ \mathbf{K}^{\alpha }\left( x,y\right) \ \varphi _{J}\left(
y\right) ; \\
\varphi _{J}\left( y\right) &=&\varphi \left( \frac{y-c_{J}}{\ell \left(
J\right) }\right) ,
\end{eqnarray*}%
which we refer to as a localization of $\mathbf{T}^{\alpha }$ to the cube $J$
and sector $\widehat{J}$, where $\widehat{J}=\delta _{\ell \left( J\right) }%
\widehat{Q}+c_{J}$ is the dilate by $\ell \left( J\right) $ and translate by 
$c_{J}$ of the unit sector $\widehat{Q}$ with aperture $\theta $ defined
above.

Now we define the operator $\mathbf{T}_{J}^{\alpha }\Theta _{j}^{-1}$ with
kernel%
\begin{equation*}
\mathbf{K}_{J}^{\alpha }\Theta _{j}^{-1}\left( x,y\right) \equiv \mathbf{1}%
_{J}\left( x\right) \ \mathbf{K}^{\alpha }\left( x,y\right) \ \varphi
_{J}\left( \left( \Theta _{j}^{J}\right) ^{-1}y\right) ,
\end{equation*}%
but where we must of course use $\varphi _{J}^{j}$ in place of $\varphi _{J}$
for each $1\leq j\leq M$, since cubes are not invariant under rotations. As
mentioned earlier, we will typically suppress the superscript $j$ here. This
operator $\mathbf{T}_{J}^{\alpha }\Theta _{j}^{-1}$ is referred to as a
localization of $\mathbf{T}^{\alpha }$ to the cube $J$ and sector $\widehat{%
\Theta _{j}^{J}J}$, where $\widehat{\Theta _{j}^{J}J}=\delta _{\ell \left(
J\right) }\widehat{\Theta _{j}Q}+c_{J}$ is the dilate by $\ell \left(
J\right) $ and translate by $c_{J}$ of the `rotation' $\widehat{\Theta _{j}Q}
$ of the unit sector $\widehat{Q}$ with aperture $\theta $ (we say
`rotation' despite the fact that this is only approximately true).

Now let $\mathcal{J}=\left\{ J_{k}\right\} _{k=1}^{\infty }$ be a sequence
of pairwise disjoint subcubes of a cube $I$ satisfying the bounded overlap
condition,%
\begin{equation}
\sum_{r=1}^{\infty }\mathbf{1}_{\gamma I_{r}}\lesssim \beta \mathbf{1}_{I}\ ,
\label{overlap}
\end{equation}%
and define the vector operators%
\begin{eqnarray*}
\mathbf{T}_{\mathcal{J}}^{\alpha } &=&\sum_{k=1}^{\infty }\mathbf{T}%
_{J_{k}}^{\alpha }, \\
\mathbf{T}_{\mathcal{J}}^{\alpha }\Theta _{j}^{-1} &=&\sum_{k=1}^{\infty }%
\mathbf{T}_{J_{k}}^{\alpha }\Theta _{j}^{-1},
\end{eqnarray*}%
to have kernels%
\begin{eqnarray}
\mathbf{K}_{\mathcal{J}}^{\alpha }\left( x,y\right) &=&\sum_{k=1}^{\infty }%
\mathbf{1}_{J_{k}}\left( x\right) \ \mathbf{K}^{\alpha }\left( x,y\right) \
\varphi _{J_{k}}\left( y\right) ,  \label{kernels} \\
\mathbf{K}_{\mathcal{J}}^{\alpha }\Theta _{j}^{-1}\left( x,y\right)
&=&\sum_{k=1}^{\infty }\mathbf{1}_{J_{k}}\left( x\right) \ \mathbf{K}%
^{\alpha }\left( x,y\right) \ \varphi _{J_{k}}\left( \left( \Theta
_{j}^{J_{k}}\right) ^{-1}y\right) .  \notag
\end{eqnarray}%
respectively. Here, for any cube $J$, 
\begin{equation*}
\Theta _{i}^{J}\left( x\right) \equiv \Theta _{i}\left( x-c_{J}\right)
+c_{J},\ \ \ \ \ x\in \mathbb{R}^{n},
\end{equation*}%
is the conjugation by translation by $c_{J}$ of the rotation $\Theta _{i}$,
resulting in a rotation about the point $c_{J}$. We refer to the operator $%
\mathbf{T}_{\mathcal{J}}^{\alpha }\Theta _{j}^{-1}$ as the localization
rotated by $\Theta _{j}$ of $\mathbf{T}^{\alpha }$ to the collection $%
\mathcal{J}$. Denote by $\mathfrak{J}$, the infinite family of such
collections of cubes, namely those collections $\mathcal{J}$ of pairwise
disjoint subcubes of a cube $I$, whose expansions have bounded overlap (\ref%
{overlap}). The corresponding infinite family of operators $\left\{ \mathbf{T%
}_{\mathcal{J}}^{\alpha }\Theta _{j}^{-1}\right\} _{\mathcal{J}\in \mathfrak{%
J}\text{ and }1\leq j\leq M}$, taken over all cubes $I$ and decompositions $%
\mathcal{J}$ satisfying (\ref{overlap}) and all $1\leq j\leq M$, is called
the \emph{family of localizations} of the operator $\mathbf{T}^{\alpha }$.
The kernels $\left\{ \mathbf{K}_{\mathcal{J}}^{\alpha }\Theta
_{j}^{-1}\right\} _{\mathcal{J}\in \mathfrak{J}\text{ and }1\leq j\leq M}$
uniformly satisfy a \emph{one-sided} Calder\'{o}n-Zygmund condition (in the $%
y$-variable only).

\begin{lemma}
\label{CZalpha}Let $\mathbf{K}_{\mathcal{J}}^{\alpha }\Theta _{j}^{-1}$ be
as in the second line of (\ref{kernels}). Then for all $\mathcal{J}\in 
\mathfrak{J}$ and $1\leq j\leq M$ we have 
\begin{eqnarray}
\left\vert \mathbf{K}_{\mathcal{J}}^{\alpha }\Theta _{j}^{-1}\left(
x,y\right) \right\vert &\lesssim &\left\Vert \mathbf{K}^{\alpha }\right\Vert
_{CZ_{\alpha }}\left\vert x-y\right\vert ^{\alpha -n},  \label{CZ est} \\
\left\vert \nabla _{y}^{\ell }\mathbf{K}_{\mathcal{J}}^{\alpha }\Theta
_{j}^{-1}\left( x,y\right) \right\vert &\lesssim &\left\Vert \mathbf{K}%
^{\alpha }\right\Vert _{CZ_{\alpha }}\left\vert x-y\right\vert ^{\alpha
-n-\ell },\ \ \ \ \ \ell =1,2.  \notag
\end{eqnarray}
\end{lemma}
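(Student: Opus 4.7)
The plan is to reduce the estimates for $\mathbf{K}_{\mathcal{J}}^{\alpha}\Theta_j^{-1}$ to the single-cube estimates already established in the preceding lemma, exploiting the pairwise disjointness of the cubes in $\mathcal{J}$ together with the translation/dilation/rotation-covariance of the geometric picture on which the previous proof rested.

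First, for any fixed $x \in \mathbb{R}^n$, at most one of the indicators $\mathbf{1}_{J_k}(x)$ is nonzero, since $\mathcal{J} = \{J_k\}$ is a pairwise disjoint collection. Hence
\begin{equation*}
\mathbf{K}_{\mathcal{J}}^{\alpha}\Theta_j^{-1}(x,y) \;=\; \mathbf{1}_{J_{k(x)}}(x)\,\mathbf{K}^{\alpha}(x,y)\,\varphi_{J_{k(x)}}\!\bigl((\Theta_j^{J_{k(x)}})^{-1}y\bigr),
\end{equation*}
whenever this is nonzero, with a single index $k(x)$. It therefore suffices to prove the size and $y$-derivative estimates uniformly for one single localization $\mathbf{K}_{J}^{\alpha}\Theta_j^{-1}(x,y)$ with $J = J_{k(x)}$, and then the bound for the sum follows with the same implicit constant.

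Next, I would reduce the single-cube estimate to the unit-cube estimate already done. The bump function $\varphi_{J}^{j}(y) = \varphi^{j}\bigl((y-c_{J})/\ell(J)\bigr)$ (with the small abuse of notation $\Theta_j^{-1}$ suppressed as in the text) is supported in a dilated, translated copy of the rotated truncated sector $\widehat{\Theta_j Q}$, and by the chain rule it satisfies $|\nabla_{y}^{\ell}\varphi_{J}^{j}(y)| \lesssim C_{\varphi}\ell(J)^{-\ell}|(y-c_{J})/\ell(J)|^{-\ell} = C_{\varphi}|y-c_{J}|^{-\ell}$ for $\ell = 1, 2$, with constants independent of $j$ and of $J$. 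Moreover, whenever $\mathbf{1}_{J}(x)\,\varphi_{J}^{j}(y) \neq 0$, the point $y$ lies outside the expansion $\tfrac{\gamma}{\rho}J$ of $J$ while $x \in J$, so the same elementary geometric comparison used in the unit-cube proof — now applied to the sector $\widehat{\Theta_j^J J}$ of aperture $\theta$ separated by $\gamma$ from $J$ — yields $|x-y| \approx |y-c_J|$ with constants depending only on $n$, $\gamma$, $\rho$ and $\theta$. Combining these with the CZ size and first-order smoothness of $\mathbf{K}^{\alpha}$ gives the three displayed inequalities by the product rule, exactly as in the proof of the unit-cube lemma.

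The main obstacle, such as it is, is purely bookkeeping: the sector $\widehat{\Theta_j^J J}$ is only \emph{approximately} a rotation of a dilate of $\widehat{Q}$ (since cubes are not rotation-invariant), so one cannot literally pull back by a rigid motion. One must instead verify directly — using the explicit description $\widehat{\Theta_j^J J} = \delta_{\ell(J)}\widehat{\Theta_j Q} + c_J$ — that the separation condition $y \notin (\gamma/\rho)J$ and the aperture condition $(y-c_J)/|y-c_J| \in \rho B_{\mathbb{S}^{n-1}}(\Theta_j \mathbf{e}_1, \theta)$ together force $|x-y| \geq c|y-c_J|$ with $c > 0$ depending only on $n$, $\gamma$, $\rho$ and $\theta$. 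Once this uniform geometric comparison is in place, the rest of the argument is verbatim the previous lemma, and the constants are uniform in $\mathcal{J} \in \mathfrak{J}$ and $1 \leq j \leq M$ as required.
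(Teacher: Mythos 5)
Your proposal is correct and follows essentially the same route as the paper: pairwise disjointness of the cubes reduces the sum to a single $J = J_{k(x)}$, the chain rule gives $|\nabla_y^{\ell}\varphi_J^j| \lesssim C_{\varphi}|y - c_J|^{-\ell}$, the support conditions $x \in J$ and $y \notin (\gamma/\rho)J$ force $|x-y| \approx |y - c_J|$, and the product rule combined with the Calder\'on--Zygmund size and smoothness of $\mathbf{K}^{\alpha}$ yields the three displayed bounds. The paper streamlines the rotation bookkeeping by taking $j=1$ without loss of generality (with the general $j$ handled implicitly by uniformity of the finitely many bumps $\varphi^j$), whereas you flag and then discharge that point explicitly; this is a cosmetic difference only.
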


\begin{proof}
The first line in (\ref{CZ est}) is automatic since the cubes $J_{k}$ are
pairwise disjoint:%
\begin{equation*}
\left\vert \mathbf{K}_{\mathcal{J}}^{\alpha }\Theta _{1}^{-1}\left(
x,y\right) \right\vert =\left\vert \sum_{k=1}^{\infty }\mathbf{1}%
_{J_{k}}\left( x\right) \ \mathbf{K}^{\alpha }\left( x,y\right) \ \varphi
_{J_{k}}\left( y\right) \right\vert \leq \left\vert \mathbf{K}^{\alpha
}\left( x,y\right) \right\vert \leq \left\Vert \mathbf{K}^{\alpha
}\right\Vert _{CZ_{\alpha }}\left\vert x-y\right\vert ^{\alpha -n}.
\end{equation*}%
Now note that 
\begin{eqnarray*}
\left\vert \nabla _{y}\varphi _{J}\left( y\right) \right\vert &=&\left\vert
\nabla _{y}\left[ \varphi \left( \frac{y-c_{J}}{\ell \left( J\right) }%
\right) \right] \right\vert =\left\vert \nabla \varphi \left( \frac{y-c_{J}}{%
\ell \left( J\right) }\right) \frac{1}{\ell \left( J\right) }\right\vert \\
&\leq &C_{\varphi }\left\vert \frac{y-c_{J}}{\ell \left( J\right) }%
\right\vert ^{-1}\frac{1}{\ell \left( J\right) }=C_{\varphi }\left\vert
y-c_{J}\right\vert ^{-1}.
\end{eqnarray*}%
For the second line we may suppose without loss of generality that $j=1$ so
that $\left( \Theta _{1}^{J_{k}}\right) ^{-1}$ is the identity rotation
about $c_{J_{k}}$, i.e. the identity map, and thus $\mathbf{K}_{\mathcal{J}%
}^{\alpha }\Theta _{1}^{-1}=\mathbf{K}_{\mathcal{J}}^{\alpha }$. If $\mathbf{%
K}_{\mathcal{J}}^{\alpha }\left( x,y\right) \neq 0$, then $x\in J_{k}$ for a
unique $k\geq 1$ and%
\begin{equation*}
3\sqrt{n}\left\vert y-c_{J_{k}}\right\vert \leq \left\vert x-y\right\vert
\leq 5\sqrt{n}\left\vert y-c_{J_{k}}\right\vert ,
\end{equation*}%
and then we have%
\begin{eqnarray*}
\left\vert \nabla _{y}\mathbf{K}_{\mathcal{J}}^{\alpha }\Theta
_{j}^{-1}\left( x,y\right) \right\vert &=&\left\vert \mathbf{1}%
_{J_{k}}\left( x\right) \ \nabla _{y}\left\{ \mathbf{K}^{\alpha }\left(
x,y\right) \ \widehat{\varphi }_{J_{k}}\left( y\right) \right\} \right\vert
\\
&\leq &\left\vert \nabla _{y}\mathbf{K}^{\alpha }\left( x,y\right)
\right\vert \ \left\vert \widehat{\varphi }_{J_{k}}\left( y\right)
\right\vert +\left\vert \mathbf{K}^{\alpha }\left( x,y\right) \right\vert \
\left\vert \nabla _{y}\varphi _{J_{k}}\left( y\right) \right\vert \\
&\lesssim &\left\Vert \mathbf{K}^{\alpha }\right\Vert _{CZ_{\alpha
}}\left\vert x-y\right\vert ^{\alpha -n-1}+\left\Vert \mathbf{K}^{\alpha
}\right\Vert _{CZ_{\alpha }}\left\vert x-y\right\vert ^{\alpha -n}C_{\varphi
}\left\vert y-c_{J_{k}}\right\vert ^{-1} \\
&\lesssim &\left\Vert \mathbf{K}^{\alpha }\right\Vert _{CZ_{\alpha
}}\left\vert x-y\right\vert ^{\alpha -n-1}.
\end{eqnarray*}%
Similarly we have $\left\vert \nabla _{y}^{2}\mathbf{K}_{\mathcal{J}%
}^{\alpha }\Theta _{j}^{-1}\left( x,y\right) \right\vert \lesssim \left\Vert 
\mathbf{K}^{\alpha }\right\Vert _{CZ_{\alpha }}\left\vert x-y\right\vert
^{\alpha -n-2}$.
\end{proof}

\subsection{Family of twisted localizations}

In order to derive the deep energy condition, it is \emph{not} enough to
assume the uniform boundedness of the family $\left\{ \mathbf{T}_{\mathcal{J}%
}^{\alpha }\Theta _{j}^{-1}\right\} _{\mathcal{J}\in \mathfrak{J}}$ of
localizations of $\mathbf{T}^{\alpha }$, see Remark \ref{loc unif} at the
end of the paper, but rather we must assume uniform boundedness of the
larger family $\left\{ \Theta _{i}\mathbf{T}_{\mathcal{J}}^{\alpha }\Theta
_{j}^{-1}\right\} _{\mathcal{J}\in \mathfrak{J}}$ of \emph{twisted
localizations} of $\mathbf{T}^{\alpha }$ given by%
\begin{eqnarray}
\left[ \Theta _{i}\mathbf{T}_{\mathcal{J}}^{\alpha }\Theta _{j}^{-1}\right]
_{\sigma }f\left( x\right) &\equiv &\int \Theta _{i}\mathbf{K}_{\mathcal{J}%
}^{\alpha }\Theta _{j}^{-1}\left( x,y\right) \ f\left( y\right) d\sigma
\left( y\right) ,  \label{kernel twist} \\
\Theta _{i}\mathbf{K}_{\mathcal{J}}^{\alpha }\Theta _{j}^{-1}\left(
x,y\right) &\equiv &\sum_{J\in \mathcal{J}}\mathbf{1}_{J}\left( x\right) \ 
\mathbf{K}^{\alpha }\left( \Theta _{i}^{J}x,y\right) \ \varphi _{J}\left(
\left( \Theta _{j}^{J}\right) ^{-1}y\right) ,  \notag
\end{eqnarray}%
where we have pre-rotated the kernel by a rotation $\Theta _{j}^{J}$
centered at $c_{J}$, and post-rotated the kernel by a rotation $\Theta
_{i}^{J}$ centered at $c_{J}$. For a single cube $J$, we refer to $\Theta
_{i}\mathbf{T}_{J}^{\alpha }\Theta _{j}^{-1}$ as a \emph{twisted localization%
} of $\mathbf{T}^{\alpha }$ to the cube $J$ and sector $\widehat{J}%
=c_{J}+\Theta _{j}\widehat{Q}$, which is \emph{twisted} by the post-
rotation $\Theta _{i}$. For a collection of cubes $\mathcal{J}\in \mathfrak{J%
}$, we refer to the infinite sum $\Theta _{i}\mathbf{T}_{\mathcal{J}%
}^{\alpha }\Theta _{j}^{-1}\equiv \sum_{J\in \mathcal{J}}\Theta _{i}\mathbf{T%
}_{J_{k}}^{\alpha }\Theta _{j}^{-1}$ as a \emph{twisted localization} of $%
\mathbf{T}^{\alpha }$ to the collection of cubes $\mathcal{J}$. Finally, we
then refer to the family of operators $\left\{ \Theta _{i}\mathbf{T}_{%
\mathcal{J}}^{\alpha }\Theta _{j}\right\} _{\mathcal{J}\in \mathfrak{J}\text{
and }1\leq i,j\in M}$ as the family of \emph{twisted localizations} of the
operator $\mathbf{T}^{\alpha }$. Again, using $\left\vert \Theta
_{i}^{J}x-c_{J}\right\vert =\left\vert x-c_{J}\right\vert $ together with
the argument for the localized kernels $\mathbf{K}_{\mathcal{J}}^{\alpha
}\Theta _{j}^{-1}$in the proof of Lemma \ref{CZalpha} above, it is easy to
obtain a one-sided Calder\'{o}n-Zygmund kernel estimate for the twisted
localizations.

\begin{lemma}
\label{CZalpha twist}Let $\Theta _{i}\mathbf{K}_{\mathcal{J}}^{\alpha
}\Theta _{j}^{-1}$ be as in the second line of (\ref{kernel twist}). Then 
\begin{eqnarray*}
\left\vert \Theta _{i}\mathbf{K}_{\mathcal{J}}^{\alpha }\Theta
_{j}^{-1}\left( x,y\right) \right\vert &\lesssim &\left\Vert \mathbf{K}%
^{\alpha }\right\Vert _{CZ_{\alpha }}\left\vert x-y\right\vert ^{\alpha -n},
\\
\left\vert \nabla _{y}^{\ell }\Theta _{i}\mathbf{K}_{\mathcal{J}}^{\alpha
}\Theta _{j}^{-1}\left( x,y\right) \right\vert &\lesssim &\left\Vert \mathbf{%
K}^{\alpha }\right\Vert _{CZ_{\alpha }}\left\vert x-y\right\vert ^{\alpha
-n-\ell },\ \ \ \ \ \ell =1,2.
\end{eqnarray*}
\end{lemma}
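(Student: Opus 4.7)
The plan is to reduce the proof to the computation already carried out in Lemma \ref{CZalpha}, using two structural observations: the pre-rotation $\Theta_i^J$ acts only on the $x$-argument of $\mathbf{K}^{\alpha}$ (where no regularity is required) and fixes the cube center $c_J$, so it preserves all distances from $c_J$; and the rotated bump $\varphi_J((\Theta_j^J)^{-1}\cdot)$ has the same size and gradient bounds as $\varphi_J$ itself, since $\Theta_j^J$ is an isometry.

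The first step is to establish a distance comparability on the support of the kernel. If $\Theta_i\mathbf{K}_{\mathcal{J}}^{\alpha}\Theta_j^{-1}(x,y)\neq 0$, then $x\in J$ for a unique $J\in\mathcal{J}$, so $|x-c_J|\leq \tfrac{\sqrt{n}}{2}\ell(J)$, while the support condition on $\varphi_J((\Theta_j^J)^{-1}\cdot)$ forces $(\Theta_j^J)^{-1}(y-c_J)/\ell(J)\in\widehat{Q}^{\ast}$; since $\Theta_j^J$ fixes $c_J$ and is an isometry, this is equivalent to $|y-c_J|\geq \tfrac{\gamma}{2\rho}\ell(J)$. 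Choosing $\gamma/\rho$ large as in Lemma \ref{CZalpha}, together with the identity $|\Theta_i^J x-c_J|=|x-c_J|$, yields
\[
|x-y|\approx |y-c_J|\approx |\Theta_i^J x-y|,
\]
with constants depending only on $n$, $\gamma$, $\rho$. The size bound is then immediate, since pairwise disjointness of the $J\in\mathcal{J}$ gives $|\Theta_i\mathbf{K}_{\mathcal{J}}^{\alpha}\Theta_j^{-1}(x,y)|\leq |\mathbf{K}^{\alpha}(\Theta_i^J x,y)|\leq \|\mathbf{K}^{\alpha}\|_{CZ_{\alpha}}|\Theta_i^J x-y|^{\alpha-n}\lesssim \|\mathbf{K}^{\alpha}\|_{CZ_{\alpha}}|x-y|^{\alpha-n}$.

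For the $y$-derivative bounds I would apply the product rule. Since $\Theta_i^J$ only acts on the first slot, differentiation in $y$ falls on the second variable of $\mathbf{K}^{\alpha}$, yielding a term bounded by $\|\mathbf{K}^{\alpha}\|_{CZ_{\alpha}}|\Theta_i^J x-y|^{\alpha-n-1}$ by the Calderón--Zygmund size estimate on $\nabla \mathbf{K}^{\alpha}$. The other term is $\mathbf{K}^{\alpha}(\Theta_i^J x,y)\,\nabla_y[\varphi_J((\Theta_j^J)^{-1}y)]$, and the chain rule together with the isometry property of $\Theta_j^J$ gives
\[
|\nabla_y\varphi_J((\Theta_j^J)^{-1}y)|=|(\nabla\varphi_J)((\Theta_j^J)^{-1}y)|\lesssim |y-c_J|^{-1}.
\]
Substituting the distance comparability from the previous step reproduces exactly the estimate in the proof of Lemma \ref{CZalpha} and delivers the $\ell=1$ bound. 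The $\ell=2$ case follows identically by a second application of the product rule, using $|\nabla^2\varphi_J|\lesssim |\cdot-c_J|^{-2}$ together with the second-derivative size bound $|\nabla_y^2\mathbf{K}^{\alpha}|\lesssim |\cdot|^{\alpha-n-2}$ (available from the CZ hypothesis).

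The only real obstacle is the bookkeeping of the chain-rule factors arising from the rotated bump $\varphi_J((\Theta_j^J)^{-1}\cdot)$; once one observes that rotations preserve both distances from the fixed point $c_J$ and the magnitudes of gradients, the lemma reduces almost mechanically to Lemma \ref{CZalpha}. In particular, the post-rotation $\Theta_i$ is essentially free, since it never interacts with a $y$-derivative.
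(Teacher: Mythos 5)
Your proposal is correct and follows essentially the same route as the paper, which gives Lemma \ref{CZalpha twist} a one-line proof: note that $\left\vert \Theta_i^J x - c_J\right\vert = \left\vert x - c_J\right\vert$ and invoke the argument of Lemma \ref{CZalpha}. You simply spell out the details the paper elides — the support-based distance comparability $\left\vert x-y\right\vert \approx \left\vert y-c_J\right\vert \approx \left\vert \Theta_i^J x - y\right\vert$, the chain-rule bookkeeping showing the rotated bump inherits the same decay, and the product rule for $\ell=1,2$. (Two cosmetic slips: you open by calling $\Theta_i^J$ the ``pre-rotation'' although the paper calls it the post-rotation and you switch to that name yourself at the end; and the expression $(\Theta_j^J)^{-1}(y-c_J)/\ell(J)$ should read $((\Theta_j^J)^{-1}y - c_J)/\ell(J)=\Theta_j^{-1}(y-c_J)/\ell(J)$. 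Neither affects the argument.)
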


In applications to the necessity of the strong energies $\mathcal{E}%
_{2}^{\alpha }$ and $\mathcal{E}_{2}^{\alpha ,\ast }$\ in Definition \ref%
{def strong quasienergy}, one would take $\mathcal{J}=\left\{ I_{r}\right\}
_{r=1}^{\infty }$.

Even more generally, given a sequence $\mathcal{J}=\left\{ J_{k}\right\}
_{k=1}^{\infty }$ of pairwise disjoint subcubes of a cube $I$ satisfying (%
\ref{overlap}), \textbf{and} a choice of pre- and post-rotations $\widetilde{%
\Theta }_{\limfunc{pre}}\equiv \left\{ \Theta _{j_{k}}\right\}
_{k=1}^{\infty }$ and $\widetilde{\Theta }_{\limfunc{post}}\equiv \left\{
\Theta _{i_{k}}\right\} _{k=1}^{\infty }$, we define the vector operator%
\begin{equation*}
\widetilde{\Theta }_{\limfunc{pre}}\mathbf{T}_{\mathcal{J}}^{\alpha }%
\widetilde{\Theta }_{\limfunc{post}}f=\sum_{k=1}^{\infty }\left[ \Theta
_{i_{k}}\mathbf{T}_{J_{k}}^{\alpha }\Theta _{j_{k}}^{-1}\right] f,\ \ \ \ \
1\leq i,j\leq M,
\end{equation*}%
which has kernel%
\begin{equation*}
\widetilde{\Theta }_{\limfunc{pre}}\mathbf{K}_{\mathcal{J}}^{\alpha }%
\widetilde{\Theta }_{\limfunc{post}}\left( x,y\right) =\sum_{k=1}^{\infty
}\varphi _{J_{k}}\left( x\right) \ \mathbf{K}^{\alpha }\left( \Theta
_{i_{k}}^{J_{k}}x,y\right) \ \varphi _{J}\left( \left( \Theta
_{j_{k}}^{J_{k}}\right) ^{-1}y\right) ,
\end{equation*}%
whose rotations now vary with the subcube $J_{k}$. We will show that for
appropriate operators $\mathbf{T}^{\alpha }$, including the Riesz transform
vector $\mathbf{R}^{\alpha ,n}$, we can actually use reversal of energy for
the \emph{single} operator $\widetilde{\Theta }_{\limfunc{pre}}\mathbf{T}_{%
\mathcal{J}}^{\alpha }\widetilde{\Theta }_{\limfunc{post}}$ to deduce the 
\emph{single} inequality%
\begin{equation*}
\sum_{k=1}^{\infty }\left( \frac{\mathrm{P}^{\alpha }\left( J_{k},\mathbf{1}%
_{I}\sigma \right) }{\left\vert J_{k}\right\vert ^{\frac{1}{n}}}\right)
^{2}\left\Vert \mathsf{P}_{J_{k}}^{\omega }\mathbf{x}\right\Vert
_{L^{2}\left( \omega \right) }^{2}\leq \left( \left( \mathfrak{T}_{%
\widetilde{\Theta }_{\limfunc{pre}}\mathbf{T}_{\mathcal{J}}^{\alpha }%
\widetilde{\Theta }_{\limfunc{post}}}\right) ^{2}+\mathcal{A}_{2}^{\alpha
}\right) \left\vert I\right\vert _{\sigma }\ ,
\end{equation*}%
when $\mathcal{J}$ is taken to be $\left\{ J_{k}\right\} _{k=1}^{\infty }$,
and $\widetilde{\Theta }_{\limfunc{pre}}$ and $\widetilde{\Theta }_{\limfunc{%
post}}$ are chosen appropriately depending on $\sigma $ and $\omega $
respectively.

\subsection{Reversal of energy}

Fix a cube $J_{k}$ and indices $1\leq i,j\leq M$. Let $\theta =\theta _{%
\mathbf{T}^{\alpha }}$ be the angle in the positive gradient property for
the operator $\mathbf{T}^{\alpha }$, and set $\lambda =\tan \theta $. Then
set $B_{\mathbb{S}^{n-1}}\equiv B_{\mathbb{S}^{n-1}}\left( \mathbf{e}%
_{1},\theta \right) $ and take $x,z\in J_{k}$ with $x-z\in \Theta
_{i}^{-1}\Theta _{j}B_{\mathbb{S}^{n-1}}$ so that 
\begin{eqnarray*}
&&\Theta _{i}^{J_{k}}x-\Theta _{i}^{J_{k}}z=\Theta _{i}\left( x-z\right) \in
\Theta _{j}B_{\mathbb{S}^{n-1}}, \\
&&\Theta _{i}^{J_{k}}x-y\text{ and }\Theta _{i}^{J_{k}}z-y\in \Theta _{j}B_{%
\mathbb{S}^{n-1}}.
\end{eqnarray*}%
Let $p=\Theta _{i}^{J_{k}}x$ and $q=\Theta _{i}^{J_{k}}z$. Without loss of
generality we can take $\Theta _{j}=Id$ the identity for this argument. Then
for $x=\left( x^{1},\widetilde{x}\right) $ and $z=\left( z^{1},\widetilde{z}%
\right) $ in $J_{k}$ with $\left\vert \widetilde{x}-\widetilde{z}\right\vert
\leq \lambda \left\vert x_{1}-z_{1}\right\vert $ (equivalently $\frac{x-z}{%
\left\vert x-z\right\vert }\in B_{\mathbb{S}^{n-1}}$), we claim the
following `strong reversal' of energy. Since $\mathbf{1}_{J_{k}}\left(
x\right) =1=\mathbf{1}_{J_{k}}\left( z\right) $, we can compute%
\begin{eqnarray*}
&&\frac{\left[ \left( \Theta _{i}\mathbf{T}_{J_{k}}^{\alpha }\Theta
_{j}\right) _{\sigma }\right] _{1}\mathbf{1}_{I}\left( x\right) -\left[
\left( \Theta _{i}\mathbf{T}_{J_{k}}^{\alpha }\Theta _{j}\right) _{\sigma }%
\right] _{1}\mathbf{1}_{I}\left( z\right) }{\left[ \Theta _{i}\left(
x-z\right) \right] _{1}} \\
&=&\int \left\{ \frac{\mathbf{1}_{J_{k}}\left( x\right) \mathbf{K}^{\alpha
}\left( p,y\right) -\mathbf{1}_{J_{k}}\left( z\right) \mathbf{K}^{\alpha
}\left( q,y\right) }{\left[ \Theta _{i}^{J_{k}}x\right] _{1}-\left[ \Theta
_{i}^{J_{k}}z\right] _{1}}\right\} \widehat{\varphi }\left( \frac{y-c_{J_{k}}%
}{\ell \left( J_{k}\right) }\right) \mathbf{1}_{I}\left( y\right) d\sigma
\left( y\right) \\
&=&\int \left\{ \frac{K_{1}^{\alpha }\left( p,y\right) -K_{1}^{\alpha
}\left( q,y\right) }{p_{1}-q_{1}}\right\} \widehat{\varphi }\left( \frac{%
y-c_{J_{k}}}{\ell \left( J_{k}\right) }\right) \mathbf{1}_{I}\left( y\right)
d\sigma \left( y\right) ,
\end{eqnarray*}%
and since $K_{1}^{\alpha }$ is a convolution operator, the term in braces
satisfies%
\begin{eqnarray*}
\frac{K_{1}^{\alpha }\left( p-y\right) -K_{1}^{\alpha }\left( q-y\right) }{%
p_{1}-q_{1}} &=&\frac{K_{1}^{\alpha }\left( p_{1}-y_{1},\widetilde{p}-%
\widetilde{y}\right) -K_{1}^{\alpha }\left( q_{1}-y_{1},\widetilde{q}-%
\widetilde{y}\right) }{p_{1}-q_{1}} \\
&=&\frac{K_{1}^{\alpha }\left( s,\widetilde{p}-\widetilde{y}\right)
-K_{1}^{\alpha }\left( t,\widetilde{q}-\widetilde{y}\right) }{s-t}
\end{eqnarray*}%
with $y=\left( y_{1},\widetilde{y}\right) \in \mathbb{R}\times \mathbb{R}%
^{n-1}$, $s=p_{1}-y_{1}$ and $t=q_{1}-y_{1}$. Here $K_{1}^{\alpha }\left(
\xi \right) =K_{1}^{\alpha }\left( \xi _{1},\widetilde{\xi }\right) $ is the
first component of the convolution kernel $\mathbf{K}^{\alpha }\left( \xi
\right) $ for $\xi =\left( \xi _{1},\widetilde{\xi }\right) $.

Now we invoke the \emph{positive gradient property} of $K_{1}^{\alpha }$: 
\begin{equation}
\frac{K_{1}^{\alpha }\left( \xi \right) -K_{1}^{\alpha }\left( \eta \right) 
}{\xi _{1}-\eta _{1}}\approx \left\vert \xi \right\vert ^{\alpha -n-1},\ \ \
\ \ \text{for }\xi ,\eta \in S\text{ with }\frac{\left\vert \widetilde{\xi }-%
\widetilde{\eta }\right\vert }{\left\vert \xi _{1}-\eta _{1}\right\vert }%
\leq \lambda .  \label{grad pos prop}
\end{equation}%
In particular, we then have%
\begin{equation*}
\frac{K_{1}^{\alpha }\left( s,\widetilde{p}-\widetilde{y}\right)
-K_{1}^{\alpha }\left( t,\widetilde{q}-\widetilde{y}\right) }{s-t}\approx
\left\vert \left( s,\widetilde{p}-\widetilde{y}\right) \right\vert ^{\alpha
-n-1}\approx \left\vert c_{J_{k}}-y\right\vert ^{\alpha -n-1},
\end{equation*}%
since%
\begin{equation*}
\widetilde{p}-\widetilde{y}=\widetilde{\Theta _{i}^{J_{k}}x}-\widetilde{y}=%
\widetilde{\Theta _{i}\left( x-c_{J_{k}}\right) }+\widetilde{c_{J_{k}}}-%
\widetilde{y}
\end{equation*}%
satisfies%
\begin{eqnarray*}
\left\Vert \widetilde{p}-\widetilde{y}\right\Vert &\leq &\left\Vert 
\widetilde{\Theta _{i}\left( x-c_{J_{k}}\right) }\right\Vert +\left\Vert 
\widetilde{c_{J_{k}}}-\widetilde{y}\right\Vert \\
&\leq &\left\Vert x-c_{J_{k}}\right\Vert +\left\Vert \widetilde{c_{J}}-%
\widetilde{y}\right\Vert \lesssim \lambda \left\vert p_{1}-y_{1}\right\vert ,
\end{eqnarray*}%
and similarly $\left\Vert \widetilde{q}-\widetilde{y}\right\Vert \lesssim
\lambda \left\vert p_{1}-y_{1}\right\vert $.

Thus we have%
\begin{eqnarray*}
&&\frac{\left[ \left( \Theta _{i}\mathbf{T}_{J_{k}}^{\alpha }\Theta
_{j}\right) _{\sigma }\right] _{1}\mathbf{1}_{I}\left( x\right) -\left[
\left( \Theta _{i}\mathbf{T}_{J_{k}}^{\alpha }\Theta _{j}\right) _{\sigma }%
\right] _{1}\mathbf{1}_{I}\left( z\right) }{\left[ \Theta _{i}\left(
x-z\right) \right] _{1}} \\
&=&\frac{K_{1}^{\alpha }\left( s,\widetilde{p}-\widetilde{y}\right)
-K_{1}^{\alpha }\left( t,\widetilde{q}-\widetilde{y}\right) }{s-t}\approx
\left\vert c_{J_{k}}-y\right\vert ^{\alpha -n-1},
\end{eqnarray*}%
and so in general,%
\begin{eqnarray*}
&&\left\vert \frac{\left[ \left( \Theta _{i}\mathbf{T}_{J_{k}}^{\alpha
}\Theta _{j}\right) _{\sigma }\right] _{1}\mathbf{1}_{I}\left( x\right) -%
\left[ \left( \Theta _{i}\mathbf{T}_{J_{k}}^{\alpha }\Theta _{j}\right)
_{\sigma }\right] _{1}\mathbf{1}_{I}\left( z\right) }{\left[ \Theta
_{i}\left( x-z\right) \right] _{1}}\right\vert \gtrsim \frac{\mathrm{P}%
^{\alpha }\left( J_{k},\mathbf{1}_{\Theta _{j}\widehat{Q}}\right) }{%
\left\vert J_{k}\right\vert ^{\frac{1}{n}}}, \\
&&\ \ \ \ \ \ \ \ \ \ \text{for }x,z\in J_{k}\text{ with }\frac{x-z}{%
\left\vert x-z\right\vert }\in \Theta _{i}B_{\mathbb{S}^{n-1}}.
\end{eqnarray*}

Thus\ with $\Phi _{i}\equiv \left\{ \left( x,z\right) \in \mathbb{R}%
^{n}\times \mathbb{R}^{n}:\frac{x-z}{\left\vert x-z\right\vert }\in \Theta
_{i}B_{\mathbb{S}^{n-1}}\right\} $ and%
\begin{equation*}
\mathsf{F}_{i}\left( J_{k},\omega \right) ^{2}\equiv \frac{1}{\left\vert
J_{k}\right\vert _{\omega }}\int \int_{J_{k}\times J_{k}\cap \Phi
_{i}}\left\vert x-z\right\vert ^{2}d\omega \left( x\right) d\omega \left(
z\right) ,
\end{equation*}%
we have%
\begin{equation*}
\frac{1}{\left\vert J_{k}\right\vert _{\omega }}\int \int_{J_{k}\times
J_{k}\cap \Phi _{i}}\left\vert x-z\right\vert ^{2}d\omega \left( x\right)
d\omega \left( z\right) =\sum_{i=1}^{N}\mathsf{F}_{i}\left( J_{k},\omega
\right) ^{2}
\end{equation*}%
and so%
\begin{eqnarray*}
&&\sum_{k=1}^{\infty }\left( \frac{\mathrm{P}^{\alpha }\left( J_{k},\mathbf{1%
}_{c_{J_{k}}+\Theta _{j}\widehat{Q}}\mathbf{1}_{I}\sigma \right) }{%
\left\vert J_{k}\right\vert ^{\frac{1}{n}}}\right) ^{2}\left( \frac{1}{%
\left\vert J_{k}\right\vert _{\omega }}\int \int_{J_{k}\times
J_{k}}\left\vert x-z\right\vert ^{2}d\omega \left( x\right) d\omega \left(
z\right) \right) \\
&=&\sum_{i=1}^{N}\sum_{k=1}^{\infty }\left( \frac{\mathrm{P}^{\alpha }\left(
J_{k},\mathbf{1}_{c_{J_{k}}+\Theta _{j}\widehat{Q}}\mathbf{1}_{I}\sigma
\right) }{\left\vert J_{k}\right\vert ^{\frac{1}{n}}}\right) ^{2}\mathsf{F}%
_{i}\left( J_{k},\omega \right) ^{2} \\
&\lesssim &\sum_{i=1}^{N}\sum_{k=1}^{\infty }\frac{1}{\left\vert
J_{k}\right\vert _{\omega }}\int \int_{J_{k}\times J_{k}\cap \Phi
_{i}}\left\vert \left[ \left( \Theta _{i}\mathbf{T}_{J_{k}}^{\alpha }\Theta
_{j}\right) _{\sigma }\right] _{1}\mathbf{1}_{I}\left( x\right) -\left[
\left( \Theta _{i}\mathbf{T}_{J_{k}}^{\alpha }\Theta _{j}\right) _{\sigma }%
\right] _{1}\mathbf{1}_{I}\left( z\right) \right\vert ^{2} \\
&\lesssim &\sum_{i=1}^{N}\sum_{k=1}^{\infty }\int_{J_{k}}\left\vert \left[
\left( \Theta _{i}\mathbf{T}_{J_{k}}^{\alpha }\Theta _{j}\right) _{\sigma }%
\right] _{1}\mathbf{1}_{I}\left( x\right) \right\vert ^{2}\lesssim
\sup_{1\leq i\leq N}\left( \mathfrak{T}_{\Theta _{i}\mathbf{T}_{\mathcal{J}%
}^{\alpha }\Theta _{j}}\right) ^{2}\left\vert I\right\vert _{\sigma }\ .
\end{eqnarray*}%
Finally then using $\mathbf{1}_{I\setminus \gamma J_{k}}\leq \sum_{j=1}^{M}%
\mathbf{1}_{c_{J_{k}}+\Theta _{j}\widehat{Q}}\mathbf{1}_{I}$, we have%
\begin{eqnarray*}
&&\sum_{k=1}^{\infty }\left( \frac{\mathrm{P}^{\alpha }\left( J_{k},\mathbf{1%
}_{I\setminus \gamma J_{k}}\sigma \right) }{\left\vert J_{k}\right\vert ^{%
\frac{1}{n}}}\right) ^{2}\left\Vert \mathsf{P}_{J_{k}}^{\omega }x\right\Vert
_{L^{2}\left( \omega \right) }^{2} \\
&\lesssim &\sup_{1\leq j\leq N}\sum_{k=1}^{\infty }\left( \frac{\mathrm{P}%
^{\alpha }\left( J_{k},\mathbf{1}_{c_{J_{k}}+\Theta _{j}\widehat{Q}}\mathbf{1%
}_{I}\sigma \right) }{\left\vert J_{k}\right\vert ^{\frac{1}{n}}}\right)
^{2}\left( \frac{1}{\left\vert J_{k}\right\vert _{\omega }}\int
\int_{J_{k}\times J_{k}}\left\vert x-z\right\vert ^{2}d\omega \left(
x\right) d\omega \left( z\right) \right) \\
&\lesssim &\sup_{1\leq i,j\leq N}\left( \mathfrak{T}_{\Theta _{i}\mathbf{T}_{%
\mathcal{J}}^{\alpha }\Theta _{j}}\right) ^{2}\left\vert I\right\vert
_{\sigma }\ ,
\end{eqnarray*}%
which proves the forward deep and bounded overlap energy conditions with 
\begin{equation*}
\mathcal{E}_{2}^{\alpha ,\limfunc{deep}}\leq \mathcal{E}_{2}^{\alpha ,%
\limfunc{overlap}}\lesssim \sup_{\mathcal{J}}\sup_{1\leq i,j\leq N}\mathfrak{%
T}_{\Theta _{i}\mathbf{T}_{\mathcal{J}}^{\alpha }\Theta _{j}},
\end{equation*}%
where the supremum in $\mathcal{J}$ is taken over all sequences $\left\{
J_{k}\right\} _{k=1}^{\infty }$ of subcubes of $I$ such that $%
\sum_{k=1}^{\infty }\mathbf{1}_{J_{k}^{\ast }}\leq \beta \mathbf{1}_{I}$.
Indeed, we have%
\begin{equation*}
\sup_{I\supset \dot{\cup}_{r=1}^{\infty }I_{r}}\sup_{\ell \geq
0}\sum_{r=1}^{\infty }\sum_{J\in \mathcal{M}_{\limfunc{deep}}^{\ell }\left(
I_{r}\right) }\left( \frac{\mathrm{P}^{\alpha }\left( J,\mathbf{1}_{I}\sigma
\right) }{\left\vert J\right\vert ^{\frac{1}{n}}}\right) ^{2}\left\Vert 
\mathsf{P}_{J}^{\omega }\mathbf{x}\right\Vert _{L^{2}\left( \omega \right)
}^{2}\leq \left\{ \sup_{\mathcal{J}}\sup_{1\leq i,j\leq N}\left( \mathfrak{T}%
_{\Theta _{i}\mathbf{T}_{\mathcal{J}}^{\alpha }\Theta _{j}}\right)
^{2}+\beta A_{2}^{\alpha }\right\} \left\vert I\right\vert _{\sigma }\ ,
\end{equation*}%
after writing $\mathbf{1}_{I}=\mathbf{1}_{I\setminus \gamma J}+\mathbf{1}%
_{\gamma J}$, and similarly for the bounded overlap energy condition. Thus
we see that the deep and bounded overlap energy constants $\mathcal{E}%
_{2}^{\alpha ,\limfunc{deep}}$ and $\mathcal{E}_{2}^{\alpha ,\limfunc{overlap%
}}$ are controlled by the testing constants $\mathfrak{T}_{\Theta _{i}%
\mathbf{T}_{\mathcal{J}}^{\alpha }\Theta _{j}}$ for the family $\left\{
\Theta _{i}\mathbf{T}_{\mathcal{J}}^{\alpha }\Theta _{j}\right\} _{\mathcal{J%
},i,j}$ of twisted localizations of an operator $\mathbf{T}^{\alpha }$ with
the positive gradient property. Proposition \ref{necessity} is now proved
save for the assertion regarding the Riesz transform, to which we now turn.

\subsubsection{Positive gradient property of the Riesz transform}

Finally we establish the positive gradient property for the negative of the
vector Riesz transform $\mathbf{R}^{\alpha ,n}$ with kernel $\mathbf{K}%
^{\alpha ,n}$.

\begin{lemma}
\label{Riesz pgp}The operator $-\mathbf{R}^{\alpha ,n}$ has the positive
gradient property.
\end{lemma}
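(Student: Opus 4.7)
The plan is to exploit the radial covariance of the Riesz transform to reduce the verification, for every sector $S_{\ell}=\Theta_{\ell}S$, to a single scalar kernel on the unit sector $S$. Writing $-K_{j}^{\alpha,n}(x)=-x_{j}/|x|^{n+1-\alpha}$ for the components of $-\mathbf{R}^{\alpha,n}$, I would assign to the sector $S_{\ell}$ the coefficient vector $\lambda^{(\ell)}=\Theta_{\ell}^{-1}\mathbf{e}_{1}$. Orthogonality of $\Theta_{\ell}$ yields $\langle\lambda^{(\ell)},\Theta_{\ell}^{-1}\xi\rangle=\langle\Theta_{\ell}\lambda^{(\ell)},\xi\rangle=\xi_{1}$, so the scalar kernel in condition (2) of the positive gradient property collapses to
$$K(\xi)\;=\;\sum_{j=1}^{n}\lambda_{j}^{(\ell)}\bigl(-K_{j}^{\alpha,n}(\Theta_{\ell}^{-1}\xi)\bigr)\;=\;-\frac{\xi_{1}}{|\xi|^{n+1-\alpha}},$$
independently of $\ell$. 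It therefore suffices to verify the positive gradient property for this single kernel on $S$.

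The heart of the matter is then the pointwise estimate $\partial_{\xi_{1}}K(\xi)\approx|\xi|^{\alpha-n-1}$ on $S$. A direct differentiation gives
$$\partial_{\xi_{1}}K(\xi)\;=\;\frac{1}{|\xi|^{n+1-\alpha}}\left((n+1-\alpha)\frac{\xi_{1}^{2}}{|\xi|^{2}}-1\right),\qquad|\nabla_{\widetilde{\xi}}K(\xi)|\;\lesssim\;\frac{1}{|\xi|^{n+1-\alpha}}.$$
Since $0\le\alpha<n$ forces $n+1-\alpha>1$, and since $\xi\in S$ gives $\xi_{1}/|\xi|\ge\cos\theta$, choosing the aperture $\theta=\theta(n,\alpha)$ small enough that $(n+1-\alpha)\cos^{2}\theta>1+c_{0}$ for a positive margin $c_{0}$ traps the bracketed factor between two positive constants and yields the claimed pointwise two-sided bound.

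To upgrade this to the required difference-quotient statement, I would invoke the fundamental theorem of calculus along the segment $\zeta_{t}=\eta+t(\xi-\eta)$, which stays inside $S$ because $S$ is a convex cone. Dividing by $\xi_{1}-\eta_{1}$ produces
$$\frac{K(\xi)-K(\eta)}{\xi_{1}-\eta_{1}}\;=\;\int_{0}^{1}\partial_{1}K(\zeta_{t})\,dt\;+\;\int_{0}^{1}\nabla_{\widetilde{\xi}}K(\zeta_{t})\cdot\frac{\widetilde{\xi}-\widetilde{\eta}}{\xi_{1}-\eta_{1}}\,dt,$$
whose principal integrand is bounded above and below by positive multiples of $|\zeta_{t}|^{\alpha-n-1}$, while the remainder is at most $C\tan\theta\,|\zeta_{t}|^{\alpha-n-1}$ by the gradient bound together with the hypothesis $|\widetilde{\xi}-\widetilde{\eta}|/|\xi_{1}-\eta_{1}|\le\tan\theta$. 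Shrinking $\theta$ once more so that $C\tan\theta<c_{0}/2$ lets the diagonal term dominate, and in the regime where the bound is applied (namely $|\xi|\approx|\eta|$) one has $|\zeta_{t}|\approx|\xi|$, giving the asserted two-sided estimate. The main bookkeeping obstacle is purely quantitative: $\theta$ must simultaneously satisfy $(n+1-\alpha)\cos^{2}\theta>1$ and $C\tan\theta<c_{0}/2$, but both constraints depend only on $n-\alpha>0$, so $\theta=\theta(n,\alpha)$ can be fixed once and for all, after which $\mathbb{R}^{n}$ is covered by finitely many rotations of $S$ of this aperture via a standard compactness argument on $\mathbb{S}^{n-1}$.
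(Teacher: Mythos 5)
Your proof is correct and follows essentially the same route as the paper: you compute $\partial_{\xi_1}K$ and $\nabla_{\widetilde\xi}K$ for the scalar kernel $K(\xi)=-\xi_1/|\xi|^{n+1-\alpha}$, apply the fundamental theorem of calculus along the chord, and absorb the off-axis term by shrinking the aperture $\theta$. The only cosmetic difference is that you make explicit the rotation-covariance reduction to the unit sector (which the paper disposes of with the remark that rotation invariance of $\mathbf{R}^{\alpha,n}$ makes both properties easy), and you write the $u$-derivative as $|\xi|^{\alpha-n-1}\bigl[(n+1-\alpha)\xi_1^2/|\xi|^2-1\bigr]$ rather than the paper's equivalent form $(u^2+|w|^2)^{-(n+3-\alpha)/2}\bigl[|w|^2-(n-\alpha)u^2\bigr]$.
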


\begin{proof}
For this we compute the gradient of the first component $K_{1}^{\alpha
,n}\left( u,w\right) $ for $\left( u,w\right) \in \mathbb{R}\times \mathbb{R}%
^{n-1}$. First, the $u$ partial derivative of $K_{1}^{\alpha ,n}\left(
u,w\right) $ is%
\begin{eqnarray*}
\frac{\partial }{\partial u}K_{1}^{\alpha ,n}\left( u,w\right) &=&\left(
u^{2}+\left\vert w\right\vert ^{2}\right) ^{-\frac{n+1-\alpha }{2}}-\frac{%
n+1-\alpha }{2}\left( u^{2}+\left\vert w\right\vert ^{2}\right) ^{-\frac{%
n+1-\alpha }{2}-1}2u^{2} \\
&=&\left( u^{2}+\left\vert w\right\vert ^{2}\right) ^{-\frac{n+1-\alpha }{2}%
-1}\left\{ \left( u^{2}+\left\vert \xi \right\vert ^{2}\right) -\left(
n+1-\alpha \right) u^{2}\right\} \\
&=&\left( u^{2}+\left\vert w\right\vert ^{2}\right) ^{-\frac{n+1-\alpha }{2}%
-1}\left\{ \left\vert w\right\vert ^{2}-\left( n-\alpha \right)
u^{2}\right\} ,
\end{eqnarray*}%
which satisfies 
\begin{equation*}
\frac{\partial }{\partial u}K_{1}^{\alpha ,n}\left( u,w\right) \approx \frac{%
\left( \alpha -n\right) u^{2}}{\left( u^{2}+\left\vert w\right\vert
^{2}\right) ^{\frac{n+1-\alpha }{2}+1}}\approx \left( \alpha -n\right)
u^{\alpha -n-1}
\end{equation*}%
provided $\left\vert w\right\vert \leq \lambda u$, where $\lambda >0$ is
chosen sufficiently small depending on $\gamma $ and $\rho $. We also have,%
\begin{eqnarray*}
\nabla _{w}K_{1}^{\alpha ,n}\left( u,w\right) &=&\nabla _{w}K_{1}^{\alpha
,n}\left( u^{2}+\left\vert w\right\vert ^{2}\right) ^{-\frac{n+1-\alpha }{2}}
\\
&=&-\frac{n+1-\alpha }{2}u\left( u^{2}+\left\vert \xi \right\vert
^{2}\right) ^{-\frac{n+1-\alpha }{2}-1}2\xi \\
&=&\left( \alpha -n-1\right) uw\left( u^{2}+\left\vert \xi \right\vert
^{2}\right) ^{-\frac{n+1-\alpha }{2}-1},
\end{eqnarray*}%
which satisfies%
\begin{equation*}
\left\vert \nabla _{w}K_{1}^{\alpha ,n}\left( u,w\right) \right\vert
\lesssim \frac{\left\vert uw\right\vert }{\left( u^{2}+\left\vert
w\right\vert ^{2}\right) ^{\frac{n+1-\alpha }{2}+1}}\lesssim \lambda \frac{%
u^{2}}{\left( u^{2}+\left\vert w\right\vert ^{2}\right) ^{\frac{n+1-\alpha }{%
2}+1}},
\end{equation*}%
since $\left\vert w\right\vert \leq \lambda u$.

Altogether then 
\begin{eqnarray*}
&&K_{1}^{\alpha ,n}\left( p_{1}-y_{1},\widetilde{p}-\widetilde{y}\right)
-K_{1}^{\alpha ,n}\left( q_{1}-y_{1},\widetilde{q}-\widetilde{y}\right) \\
&=&K_{1}^{\alpha ,n}\left[ \theta p_{1}+\left( 1-\theta \right)
q_{1}-y_{1},\theta \widetilde{p}+\left( 1-\theta \right) \widetilde{q}-%
\widetilde{y}\right] \mid _{0}^{1} \\
&=&\int_{0}^{1}\frac{d}{d\theta }K_{1}^{\alpha ,n}\left[ \theta p_{1}+\left(
1-\theta \right) q_{1}-y_{1},\theta \widetilde{p}+\left( 1-\theta \right) 
\widetilde{q}-\widetilde{y}\right] \ d\theta \\
&=&\int_{0}^{1}\left( p_{1}-q_{1}\right) \left( \frac{\partial }{\partial u}%
K_{1}^{\alpha ,n}\right) \left[ \theta p_{1}+\left( 1-\theta \right)
q_{1}-y_{1},\theta \widetilde{p}+\left( 1-\theta \right) \widetilde{q}-%
\widetilde{y}\right] \ d\theta \\
&&+\int_{0}^{1}\left( \widetilde{p}-\widetilde{q}\right) \cdot \left( \nabla
_{w}K_{1}^{\alpha ,n}\right) \left[ \theta p_{1}+\left( 1-\theta \right)
q_{1}-y_{1},\theta \widetilde{p}+\left( 1-\theta \right) \widetilde{q}-%
\widetilde{y}\right] \ d\theta ,
\end{eqnarray*}%
and since 
\begin{eqnarray*}
&&\left\vert \theta \widetilde{p}+\left( 1-\theta \right) \widetilde{q}-%
\widetilde{y}\right\vert \lesssim \lambda \left\vert p_{1}-q_{1}\right\vert ,
\\
&&\left\Vert \widetilde{p}-\widetilde{q}\right\Vert \lesssim \lambda
\left\vert p_{1}-q_{1}\right\vert , \\
&&u=\theta p_{1}+\left( 1-\theta \right) q_{1}-y_{1}\approx \left\vert
c_{J_{k}}-y\right\vert ,
\end{eqnarray*}%
the above estimates give%
\begin{eqnarray*}
&&K_{1}^{\alpha ,n}\left( p_{1}-y_{1},\widetilde{p}-\widetilde{y}\right)
-K_{1}^{\alpha ,n}\left( q_{1}-y_{1},\widetilde{q}-\widetilde{y}\right) \\
&\approx &\left( p_{1}-q_{1}\right) \int_{0}^{1}\left( \alpha -n\right)
u^{\alpha -n-1}d\theta +o\left( \frac{u^{2}\left\vert p_{1}-q_{1}\right\vert 
}{\left( u^{2}+\left\vert \xi \right\vert ^{2}\right) ^{\frac{n+1-\alpha }{2}%
+1}}\right) \approx \left( p_{1}-q_{1}\right) \left( \alpha -n\right)
u^{\alpha -n-1},
\end{eqnarray*}%
provided $\lambda >0$ is chosen sufficiently small. This completes the proof
of Lemma \ref{Riesz pgp}.
\end{proof}

We have now completed the proof of Proposition \ref{necessity}.

\part{Failure of necessity of the energy condition for Riesz transforms}

In the second part of this paper, we prove Theorem \ref{counterexample} by
constructing the families of counterexample weight pairs that demonstrate
the failure of necessity of the energy conditions in higher dimensions.

In \cite{LaSaUr2}, the authors constructed a weight pair $\left( \sigma
,\omega \right) $ on the real line which demonstrated that the backward
pivotal condition of NTV was not necessary for boundedness of the Hilbert
transform. This pair was then modified in \cite{SaShUr11}, to demonstrate
failure of necessity of the backward energy condition for boundedness of an
elliptic operator on the line, by `smearing out' the point masses of $\sigma 
$ in order that the backward energy condition became equivalent with the
backward pivotal condition. But this change then destroyed the backward
testing condition for the Hilbert transform, and this necessitated a
flattening of the kernel of the Hilbert transform, along with a delicate
redistribution of the Cantor measure.

In this paper, we instead modify the weight pair $\left( \sigma ,\omega
\right) $ on the real line to obtain a family of weight pairs $\left\{
\left( \widehat{\sigma }_{N},\widehat{\omega }_{N}\right) \right\}
_{N=1}^{\infty }$ in a two-dimensional subspace of $\mathbb{R}^{n}$, which
demonstrate that the energy conditions are not necessary for boundedness of
the vector Riesz transform $\mathbf{R}^{\alpha ,n}$. This modification is
suggested by the above derivation of the energy conditions from the testing
conditions for the family of twisted localizations of $\mathbf{R}^{\alpha
,n} $, and is accomplished by replacing the point masses of $\sigma $ on the
line with a `spread out' pair of point masses extending off the real line
(this is the twist), again resulting in failure of the backward energy
condition. While this spreading out of the point masses in $\sigma $ leaves
intact the testing conditions for the first component $R_{1}^{\alpha ,n}$ of
the Riesz transform, it destroys the backward testing condition for $%
R_{1}^{\alpha ,n}$ - consistent with the fact that the energy conditions 
\textbf{are} necessary for boundedness of $\mathbf{R}^{\alpha ,n}$ when the
measure $\omega $ is supported on a line - see \cite{SaShUr8} and \cite%
{LaSaShUrWi}. In order to circumvent this difficulty, we must carefully
reposition the Cantor measure off the line to occupy the upper and lower
half spaces of $\mathbb{R}^{2}\subset \mathbb{R}^{n}$ in such a way that
point masses associated with the repositioned Cantor measure appear near the
spreadout point masses of $\sigma $. This is needed in order to force zeroes
of the function $R_{1}^{\alpha ,n}\widehat{\omega }_{N}$ to occur where we
want them locally. Since the second component $R_{2}^{\alpha ,n}$ of the
Riesz transform is essentially controlled by the Poisson operator, and the
remaining components $R_{j}^{\alpha ,n}$, $3\leq j\leq n$, vanish on the
supports of these measures, we also obtain the testing conditions for the
remaining $R_{j}^{\alpha ,n}$ when $j\geq 2$.

\section{Construction of the counterexample pair of weights for the Cauchy
operator}

We begin the proof of Theorem \ref{counterexample} with the special case $%
\alpha =1$ in dimension $n=2$, where the components of the fractional Riesz
transform $\mathbf{R}^{1,2}=\left( R_{1}^{1,2},R_{2}^{1,2}\right) $ are the
real and imaginary parts of the Cauchy transform $\mathbf{C}$ with
convolution kernel $\frac{1}{z}$, for $z\in \mathbb{C}$. Note also that the
restriction of the first component $R_{1}^{1,2}$ to the $x$-axis in $\mathbb{%
C}$ is precisely the Hilbert transform $H$ with convolution kernel $\frac{1}{%
x}$ on the real line, which explains the relevance of the one-dimensional
weight pair in \cite{LaSaUr2}. However, it is the additional dimension
available in the plane that allows us to retain boundedness of the operator $%
\mathbf{R}^{1,2}$ while spreading out both measures off the line, and
arranging for the resulting backward energy condition to fail. The general
case $0\leq \alpha <n$ and $n\geq 2$ is considered at the very end of the
paper.

Recall the middle-third Cantor set $\mathsf{E}$ and Cantor measure $\omega $
on the closed unit interval $I_{1}^{0}=\left[ 0,1\right] $. At the $k^{th}$
generation in the construction, there is a collection $\left\{
I_{j}^{k}\right\} _{j=1}^{2^{k}}$ of $2^{k}$ pairwise disjoint closed
intervals of length $\left\vert I_{j}^{k}\right\vert =\frac{1}{3^{k}}$. With 
$K_{k}=\bigcup_{j=1}^{2^{k}}I_{j}^{k}$, the Cantor set is defined by $%
\mathsf{E}=\bigcap_{k=1}^{\infty }K_{k}=\bigcap_{k=1}^{\infty }\left(
\bigcup_{j=1}^{2^{k}}I_{j}^{k}\right) $. The Cantor measure $\omega $ is the
unique probability measure supported in $\mathsf{E}$ with the property that
it is equidistributed among the intervals $\left\{ I_{j}^{k}\right\}
_{j=1}^{2^{k}}$ at each scale $k$, i.e.%
\begin{equation}
\omega (I_{j}^{k})=2^{-k},\ \ \ \ \ k\geq 0,1\leq j\leq 2^{k}.
\label{omega measure}
\end{equation}%
Let $G_{j}^{k}=\left( a_{j}^{k},b_{j}^{k}\right) $ be the open middle third
of $I_{j}^{k}$ and let $\left( I_{j}^{k}\right) _{\limfunc{left}}$ denote
the interval $I_{j_{1}}^{k+1}$ with $j_{1}=2j-1$ that has right hand
endpoint equal to $a_{j}^{k}$, and more generally let $\left\{ I_{j_{\ell
}}^{k+\ell }\right\} _{\ell =1}^{\infty }$ be the tower of intervals with
right hand endpoint $a_{j}^{k}$. Similarly, let $\left( I_{j}^{k}\right) _{%
\limfunc{right}}$ denote the interval $I_{j_{1}+1}^{k+1}=I_{2j}^{k+1}$ that
has left hand endpoint equal to $b_{j}^{k}$, and let $\left\{ I_{j_{\ell
}+1}^{k+\ell }\right\} _{\ell =1}^{\infty }$ be the tower of intervals with
left hand endpoint $b_{j}^{k}$. Let $c_{i}^{k}\in G_{i}^{k}$ be the center
of the interval $G_{i}^{k}=\left( a_{i}^{k},b_{i}^{k}\right) $, which is
also the center of the interval $I_{i}^{k}$.

Now we recall from \cite{LaSaUr2} an important property of the Hilbert
transform $H$ with respect to the Cantor measure $\omega $. We use the
pairwise disjoint decomposition $I_{j_{1}}^{k+1}=\overset{\cdot }{\bigcup }%
_{\ell =1}^{\infty }\left( I_{j_{\ell }}^{k+\ell }\right) _{\limfunc{left}}$
to compute%
\begin{equation*}
H\left( \mathbf{1}_{\left( I_{j}^{k}\right) _{\limfunc{left}}}\omega \right)
\left( a_{j}^{k}\right) =\int_{I_{j_{1}}^{k+1}}\frac{1}{y-a_{j}^{k}}d\omega
\left( y\right) =\sum_{\ell =1}^{\infty }\int_{\left( I_{j_{\ell }}^{k+\ell
}\right) _{\limfunc{left}}}\frac{1}{y-a_{j}^{k}}d\omega \left( y\right) ,
\end{equation*}%
and hence the estimate%
\begin{equation*}
H\left( \mathbf{1}_{\left( I_{j}^{k}\right) _{\limfunc{left}}}\omega \right)
\left( a_{j}^{k}\right) \approx -\sum_{\ell =1}^{\infty }\frac{\left\vert
\left( I_{j_{\ell }}^{k+\ell }\right) _{\limfunc{left}}\right\vert _{\omega }%
}{\left\vert I_{j_{\ell }}^{k+\ell }\right\vert }d\omega \left( y\right)
=-\sum_{\ell =1}^{\infty }\frac{2^{-k-\ell }}{3^{-k-\ell }}=-\sum_{\ell
=1}^{\infty }\left( \frac{3}{2}\right) ^{k+\ell }=-\infty .
\end{equation*}%
Since $H\left( \mathbf{1}_{\left( I_{j}^{k}\right) _{\limfunc{left}%
}^{c}}\omega \right) \left( a_{j}^{k}\right) \lesssim \frac{1}{3^{-k}}%
<\infty $, we conclude that $H\omega \left( a_{j}^{k}\right) =-\infty $, and
similarly $H\omega \left( b_{j}^{k}\right) =\infty $. Thus $H\omega \left(
x\right) $ increases from $-\infty $ to $\infty $ on the interval $G_{j}^{k}$%
. We will later arrange for a similar result to hold for the first component 
$R_{1}^{1,2}$ of the Riesz transform with respect to a modification of $%
\omega $ into the plane.

We now extend certain approximations $\omega _{N}$ of the Cantor measure $%
\omega $ to the plane in the following way. Fix $N\in \mathbb{N}$. Recall
that $K_{N}=\bigcup_{j=1}^{2^{N}}I_{j}^{N}$ and that 
\begin{equation*}
I_{j}^{N}=I_{2j-1}^{N+1}\dot{\cup}G_{j}^{N}\dot{\cup}I_{2j}^{N+1}\equiv I_{j,%
\limfunc{left}}^{N}\dot{\cup}G_{j}^{N}\dot{\cup}I_{j,\limfunc{right}}^{N}\ .
\end{equation*}%
The Cantor measure $\omega $ charges each interval $I_{j,\limfunc{left}}^{N}$
and $I_{j,\limfunc{right}}^{N}$ with the same mass, namely $\left\vert I_{j,%
\limfunc{left}}^{N}\right\vert _{\omega }=\left\vert I_{j,\limfunc{right}%
}^{N}\right\vert _{\omega }=2^{-\left( N+1\right) }$, and we now define the
discrete approximation $\omega _{N}$ by%
\begin{equation*}
\omega _{N}\equiv \sum_{j=1}^{2^{N}}2^{-N-1}\left( \delta _{c_{j,\limfunc{%
left}}^{N}}+\delta _{c_{j,\limfunc{right}}^{N}}\right) ,
\end{equation*}%
where we have relabelled the intervals $I_{j,\limfunc{left}%
}^{N}=I_{2j-1}^{N+1}$ and $I_{j,\limfunc{right}}^{N}=I_{2j}^{N+1}$, and have
denoted their centers by $c_{j,\limfunc{left}}^{N}=c_{2j}^{N+1}$ and $c_{j,%
\limfunc{right}}^{N}=c_{2j}^{N+1}$ respectively.

We now embed the point mass $\delta _{c}$ on $\mathbb{R}$ as $\delta
_{\left( c,0\right) }$in the plane $\mathbb{R}^{2}$, and split each of the
point masses $\delta _{c_{j,\limfunc{left}}^{N}},\delta _{c_{j,\limfunc{right%
}}^{N}}$ for $1\leq j\leq 2^{N}$ into a sum of two point masses located at
equal distances $d_{j,\limfunc{left}}^{N}$ and $d_{j,\limfunc{right}}^{N}$
above and below the points $c_{j,\limfunc{left}}^{N}-d_{j,\limfunc{left}%
}^{N} $ and $c_{j,\limfunc{right}}^{N}+d_{j,\limfunc{right}}^{N}$
respectively. For $\delta _{c_{j,\limfunc{left}}^{N}}=\delta
_{c_{2j-1}^{N+1}}$ we define $d_{j,\limfunc{left}}^{N}$ to be one half the
length of $I_{j,\limfunc{left}}^{N}$ plus one quarter the length of the
neighbouring open middle third $G_{i}^{k}$ to the left of $I_{j,\limfunc{left%
}}^{N}$, i.e. 
\begin{equation*}
d_{j,\limfunc{left}}^{N}=\frac{1}{2}3^{-N-1}+\frac{1}{4}3^{-k-1}.
\end{equation*}%
Note that $0\leq k\leq N-1$, and that the neighbouring open middle third to
the \emph{right} of $I_{j,\limfunc{left}}^{N}$ is simply $G_{j}^{N}$.
Similarly, we define $d_{j,\limfunc{right}}^{N}$ to be one half the length
of $I_{j,\limfunc{right}}^{N}$ plus one quarter the length of the
neighbouring open middle third $G_{i^{\prime }}^{k^{\prime }}$ to the right
of $I_{j,\limfunc{right}}^{N}$, i.e. 
\begin{equation*}
d_{j,\limfunc{right}}^{N}=\frac{1}{2}3^{-N-1}+\frac{1}{4}3^{-k^{\prime }-1},
\end{equation*}%
where again $0\leq k^{\prime }\leq N-1$, and the neighbouring open middle
third to the \emph{left} of $I_{j,\limfunc{right}}^{N}$ is again $G_{j}^{N}$%
. Note that we have defined the lengths $d_{j,\limfunc{left}}^{N}$ and $d_{j,%
\limfunc{right}}^{N}$ so that%
\begin{eqnarray}
c_{j,\limfunc{left}}^{N}-d_{j,\limfunc{left}}^{N} &=&c_{i}^{k}+\frac{1}{4}%
3^{-k-1},  \label{left and right} \\
c_{j,\limfunc{right}}^{N}+d_{j,\limfunc{right}}^{N} &=&c_{i}^{k^{\prime }}-%
\frac{1}{4}3^{-k^{\prime }-1}.  \notag
\end{eqnarray}

We now define 
\begin{eqnarray*}
\widehat{\omega }_{N} &\equiv &\sum_{j=1}^{2^{N}}2^{-N-1}\left( \frac{\delta
_{\left( c_{j,\limfunc{left}}^{N}-d_{j,\limfunc{left}}^{N},\frac{1}{4}%
3^{-k-1}\right) }+\delta _{\left( c_{j,\limfunc{left}}^{N}-d_{j,\limfunc{left%
}}^{N},-\frac{1}{4}3^{-k-1}\right) }}{2}\right) \\
&&+\sum_{j=1}^{2^{N}}2^{-N-1}\left( \frac{\delta _{\left( c_{j,\limfunc{right%
}}^{N}+d_{j,\limfunc{right}}^{N},\frac{1}{4}3^{-k^{\prime }-1}\right)
}+\delta _{\left( c_{j,\limfunc{right}}^{N}+d_{j,\limfunc{right}}^{N},-\frac{%
1}{4}3^{-k^{\prime }-1}\right) }}{2}\right) .
\end{eqnarray*}%
Note in particular that the point mass $\delta _{\left( c_{j,\limfunc{left}%
}^{N},0\right) }$ has been replaced with the average of two point masses
whose locations in the plane, $\left( c_{j,\limfunc{left}}^{N}-d_{j,\limfunc{%
left}}^{N},\frac{1}{4}3^{-k-1}\right) $ and $\left( c_{j,\limfunc{left}%
}^{N}-d_{j,\limfunc{left}}^{N},-\frac{1}{4}3^{-k-1}\right) $, lie at less
than $45^{\circ }$ angles from $c_{j,\limfunc{left}}^{N}$ extending to the
left in the upper and lower half planes respectively. In similar fashion,
the point mass $\delta _{\left( c_{j,\limfunc{right}}^{N},0\right) }$ has
been replaced with the average of two point masses whose locations in the
plane, $\left( c_{j,\limfunc{right}}^{N}+d_{j,\limfunc{right}}^{N},\frac{1}{4%
}3^{-k^{\prime }-1}\right) $ and $\left( c_{j,\limfunc{right}}^{N}+d_{j,%
\limfunc{right}}^{N},-\frac{1}{4}3^{-k^{\prime }-1}\right) $, lie at less
than $45^{\circ }$ angles from $c_{j,\limfunc{left}}^{N}$ extending to the
right into the upper and lower half planes respectively.

The point of incorporating these less than $45^{\circ }$ angle translations
of locations is to obtain the following crucial property for all pairs of
points $y=\left( y_{1},y_{2}\right) $ and $z=\left( z_{1},z_{2}\right) $ in
the support of $\widehat{\omega }_{N}$ with $y_{1}\neq z_{1}$:%
\begin{equation}
\left\vert y_{2}-z_{2}\right\vert \leq \left\vert y_{1}-z_{1}\right\vert 
\text{ for all }y,z\text{ such that }y_{1}\neq z_{1}\text{, }\widehat{\omega 
}_{N}\left( y\right) \neq 0\text{ and }\widehat{\omega }_{N}\left( z\right)
\neq 0.  \label{slope}
\end{equation}%
This property is evident from another useful description of these measures
that derives from an extension of the observation that the intervals $I_{j,%
\limfunc{left}}^{N}$ and $I_{j,\limfunc{right}}^{N}$ are the left and right
neighbours of $G_{j}^{N}$ at level $N$. More precisely, the support of $%
\widehat{\omega }_{N}$ is contained in the union $\bigcup_{k=0}^{N-1}%
\bigcup_{i=1}^{2^{k}}\widehat{G_{i}^{k}}$ of the squares $\widehat{G_{i}^{k}}%
=G_{i}^{k}\times \left( \frac{1}{2}3^{-k-1},-\frac{1}{2}3^{-k-1}\right) $
corresponding to the open middle thirds of the intervals $I_{i}^{k}$ up to
level $N-1$. Moreover, for each $G_{i}^{k}$ with $0\leq k\leq N-1$ and $%
1\leq i\leq 2^{k}$, there are exactly four point masses from $\omega _{N}$
contained in $G_{i}^{k}$, and by (\ref{left and right}), they are located at
the points $\left( c_{i}^{k}\pm \frac{1}{4}3^{-k-1},\pm \frac{1}{4}%
3^{-k-1}\right) $. Thus we can rewrite $\widehat{\omega }_{N}$ as 
\begin{eqnarray}
\widehat{\omega }_{N} &=&2^{-N-1}\sum_{k=0}^{N-1}\sum_{i=1}^{2^{k}}\left( 
\frac{\delta _{\left( c_{i}^{k}+\frac{1}{4\cdot 3^{k+1}},\frac{1}{4\cdot
3^{k+1}}\right) }+\delta _{\left( c_{i}^{k}+\frac{1}{4\cdot 3^{k+1}},-\frac{1%
}{4\cdot 3^{k+1}}\right) }}{2}\right)  \label{representation} \\
&&+2^{-N-1}\sum_{k=0}^{N-1}\sum_{i=1}^{2^{k}}\left( \frac{\delta _{\left(
c_{i}^{k}-\frac{1}{4\cdot 3^{k+1}},\frac{1}{4\cdot 3^{k+1}}\right) }+\delta
_{\left( c_{i}^{k}-\frac{1}{4\cdot 3^{k+1}},-\frac{1}{4\cdot 3^{k+1}}\right)
}}{2}\right) .  \notag
\end{eqnarray}%
A simple picture in the plane of the support of $\widehat{\omega }_{N}$
using this representation of $\widehat{\omega }_{N}$ demonstrates the
property (\ref{slope}). Indeed, the slopes of the lines joining pairs of the
six points consisting of the four point supports of $\widehat{\omega }_{N}$
in $G_{i}^{k}$, namely $\left( c_{i}^{k}\pm \frac{1}{4\cdot 3^{k+1}},\pm 
\frac{1}{4\cdot 3^{k+1}}\right) $, and the two `endpoints' of $%
G_{i}^{k}\times \left\{ 0\right\} $, namely $\left( c_{i}^{k}\pm \frac{1}{%
2\cdot 3^{k+1}},0\right) $, are either infinite or at most $1$ in modulus.
In fact, the slopes of segments joining pairs of points in $\limfunc{supp}%
\widehat{\omega }_{N}$ are strictly less than $1$ in modulus unless the pair
of points lie in a common square $\widehat{G_{i}^{k}}$ on opposite sides of
the $x_{1}$-axis.

We will now define three measures $\widehat{\dot{\sigma}}_{N},\widehat{%
\sigma }_{N},\widehat{\sigma }_{N}^{+}$ in the plane loosely motivated by
the two measures $\dot{\sigma},\sigma $ on the line constructed in \cite%
{LaSaUr2}. Recall that $G_{i}^{k}$ is the removed open middle third of $%
I_{i}^{k}$. The measure $\widehat{\dot{\sigma}}$, restricted to a square $%
\widehat{G_{i}^{k}}$, will consist of a multiple of the single point mass $%
\delta _{\left( c_{i}^{k},0\right) }$ located on the real axis at the center 
$c_{i}^{k}$ of $G_{i}^{k}$, while the measure $\widehat{\sigma }$,
restricted to a square $\widehat{G_{i}^{k}}$, will consist of multiples of
two point masses lying equidistant above and below the real axis. The
measure $\widehat{\sigma }_{N}^{+}$ will be the restriction of $\widehat{%
\sigma }_{N}$ to the upper half plane. We now turn to describing these
measures explicitly.

Let $H_{i}^{k}=\frac{1}{2}G_{i}^{k}$ be the middle half of the open middle
third $G_{i}^{k}$ of $I_{i}^{k}$ (in other words the open middle sixth of $%
I_{i}^{k}$), and note that by construction $\widehat{\omega }_{N}$ does not
charge the open rectangle $H_{i}^{k}\times \mathbb{R}$. On the other hand
there are four points $\left( c_{i}^{k}\pm \frac{1}{4}3^{-k-1},\pm \frac{1}{4%
}3^{-k-1}\right) $ in the support of $\widehat{\omega }_{N}$ that lie on the
boundary of the strip $H_{i}^{k}\times \mathbb{R}$, two on the left edge and
two on the right edge. If we let $H_{i}^{k}=\left[ u_{i}^{k},v_{i}^{k}\right]
$, then these four points are $P_{i,\pm }^{k}=\left( u_{i}^{k},\pm \frac{1}{%
4\cdot 3^{k+1}}\right) $ and $Q_{i,\pm }^{k}=\left( v_{i}^{k},\pm \frac{1}{%
4\cdot 3^{k+1}}\right) $.

It is convenient to also define the minimal closed intervals $%
L_{j}^{k}\varsupsetneqq I_{j}^{k}$ so that the closed square $\widehat{%
L_{j}^{k}}=L_{j}^{k}\times \left[ \frac{1}{2}3^{-k-1},-\frac{1}{2}3^{-k-1}%
\right] $ contains all the point masses in $\widehat{\omega }_{N}$ that were
constructed from the point masses of $\omega _{N}$ lying in $I_{j}^{k}$ by
the procedure of splitting into two point masses. In other words, if $%
I_{j}^{k}$ is adjacent to $G_{i}^{\ell }$ on the left and to $G_{i^{\prime
}}^{k-1}$ on the right, then $L_{j}^{k}\equiv \left[ c_{i}^{\ell }+\frac{1}{%
4\cdot 3^{\ell +1}},c_{i^{\prime }}^{k-1}-\frac{1}{4\cdot 3^{k+1}}\right] $;
while if $I_{j}^{k}$ is adjacent to $G_{i}^{k-1}$ on the left and to $%
G_{i^{\prime }}^{\ell ^{\prime }}$ on the right, then $L_{j}^{k}\equiv \left[
c_{i}^{k-1}+\frac{1}{4\cdot 3^{k+1}},c_{i^{\prime }}^{\ell ^{\prime }-1}-%
\frac{1}{4\cdot 3^{\ell ^{\prime }+1}}\right] $. Thus $L_{j}^{k}$ sticks out
beyond $I_{j}^{k}$ on each side a distance $\frac{1}{4}\left\vert
G_{r}^{\ell }\right\vert $ determined by the length of the adjacent middle
third $G_{r}^{\ell }$ on that side.

Recall the $A_{2}^{1}$ condition in the plane $\mathbb{R}^{2}\,$:%
\begin{equation*}
A_{2}^{1}\left( \widehat{\dot{\sigma}},\widehat{\omega }\right) \equiv
\sup_{Q\in \mathcal{P}^{2}}\frac{\left\vert Q\right\vert _{\widehat{\omega }%
_{N}}}{\left\vert Q\right\vert ^{1-\frac{1}{2}}}\frac{\left\vert
Q\right\vert _{\widehat{\dot{\sigma}}_{N}}}{\left\vert Q\right\vert ^{1-%
\frac{1}{2}}}.
\end{equation*}

\begin{notation}
For an interval $I$ denote by $\widehat{I}$ the square $I\times \left[ -%
\frac{1}{2}\left\vert I\right\vert ,\left\vert I\right\vert \right] $.
\end{notation}

Define 
\begin{equation}
\widehat{\dot{\sigma}}_{N}=\sum_{k=0}^{N-1}\sum_{i=1}^{2^{k}}s_{i}^{k}\delta
_{\left( c_{i}^{k},0\right) },  \label{e.sigmaD}
\end{equation}%
where the sequence of positive numbers $s_{i}^{k}$ is chosen to satisfy the
following precursor of the $A_{2}^{1}$ condition involving the squares $%
\widehat{L_{i}^{k}}\equiv L_{i}^{k}\times \left[ -\frac{1}{2}\left\vert
L_{i}^{k}\right\vert ,\frac{1}{2}\left\vert L_{i}^{k}\right\vert \right] $:%
\begin{equation*}
\frac{s_{i}^{k}\widehat{\omega }_{N}(\widehat{L_{i}^{k}})}{|\widehat{%
L_{i}^{k}}|^{2\left( 1-\frac{1}{2}\right) }}=\frac{s_{i}^{k}\omega
_{N}(I_{i}^{k})}{\left\vert L_{i}^{k}\right\vert ^{2}}\approx \frac{%
s_{i}^{k}2^{-k}}{3^{-2k}}=1.
\end{equation*}%
Note that we also have a similar estimate for the squares $\widehat{I_{i}^{k}%
}$,%
\begin{equation*}
\frac{\widehat{\omega }_{N}(\widehat{I_{i}^{k}})}{|\widehat{I_{i}^{k}}%
|^{2\left( 1-\frac{1}{2}\right) }}=\frac{s_{i}^{k}\widehat{\omega }%
_{N}(I_{i}^{k}\times \left[ -\frac{1}{2}3^{-k},\frac{1}{2}3^{-k}\right] )}{%
\left\vert I_{i}^{k}\right\vert ^{2}}\approx \frac{2^{-k}}{3^{-2k}}\approx 
\frac{\widehat{\omega }_{N}(\widehat{L_{i}^{k}})}{|\widehat{L_{i}^{k}}%
|^{2\left( 1-\frac{1}{2}\right) }},
\end{equation*}%
since $\widehat{\omega }_{N}(I_{i}^{k}\times \left[ -\frac{1}{2}3^{-k},\frac{%
1}{2}3^{-k}\right] \approx 2^{-k}$ for $0\leq k\leq N-1$ because only a
fixed proportion of the mass of $\omega _{N}$ escapes $\widehat{I_{i}^{k}}$
when the point masses in $\omega _{N}$ at the extreme left and right inside $%
I_{i}^{k}$ were spread out at less than $45^{\circ }$ angles away from $%
I_{i}^{k}$ into the upper and lower half planes. Thus we define%
\begin{equation*}
s_{i}^{k}=\frac{2^{k}}{3^{2k}}=\left( \frac{1}{3}\right) ^{k}\left( \frac{2}{%
3}\right) ^{k}\qquad k\geq 0,1\leq i\leq 2^{k},
\end{equation*}%
which agrees with the weights $s_{i}^{k}$ used in \cite{LaSaUr2} when $%
\alpha =0$ and $n=1$. The definition of the measures $\widehat{\sigma }_{N}$
and $\widehat{\sigma }_{N}^{+}$ will depend on the fractional Riesz
transform $\mathbf{R}^{1,2}$ with convolution kernel $\mathbf{K}^{1,2}\left(
\xi \right) =\frac{\xi }{\left\vert \xi \right\vert ^{2}}$, and is closely
related to the structure of the function $R_{1}^{1,2}\widehat{\omega }_{N}$,
where $\mathbf{R}^{1,2}=\left( R_{1}^{1,2},R_{2}^{1,2}\right) $.

We focus on the kernel%
\begin{equation*}
K_{1}^{1,2}\left( \xi \right) =\frac{\xi _{1}}{\left\vert \xi \right\vert
^{2}}=\frac{\xi _{1}}{\xi _{1}^{2}+\xi _{2}^{2}},
\end{equation*}%
and the four points $P_{i,\pm }^{k}=\left( u_{i}^{k},\pm \frac{1}{4\cdot
3^{k+1}}\right) $ and $Q_{i,\pm }^{k}=\left( v_{i}^{k},\pm \frac{1}{4\cdot
3^{k+1}}\right) $ that are the vertices of the square $\widehat{H_{i}^{k}}$.
Fix a horizontal segment $H_{i}^{k}\times \left\{ x_{2}\right\} $ with $%
x_{2}\in \left\{ \pm \frac{1}{4\cdot 3^{k+1}}\right\} $, i.e. either the top
or bottom edge of the square $\widehat{H_{i}^{k}}$. Then the function 
\begin{equation*}
F\left( x_{1}\right) \equiv R_{1}^{1,2}\widehat{\omega }_{N}\left(
x_{1},x_{2}\right) =\int K_{1}^{1,2}d\widehat{\omega }_{N}=\int_{-1}^{1}%
\int_{-1}^{1}\frac{x_{1}-y_{1}}{\left( x_{1}-y_{1}\right) ^{2}+\left(
x_{2}-y_{2}\right) ^{2}}d\widehat{\omega }_{N}\left( y_{1},y_{2}\right)
\end{equation*}%
is monotonically decreasing for $x_{1}$ in $\left[ u_{i}^{k},v_{i}^{k}\right]
$ from the value 
\begin{equation*}
F\left( u_{i}^{k}\right) =\int_{-1}^{1}\int_{-1}^{1}\frac{u_{i}^{k}-y_{1}}{%
\left\vert u_{i}^{k}-y_{1}\right\vert ^{2}+\left\vert x_{2}-y_{2}\right\vert
^{2}}d\widehat{\omega }_{N}\left( y_{1},y_{2}\right)
\end{equation*}%
at the left hand endpoint of $G_{i}^{k}=\left[ u_{i}^{k},v_{i}^{k}\right] $,
to the value%
\begin{equation*}
F\left( v_{i}^{k}\right) =\int_{-1}^{1}\int_{-1}^{1}\frac{v_{i}^{k}-y_{1}}{%
\left\vert v_{i}^{k}-y_{1}\right\vert ^{2}+\left\vert x_{2}-y_{2}\right\vert
^{2}}d\widehat{\omega }_{N}\left( y_{1},y_{2}\right)
\end{equation*}%
at the right hand endpoint.

Indeed, to see this, fix $y=\left( y_{1},y_{2}\right) \in \limfunc{supp}%
\widehat{\omega }_{N}\setminus \left\{ P_{i,\pm }^{k},Q_{i,\pm }^{k}\right\} 
$. Then using (\ref{slope}) it is easy to see that $\left\vert
x_{2}-y_{2}\right\vert <\left\vert x_{1}-y_{1}\right\vert $ for all $%
x_{1}\in H_{j}^{k}$, and so%
\begin{eqnarray*}
\frac{d}{dx_{1}}\frac{x_{1}-y_{1}}{\left( x_{1}-y_{1}\right) ^{2}+\left(
x_{2}-y_{2}\right) ^{2}} &=&\frac{\left( x_{1}-y_{1}\right) ^{2}+\left(
x_{2}-y_{2}\right) ^{2}-2\left( x_{1}-y_{1}\right) ^{2}}{\left[ \left(
x_{1}-y_{1}\right) ^{2}+\left( x_{2}-y_{2}\right) ^{2}\right] ^{2}} \\
&=&\frac{\left( x_{2}-y_{2}\right) ^{2}-\left( x_{1}-y_{1}\right) ^{2}}{%
\left[ \left( x_{1}-y_{1}\right) ^{2}+\left( x_{2}-y_{2}\right) ^{2}\right]
^{2}}<0.
\end{eqnarray*}%
Now consider the integral corresponding to the sum of the two points $%
\left\{ P_{i,\pm }^{k}\right\} $ in the support of $\widehat{\omega }_{N}$.
This integral is a positive multiple of the following sum:%
\begin{eqnarray*}
&&\frac{x_{1}-u_{i}^{k}}{\left( x_{1}-u_{i}^{k}\right) ^{2}+\left( \frac{1}{%
4\cdot 3^{k+1}}-\frac{1}{4\cdot 3^{k+1}}\right) ^{2}}+\frac{x_{1}-u_{i}^{k}}{%
\left( x_{1}-u_{i}^{k}\right) ^{2}+\left( \frac{1}{4\cdot 3^{k+1}}+\frac{1}{%
4\cdot 3^{k+1}}\right) ^{2}} \\
&&\ \ \ \ \ \ \ \ \ \ \equiv \frac{1}{t}+\frac{t}{t^{2}+A^{2}}\text{,\ \ \ \
\ }t=x_{1}-u_{i}^{k}\text{ and }A=\frac{1}{2\cdot 3^{k+1}},
\end{eqnarray*}%
where 
\begin{equation*}
\frac{d}{dt}\left( \frac{1}{t}+\frac{t}{t^{2}+A^{2}}\right) =-\frac{1}{t^{2}}%
+\frac{1}{t^{2}+A^{2}}-\frac{2t^{2}}{\left( t^{2}+A^{2}\right) ^{2}}=-\frac{%
A^{2}+t^{4}}{t^{2}\left( t^{2}+A^{2}\right) ^{2}}<0.
\end{equation*}%
Similarly, the integral corresponding to the sum of the two points $\left\{
Q_{i,\pm }^{k}\right\} $ is a positive multiple of the sum%
\begin{eqnarray*}
&&\frac{x_{1}-v_{i}^{k}}{\left( x_{1}-v_{i}^{k}\right) ^{2}+\left( \frac{1}{%
4\cdot 3^{k+1}}-\frac{1}{4\cdot 3^{k+1}}\right) ^{2}}+\frac{x_{1}-v_{i}^{k}}{%
\left( x_{1}-v_{i}^{k}\right) ^{2}+\left( \frac{1}{4\cdot 3^{k+1}}+\frac{1}{%
4\cdot 3^{k+1}}\right) ^{2}} \\
&&\ \ \ \ \ \ \ \ \ \ \equiv \frac{1}{t}+\frac{t}{t^{2}+A^{2}}\text{,\ \ \ \
\ }t=x_{1}-v_{i}^{k}\text{ and }A=\frac{1}{2\cdot 3^{k+1}},
\end{eqnarray*}%
whose $t$ derivative was shown above to be negative. This completes the
proof that $F\left( x_{1}\right) $ is monotonically decreasing for $x_{1}$
in $\left[ u_{i}^{k},v_{i}^{k}\right] $.

Now we have $\lim_{x_{1}\searrow u_{i}^{k}}F\left( x_{1}\right) =\infty $
since the integrand $\frac{x_{1}-y_{1}}{\left\vert x_{1}-y_{1}\right\vert
^{2}+\left\vert x_{2}-y_{2}\right\vert ^{2}}=\nearrow \infty $ as $\left(
x_{1},\pm \frac{1}{4\cdot 3^{k+1}}\right) \rightarrow P_{i,\pm }^{k}=\left(
u_{i}^{k},\pm \frac{1}{4\cdot 3^{k+1}}\right) \in \limfunc{supp}\widehat{%
\omega }_{N}$. Similarly, $\lim_{x_{1}\nearrow v_{j}^{k}}F\left(
x_{1}\right) =-\infty $, and we conclude that $F\left( x_{1}\right) \equiv
R_{1}^{1,2}\widehat{\omega }_{N}\left( x_{1},x_{2}\right) $ strictly
decreases from $\infty $ to $-\infty $ along the horizontal segment $%
H_{i}^{k}\times \left\{ x_{2}\right\} $ when $x_{2}\in \left\{ \pm \frac{1}{%
4\cdot 3^{k+1}}\right\} $. In particular, $R_{1}^{1,2}\widehat{\omega }_{0}$
has a unique zero $z_{i}^{k}$ on the horizontal segment $H_{i}^{k}\times
\left\{ x_{2}\right\} $, which is independent of the two choices $x_{2}=\pm 
\frac{1}{4\cdot 3^{k+1}}$ by symmetry. With this choice of $z_{i}^{k}$, we
define the measures $\widehat{\sigma }_{N}$ and $\widehat{\sigma }_{N}^{+}$\
by 
\begin{eqnarray*}
\widehat{{\sigma }}_{N} &=&\sum_{k,i}s_{i}^{k}\left( \delta _{\left(
z_{i}^{k},\frac{1}{4\cdot 3^{k+1}}\right) }+\delta _{\left( z_{i}^{k},-\frac{%
1}{4\cdot 3^{k+1}}\right) }\right) , \\
\widehat{{\sigma }}_{N}^{+} &=&\sum_{k,i}s_{i}^{k}\delta _{\left( z_{i}^{k},%
\frac{1}{4\cdot 3^{k+1}}\right) }.
\end{eqnarray*}

\subsection{An estimate for the second component}

From the representation (\ref{representation}) of $\widehat{\omega }_{N}$,
it is clear that there are $2N$ horizontal lines on which $\widehat{\omega }%
_{N}$ is supported, namely the lines $\left\{ L_{\beta }\right\} _{\beta \in
\left\{ \pm \frac{1}{4\cdot 3^{k+1}}\right\} _{k=0}^{N-1}}$ where $L_{\beta
}\equiv \left\{ \left( x,\beta \right) \in \mathbb{R}^{2}:x\in \mathbb{R}%
\right\} $ is the $x$-axis translated vertically by $\beta $. We now
estimate the second component $R_{2}^{1,2}\widehat{\omega }_{N}$ of the
Riesz transform of $\widehat{\omega }_{N}$ on the support of the measure $%
\widehat{{\sigma }}_{N}^{+}$. So fix a point $\left( z_{j}^{\ell },\frac{1}{%
4\cdot 3^{\ell +1}}\right) $ in the strip $H_{j}^{\ell }\times \mathbb{R}$
that lies in the support of $\widehat{{\sigma }}_{N}^{+}$ with $N\geq 3$.
Then we have%
\begin{eqnarray*}
&&R_{2}^{1,2}\widehat{\omega }_{N}\left( z_{j}^{\ell },\frac{1}{4\cdot
3^{\ell +1}}\right) =\int_{-1}^{1}\int_{-1}^{1}\frac{\frac{1}{4\cdot 3^{\ell
+1}}-y_{2}}{\left\vert z_{j}^{\ell }-y_{1}\right\vert ^{2}+\left\vert \frac{1%
}{4\cdot 3^{\ell +1}}-y_{2}\right\vert ^{2}}d\widehat{\omega }_{N}\left(
y_{1},y_{2}\right) \\
&=&2^{-N-2}\sum_{k=0}^{N-1}\sum_{i=1}^{2^{k}}\left\{ \frac{\frac{1}{4\cdot
3^{\ell +1}}-\frac{1}{4\cdot 3^{k+1}}}{\left\vert z_{j}^{\ell }-\left( c_{%
\limfunc{left}}^{N+1}\left( G_{i}^{k}\right) +\frac{1}{4\cdot 3^{k+1}}%
\right) \right\vert ^{2}+\left\vert \frac{1}{4\cdot 3^{\ell +1}}-\frac{1}{%
4\cdot 3^{k+1}}\right\vert ^{2}}\right. \\
&&\ \ \ \ \ \ \ \ \ \ \ \ \ \ \ \ \ \ \ \ \ \ \ \ \ \ \ \ \ \ +\left. \frac{%
\frac{1}{4\cdot 3^{\ell +1}}+\frac{1}{4\cdot 3^{k+1}}}{\left\vert
z_{j}^{\ell }-\left( c_{\limfunc{left}}^{N+1}\left( G_{i}^{k}\right) +\frac{1%
}{4\cdot 3^{k+1}}\right) \right\vert ^{2}+\left\vert \frac{1}{4\cdot 3^{\ell
+1}}+\frac{1}{4\cdot 3^{k+1}}\right\vert ^{2}}\right\} \\
&&+2^{-N-2}\sum_{k=0}^{N-1}\sum_{i^{\prime }=1}^{2^{k}}\left\{ \frac{\frac{1%
}{4\cdot 3^{\ell +1}}-\frac{1}{4\cdot 3^{k+1}}}{\left\vert z_{j}^{\ell
}-\left( c_{\limfunc{right}}^{N+1}\left( G_{i}^{k}\right) -\frac{1}{4\cdot
3^{k+1}}\right) \right\vert ^{2}+\left\vert \frac{1}{4\cdot 3^{\ell +1}}-%
\frac{1}{4\cdot 3^{k+1}}\right\vert ^{2}}\right. \\
&&\ \ \ \ \ \ \ \ \ \ \ \ \ \ \ \ \ \ \ \ \ \ \ \ \ \ \ \ \ \ +\left. \frac{%
\frac{1}{4\cdot 3^{\ell +1}}+\frac{1}{4\cdot 3^{k+1}}}{\left\vert
z_{j}^{\ell }-\left( c_{\limfunc{right}}^{N+1}\left( G_{i}^{k}\right) -\frac{%
1}{4\cdot 3^{k+1}}\right) \right\vert ^{2}+\left\vert \frac{1}{4\cdot
3^{\ell +1}}+\frac{1}{4\cdot 3^{k+1}}\right\vert ^{2}}\right\} .
\end{eqnarray*}%
The negative terms are those with numerator $\frac{1}{4\cdot 3^{\ell +1}}-%
\frac{1}{4\cdot 3^{k+1}}$ for $0\leq k\leq \ell $, and the sum of those
terms corresponding to the left edge of $G_{i}^{k}$ is%
\begin{equation*}
2^{-N-2}\sum_{k=0}^{\ell }\sum_{i=1}^{2^{k}}\frac{\frac{1}{4\cdot 3^{\ell +1}%
}-\frac{1}{4\cdot 3^{k+1}}}{\left\vert z_{j}^{\ell }-\left( c_{\limfunc{left}%
}^{N+1}\left( G_{i}^{k}\right) +\frac{1}{4\cdot 3^{k+1}}\right) \right\vert
^{2}+\left\vert \frac{1}{4\cdot 3^{\ell +1}}-\frac{1}{4\cdot 3^{k+1}}%
\right\vert ^{2}}.
\end{equation*}%
The analogous sum of positive parts for $0\leq k\leq \ell $ corresponding to
the left edge of $G_{i}^{k}$ is given by%
\begin{equation*}
2^{-N-2}\sum_{k=0}^{\ell }\sum_{i=1}^{2^{k}}\frac{\frac{1}{4\cdot 3^{\ell +1}%
}+\frac{1}{4\cdot 3^{k+1}}}{\left\vert z_{j}^{\ell }-\left( c_{\limfunc{left}%
}^{N+1}\left( G_{i}^{k}\right) +\frac{1}{4\cdot 3^{k+1}}\right) \right\vert
^{2}+\left\vert \frac{1}{4\cdot 3^{\ell +1}}+\frac{1}{4\cdot 3^{k+1}}%
\right\vert ^{2}}.
\end{equation*}%
Adding the two fractions appearing in these sums gives, with $A_{\limfunc{%
left}}^{\left( \ell ,j\right) ,\left( k,i\right) }=z_{j}^{\ell }-\left( c_{%
\limfunc{left}}^{N+1}\left( G_{i}^{k}\right) +\frac{1}{4\cdot 3^{k+1}}%
\right) $, 
\begin{eqnarray*}
&&\frac{\frac{1}{4\cdot 3^{\ell +1}}-\frac{1}{4\cdot 3^{k+1}}}{\left\vert A_{%
\limfunc{left}}^{\left( \ell ,j\right) ,\left( k,i\right) }\right\vert
^{2}+\left\vert \frac{1}{4\cdot 3^{\ell +1}}-\frac{1}{4\cdot 3^{k+1}}%
\right\vert ^{2}}+\frac{\frac{1}{4\cdot 3^{\ell +1}}+\frac{1}{4\cdot 3^{k+1}}%
}{\left\vert A_{\limfunc{left}}^{\left( \ell ,j\right) ,\left( k,i\right)
}\right\vert ^{2}+\left\vert \frac{1}{4\cdot 3^{\ell +1}}+\frac{1}{4\cdot
3^{k+1}}\right\vert ^{2}} \\
&=&\frac{1}{4\cdot 3^{\ell +1}}\left\{ \frac{1}{\left\vert A_{\limfunc{left}%
}^{\left( \ell ,j\right) ,\left( k,i\right) }\right\vert ^{2}+\left\vert 
\frac{1}{4\cdot 3^{\ell +1}}-\frac{1}{4\cdot 3^{k+1}}\right\vert ^{2}}+\frac{%
1}{\left\vert A_{\limfunc{left}}^{\left( \ell ,j\right) ,\left( k,i\right)
}\right\vert ^{2}+\left\vert \frac{1}{4\cdot 3^{\ell +1}}+\frac{1}{4\cdot
3^{k+1}}\right\vert ^{2}}\right\} \\
&&-\frac{1}{4\cdot 3^{k+1}}\left\{ \frac{1}{\left\vert A_{\limfunc{left}%
}^{\left( \ell ,j\right) ,\left( k,i\right) }\right\vert ^{2}+\left\vert 
\frac{1}{4\cdot 3^{\ell +1}}-\frac{1}{4\cdot 3^{k+1}}\right\vert ^{2}}-\frac{%
1}{\left\vert A_{\limfunc{left}}^{\left( \ell ,j\right) ,\left( k,i\right)
}\right\vert ^{2}+\left\vert \frac{1}{4\cdot 3^{\ell +1}}+\frac{1}{4\cdot
3^{k+1}}\right\vert ^{2}}\right\} \\
&\approx &\frac{1}{4\cdot 3^{\ell +1}}\frac{2}{\left\vert A_{\limfunc{left}%
}^{\left( \ell ,j\right) ,\left( k,i\right) }\right\vert ^{2}+\left\vert 
\frac{1}{4\cdot 3^{k+1}}\right\vert ^{2}}-\frac{1}{4\cdot 3^{k+1}}\left\{ 
\frac{4\frac{1}{4\cdot 3^{\ell +1}}\frac{1}{4\cdot 3^{k+1}}}{\left[
\left\vert A_{\limfunc{left}}^{\left( \ell ,j\right) ,\left( k,i\right)
}\right\vert ^{2}+\left\vert \frac{1}{4\cdot 3^{k+1}}\right\vert ^{2}\right]
^{2}}\right\} ,
\end{eqnarray*}%
which equals%
\begin{eqnarray*}
&&\frac{1}{4\cdot 3^{\ell +1}}\left[ \frac{2}{\left\vert A_{\limfunc{left}%
}^{\left( \ell ,j\right) ,\left( k,i\right) }\right\vert ^{2}+\left\vert 
\frac{1}{4\cdot 3^{k+1}}\right\vert ^{2}}-\frac{4}{\left[ \left\vert A_{%
\limfunc{left}}^{\left( \ell ,j\right) ,\left( k,i\right) }\right\vert
^{2}+\left\vert \frac{1}{4\cdot 3^{k+1}}\right\vert ^{2}\right] ^{2}}\right]
\\
&\approx &\frac{1}{4\cdot 3^{\ell +1}}\frac{2}{\left\vert A_{\limfunc{left}%
}^{\left( \ell ,j\right) ,\left( k,i\right) }\right\vert ^{2}+\left\vert 
\frac{1}{4\cdot 3^{k+1}}\right\vert ^{2}}.
\end{eqnarray*}%
Finally, if we sum this over $\sum_{k=0}^{\ell }\sum_{i=1}^{2^{k}}$ and
multiply by $2^{-N-2}$ we get%
\begin{equation*}
2^{-N-2}\frac{1}{4\cdot 3^{\ell +1}}\sum_{k=0}^{\ell }\sum_{i=1}^{2^{k}}%
\frac{2}{\left\vert A_{\limfunc{left}}^{\left( \ell ,j\right) ,\left(
k,i\right) }\right\vert ^{2}+\left\vert \frac{1}{4\cdot 3^{k+1}}\right\vert
^{2}}\approx 2^{-N}3^{\ell },
\end{equation*}%
since the main term here occurs when $k=\ell $ and $i$ and $j$ are such that 
$A_{\limfunc{left}}^{\left( \ell ,j\right) ,\left( \ell ,i\right)
}=z_{j}^{\ell }-\left( c_{\limfunc{left}}^{N+1}\left( G_{i}^{\ell }\right) +%
\frac{1}{4\cdot 3^{\ell +1}}\right) \approx \frac{1}{3^{\ell }}$. A similar
estimate, with $A_{\limfunc{right}}^{\left( \ell ,j\right) ,\left(
k,i\right) }=z_{j}^{\ell }-\left( c_{\limfunc{right}}^{N+1}\left(
G_{i}^{k}\right) -\frac{1}{4\cdot 3^{k+1}}\right) $, is obtained for the
negative terms corresponding to the right edge of $G_{i}^{k}$, namely%
\begin{equation*}
2^{-N-2}\frac{1}{4\cdot 3^{\ell +1}}\sum_{k=0}^{\ell }\sum_{i=1}^{2^{k}}%
\frac{2}{\left\vert A_{\limfunc{right}}^{\left( \ell ,j\right) ,\left(
k,i\right) }\right\vert ^{2}+\left\vert \frac{1}{4\cdot 3^{k+1}}\right\vert
^{2}}\approx 2^{-N}3^{\ell }.
\end{equation*}

Now if we sum over the remaining terms for $\ell <k\leq N-1$, and use the
crude estimate $\left\vert \frac{1}{4\cdot 3^{\ell +1}}\pm \frac{1}{4\cdot
3^{k+1}}\right\vert \leq 2\frac{1}{4\cdot 3^{\ell +1}}$ for $\ell <k$, we
get approximately%
\begin{eqnarray*}
&&2^{-N-2}\sum_{k=\ell +1}^{N-1}\sum_{i=1}^{2^{k}}\left\{ \frac{2\frac{1}{%
4\cdot 3^{\ell +1}}}{\left\vert A_{\limfunc{left}}^{\left( \ell ,j\right)
,\left( k,i\right) }\right\vert ^{2}+\left\vert \frac{1}{4\cdot 3^{\ell +1}}+%
\frac{1}{4\cdot 3^{k+1}}\right\vert ^{2}}+\frac{2\frac{1}{4\cdot 3^{\ell +1}}%
}{\left\vert A_{\limfunc{right}}^{\left( \ell ,j\right) ,\left( k,i\right)
}\right\vert ^{2}+\left\vert \frac{1}{4\cdot 3^{\ell +1}}+\frac{1}{4\cdot
3^{k+1}}\right\vert ^{2}}\right\} \\
&\approx &2^{-N-2}\sum_{k=\ell +1}^{N-1}\sum_{i=1}^{2^{k}}\frac{4}{4\cdot
3^{\ell +1}}\left\{ \frac{1}{\left\vert A_{\limfunc{left}}^{\left( \ell
,j\right) ,\left( k,i\right) }\right\vert ^{2}+\left\vert \frac{1}{4\cdot
3^{\ell +1}}+\frac{1}{4\cdot 3^{k+1}}\right\vert ^{2}}+\frac{1}{\left\vert
A_{\limfunc{right}}^{\left( \ell ,j\right) ,\left( k,i\right) }\right\vert
^{2}+\left\vert \frac{1}{4\cdot 3^{\ell +1}}+\frac{1}{4\cdot 3^{k+1}}%
\right\vert ^{2}}\right\} \approx 2^{-N}3^{\ell },
\end{eqnarray*}%
since the main term here occurs when $k=\ell +1$ and $i$ and $j$ are such
that $A_{\limfunc{left}}^{\left( \ell ,j\right) ,\left( \ell +1,i\right)
}=z_{j}^{\ell }-\left( c_{\limfunc{left}}^{N+1}\left( G_{i}^{\ell +1}\right)
+\frac{1}{4\cdot 3^{\ell +2}}\right) \approx \frac{1}{3^{\ell }}$. So
altogether we have the estimate%
\begin{equation}
R_{2}^{1,2}\widehat{\omega }_{N}\left( z_{j}^{\ell },\frac{1}{4\cdot 3^{\ell
+1}}\right) \approx 2^{-N}3^{\ell },  \label{estimate}
\end{equation}%
which will suffice to prove the backward testing condition for $R_{2}^{1,2}$
below.

\subsection{The plan of attack}

\begin{itemize}
\item From our choice of $s_{j}^{k}$ we will obtain the Muckenhoupt
conditions $\mathcal{A}_{2}^{1}$ and $\mathcal{A}_{2}^{1,\ast }$ for the
measure pairs $\left( \widehat{{\sigma }}_{N},\widehat{\omega }_{N}\right) $
uniformly in $N\geq 1$.

\item The points $\left( z_{i}^{k}\left( x_{2}\right) ,\pm \frac{1}{4\cdot
3^{-k-1}}\right) $ lie in the zero set of $R_{1}^{1,2}\widehat{\omega }_{N}$%
, and the resulting cancellation in the backward testing condition for $%
R_{1}^{1,2}$ with respect to the weight pair $\left( \widehat{{\sigma }}_{N},%
\widehat{\omega }_{N}\right) $ is enough to obtain it uniformly in $N\geq 1$.

\item The self-similarity of the measure $\widehat{\dot{\sigma}}_{N}$ will
aid in computing the forward testing condition for $R_{1}^{1,2}$ with
respect to the measure pairs $\left( \widehat{\dot{\sigma}}_{N},\widehat{%
\omega }_{N}\right) $, and then a perturbation argument will establish the
forward testing condition for $R_{1}^{1,2}$ with respect to the measure
pairs $\left( \widehat{{\sigma }}_{N}^{+},\widehat{\omega }_{N}\right) $
uniformly in $N\geq 1$.

\item Then we will use the estimate (\ref{estimate}) to show that the
testing conditions for $R_{1}^{1,2}$ hold uniformly in $N\geq 1$ for the
measure pairs $\left( \widehat{{\sigma }}_{N}^{+},\widehat{\omega }%
_{N}\right) $.

\item We next establish the forward and backward energy conditions uniformly
in $N$ for the measure pairs $\left( \widehat{{\sigma }}_{N}^{+},\widehat{%
\omega }_{N}\right) $.

\item Then we establish the forward and backward testing conditions
uniformly in $N$ for the second component $R_{2}^{1,2}$ of the Riesz
transform.

\item Using $\mathcal{A}_{2}^{1}\left( \widehat{{\sigma }}_{N}^{+},\widehat{%
\omega }_{N}\right) \leq \mathcal{A}_{2}^{1}\left( \widehat{{\sigma }}_{N},%
\widehat{\omega }_{N}\right) \leq C<\infty $, we can then conclude from the $%
T1$ theorem in \cite{SaShUr7} that $\mathfrak{N}_{\mathbf{R}^{1,2}}\left( 
\widehat{{\sigma }}_{N}^{+},\widehat{\omega }_{N}\right) <\infty $ uniformly
in $N$.

\item Finally, we show by a direct computation that $\mathfrak{N}_{\mathbf{R}%
^{1,2}}\left( \widehat{{\sigma }}_{N},\widehat{\omega }_{N}\right) \leq 2%
\mathfrak{N}_{\mathbf{R}^{1,2}}\left( \widehat{{\sigma }}_{N}^{+},\widehat{%
\omega }_{N}\right) $ for all $N$, and that the backward energy condition
fails with respect to the measure pair $\left( \widehat{{\sigma }}_{N},%
\widehat{\omega }_{N}\right) $ for each $N$.

\item The result is that the two-dimensional Riesz transform $\mathbf{R}%
^{1,2}$ is bounded from $L^{2}\left( \widehat{{\sigma }}_{N}\right) $ to $%
L^{2}\left( \widehat{\omega }_{N}\right) $ uniformly in $N\geq 1$, yet the
energy constants for the weight pairs $\left( \widehat{{\sigma }}_{N},%
\widehat{\omega }_{N}\right) $ are unbounded for $N\geq 1$.
\end{itemize}

In order to execute this strategy in the next four subsections, although not
necessarily in the order specified above, we will follow as closely as we
can the line of argument in \cite{LaSaUr2}, adapting to the plane as
necessary. We begin by calculating the rate at which $R_{1}^{1,2}\widehat{%
\omega }_{N}\left( \cdot ,\frac{1}{4\cdot 3^{k+1}}\right) $ blows up at the
endpoints of the intervals $H_{j}^{k}$.

\begin{lemma}
\label{l.gkj}Let $H_{j}^{k}=(u_{j}^{k},v_{j}^{k})$. We have 
\begin{equation}
R_{1}^{1,2}\widehat{\omega }_{N}\left( u_{j}^{k}-c3^{-k},\frac{1}{4\cdot
3^{k+1}}\right) \approx \left( \frac{3}{2}\right) ^{k}\,,\ \ \ \ \ k\geq
0\,,\ 1\leq j\leq 2^{k}\,,  \label{e.rapid}
\end{equation}%
and a similar approximate equality, with signs reversed, holds for $%
v_{j}^{k} $.
\end{lemma}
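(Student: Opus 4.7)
My plan is to adapt the classical one-dimensional estimate in \cite{LaSaUr2}, where the same $(3/2)^{k}$ blow-up rate is established for the Hilbert transform against the Cantor measure, to the present planar setting. The key structural input is the explicit four-point-per-square representation of $\widehat{\omega}_{N}$ in (\ref{representation}), with each point mass of weight $2^{-N-2}$. The target point $z=(u_{j}^{k}-c\,3^{-k},\frac{1}{4\cdot 3^{k+1}})$ lies at horizontal distance of order $3^{-k}$ from the endpoint $a_{j}^{k}$ of $G_{j}^{k}$, its height matches that of the upper layer of level-$k$ point masses, and for a fixed small $c>0$ it stays bounded away from the singularities of the kernel $K_{1}^{1,2}(z,y)=\frac{z_{1}-y_{1}}{\left\vert z-y\right\vert ^{2}}$ on $\limfunc{supp}\widehat{\omega}_{N}$.

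I would proceed by a dyadic decomposition of $R_{1}^{1,2}\widehat{\omega}_{N}(z)$ according to the horizontal distance $\left\vert y_{1}-a_{j}^{k}\right\vert \sim 3^{-\ell}$. In each such annulus the $\widehat{\omega}_{N}$-mass is comparable to $2^{-\ell}$, inherited up to bounded constants from the Cantor scaling $\omega(I_{i}^{\ell})=2^{-\ell}$ via the spreading procedure. For $\ell \leq k$ the kernel has size $\sim 3^{\ell}$ and the scale-$\ell$ contribution is $\sim (3/2)^{\ell}$, whose sum from $\ell=0$ to $k$ is a geometric series dominated by its last term $(3/2)^{k}$. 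For $\ell > k$ the kernel has uniform size $\sim 3^{k}$ (since $z$ is at distance $\sim 3^{-k}$ from every $y$ with $\left\vert y_{1}-a_{j}^{k}\right\vert \leq 3^{-k}$) and the scale-$\ell$ contribution is $\sim 2^{-\ell}\cdot 3^{k}$, whose tail sum over $\ell>k$ is again controlled by $(3/2)^{k}$. This yields the upper bound $\left\vert R_{1}^{1,2}\widehat{\omega}_{N}(z)\right\vert \lesssim (3/2)^{k}$.

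The matching lower bound is the main obstacle, since the contributions from $y_{1}<a_{j}^{k}$ (positive) and $y_{1}>a_{j}^{k}$ (negative) could in principle cancel. The essential point is that $z$ is offset from $a_{j}^{k}$ by the asymmetric amount $\sim 3^{-k}$ to the right, so the left and right halves of the $\widehat{\omega}_{N}$-configuration are seen from $z$ on unequal footing; pairing the descending towers on the two sides annulus by annulus, the leftover at each scale $\ell \geq k$ has definite positive sign and size $\sim 2^{-\ell}\cdot 3^{k}$, summing to a genuine $(3/2)^{k}$. The direct contribution from the four nearest point masses inside $\widehat{G_{j}^{k}}$ itself is easily checked to be of size $O(2^{-N}\cdot 3^{k})=O(2^{k-N}(3/2)^{k})$, which is at most $(3/2)^{k}$ in magnitude and hence does not spoil the main term. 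The corresponding statement at $v_{j}^{k}$ with signs reversed then follows from the analogous argument in which the roles of the left and right descending towers are swapped.
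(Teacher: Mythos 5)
Your upper bound via the annular decomposition $\lvert y_{1}-a_{j}^{k}\rvert\sim 3^{-\ell}$ is a valid and arguably cleaner route than the paper's (which first invokes a monotonicity of the values $R_{1}^{1,2}\widehat{\omega}_{N}(u_{j}^{k}\pm c3^{-k},\cdot)$ in $j$ to reduce to the two extreme cases $j=1$ and $j=2^{k}$, and then sums over the left-hand chain $\widehat{L_{2^{\ell}-1}^{\ell}}$, $\ell=1,\dots,k$). Your two-regime bookkeeping $(3/2)^{\ell}$ for $\ell\leq k$ and $3^{k}2^{-\ell}$ for $\ell>k$ gives the same $\lesssim(3/2)^{k}$, with implicit constant depending on $c$.

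The lower bound, however, has a real gap, and it is exactly the delicate part the paper spends its effort on. Two issues. First, you assert that $z=(u_{j}^{k}-c3^{-k},\tfrac{1}{4\cdot3^{k+1}})$ ``stays bounded away from the singularities'' for fixed small $c$, and that the contribution of the four nearest masses $P_{j,\pm}^{k},Q_{j,\pm}^{k}$ is $O(2^{k-N}(3/2)^{k})$ and ``does not spoil the main term.'' But all four of those masses lie strictly to the \emph{right} of $z$ (the nearest, $P_{j,+}^{k}$, at horizontal distance exactly $c3^{-k}$ and at the same height), so they contribute with a \emph{definite negative} sign and with a hidden $c^{-1}$ in the constant; for $k$ within $O(\log(1/c))$ of $N$ this single block is already of the same order $(3/2)^{k}$ as your claimed lower bound and of the wrong sign, so it cannot simply be dismissed as small. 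Second, the phrase ``pairing the descending towers on the two sides annulus by annulus, the leftover at each scale $\ell\geq k$ has definite positive sign and size $\sim 2^{-\ell}\cdot 3^{k}$'' is the entire content of the lower bound but is unsubstantiated: both towers sit at distance $\sim 3^{-k}$ from $z$ (the vertical offset $\tfrac{1}{4\cdot3^{k+1}}$ dominates once $\ell>k$), so left and right kernel values are \emph{both} of size $\sim 3^{k}$, and the sign and size of their difference is precisely what must be computed, not asserted. The paper does not attempt such a pairing; instead it splits into a ``far'' part estimated from above by $A(3/2)^{k}$ and a ``near'' part bounded from below by $\gtrsim c^{-1}(3/2)^{k}$, and then \emph{chooses $0<c\ll(2A)^{-1}$} to make the near part dominate. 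Your proposal never mentions that $c$ has to be tuned small, and without that lever your near and far contributions are both $\Theta((3/2)^{k})$ with competing signs, so the positivity of the total does not follow. To repair the argument you would either need to reproduce the $c$-tuning of the paper, or genuinely compute the annulus-by-annulus cancellation and show the net excess is positive with a universal constant — neither of which is done as written.
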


This in particular shows that the zeros $z_{j}^{k}$ cannot move too far from
the middle: 
\begin{equation}
\sup_{j,k}\frac{\lvert z_{j}^{k}-c_{j}^{k}\rvert }{\lvert H_{j}^{k}\rvert }%
<\zeta <1\,.  \label{e.closeToMiddle}
\end{equation}

\begin{proof}
Fix $k$, and consider the numbers $R_{1}^{1,2}\widehat{\omega }_{N}\left(
u_{j}^{k}\pm c3^{-k},\frac{1}{4\cdot 3^{k+1}}\right) $ for $1\leq j\leq
2^{k} $. These numbers are monotonically increasing as the point of
evaluation $u_{j}^{k}\pm c3^{-k}$ moves from left to right across the
interval $[0,1]$. So it suffices to verify that 
\begin{equation}
C_{1}\left( \frac{3}{2}\right) ^{k}\leq R_{1}^{1,2}\widehat{\omega }%
_{N}\left( u_{1}^{k}-c3^{-k},\frac{1}{4\cdot 3^{k+1}}\right) \leq R_{1}^{1,2}%
\widehat{\omega }_{N}\left( u_{2^{k}}^{k}+c3^{-k},\frac{1}{4\cdot 3^{k+1}}%
\right) \leq C_{2}\left( \frac{3}{2}\right) ^{k}  \label{e..H<}
\end{equation}

We consider first the right hand inequality, and write 
\begin{align*}
R_{1}^{1,2}\widehat{\omega }_{N}\left( u_{2^{k}}^{k}+c3^{-k},\frac{1}{4\cdot
3^{k+1}}\right) & =\int_{(H_{2^{k}}^{k})^{c}}\frac{%
a_{2^{k}}^{k}+c3^{-k}-y_{1}}{\left\vert \left( a_{2^{k}}^{k}+c3^{-k},\frac{1%
}{4\cdot 3^{k+1}}\right) -\left( y_{1},y_{2}\right) \right\vert ^{2}}d%
\widehat{\omega }_{N}\left( y_{1},y_{2}\right) \\
& \leq \int \int_{\widehat{\left[ 0,u_{2^{k}}^{k}\right] }}\frac{%
a_{2^{k}}^{k}+c3^{-k}-y_{1}}{\left\vert \left( a_{2^{k}}^{k}+c3^{-k},\frac{1%
}{4\cdot 3^{k+1}}\right) -\left( y_{1},y_{2}\right) \right\vert ^{2}}d%
\widehat{\omega }_{N}\left( y_{1},y_{2}\right) .
\end{align*}%
Here we have discarded that part of the domain of the integral where the
integrand is nonpositive. Now, on the square $\widehat{\left[ 0,u_{2^{k}}^{k}%
\right] }$, the support of $\widehat{\omega }_{N}$ is contained in the set $%
\bigcup_{\ell =1}^{k}\widehat{L_{2^{\ell }-1}^{\ell }}$. Using this, we
continue the estimate above as 
\begin{align*}
R_{1}^{1,2}\widehat{\omega }_{N}\left( u_{2^{k}}^{k}+c3^{-k},\frac{1}{4\cdot
3^{k+1}}\right) & \leq \sum_{\ell =1}^{k}\widehat{\omega }_{N}\left( 
\widehat{L_{2^{\ell }-1}^{\ell }}\right) \sup_{y\in \widehat{I_{2^{\ell
}-1}^{\ell }}}\frac{a_{2^{k}}^{k}+c3^{-k}-y_{1}}{\left\vert \left(
a_{2^{k}}^{k}+c3^{-k},\frac{1}{4\cdot 3^{k+1}}\right) -\left(
y_{1},y_{2}\right) \right\vert ^{2}} \\
& \lesssim c^{-1}\widehat{\omega }_{N}\left( \widehat{L_{2^{\ell }-1}^{\ell }%
}\right) \sup_{y\in \widehat{I_{2^{k}-1}^{k}}}\frac{%
a_{2^{k}}^{k}+c3^{-k}-y_{1}}{\left\vert \left( a_{2^{k}}^{k}+c3^{-k},\frac{1%
}{4\cdot 3^{k+1}}\right) -\left( y_{1},y_{2}\right) \right\vert ^{2}} \\
& \lesssim c^{-1}2^{-k}3^{k}=c^{-1}\left( \frac{3}{2}\right) ^{k}\,.
\end{align*}%
It is useful to record for use below, that in this sum, the summand
associated with $\ell =k$ is the dominant one.

\smallskip Now we consider the left hand inequality in \eqref{e..H<}. We
split the support of $\widehat{\omega }_{N}$ into the sets $I_{1}^{k}\,,\
I_{2}^{k}\,I_{2}^{k-1},\dotsc ,I_{2}^{1}$. By the argument above, we have 
\begin{equation*}
\left\vert \sum_{\ell =1}^{k-1}R_{1}^{1,2}\left( \mathbf{1}_{\widehat{%
L_{2}^{\ell }}}\widehat{\omega }_{N}\right) (a_{1}^{k}+c3^{-k},\frac{1}{%
4\cdot 3^{k+1}})\right\vert \leq A\left( \frac{3}{2}\right) ^{k}\,,
\end{equation*}%
where $A$ is an absolute constant, and we have yet to select the constant $c$%
. But we also have 
\begin{align*}
R_{1}^{1,2}(\mathbf{1}_{\widehat{L_{1}^{k}}\cup \widehat{L_{2}^{k}}}\widehat{%
\omega }_{N})& =\int_{I_{1}^{k}}\left\{ \frac{a_{2^{k}}^{k}+c3^{-k}-y_{1}}{%
\left\vert \left( a_{2^{k}}^{k}+c3^{-k},\frac{1}{4\cdot 3^{k+1}}\right)
-\left( y_{1},y_{2}\right) \right\vert ^{2}}-\frac{a_{2^{k}}^{k}+\left(
1+c\right) 3^{-k}-y_{1}}{\left\vert \left( a_{2^{k}}^{k}+\left( 1+c\right)
3^{-k},\frac{1}{4\cdot 3^{k+1}}\right) -\left( y_{1},y_{2}\right)
\right\vert ^{2}}\right\} d\widehat{\omega }_{N}\left( y\right) \\
& \gtrsim c^{-1}3^{k}{\omega (L_{1}^{k})=}c^{-1}\left( \frac{3}{2}\right)
^{k}.
\end{align*}%
The choice $0<c\ll (2A)^{-1}$ then concludes the proof of Lemma \ref{l.gkj}.
\end{proof}

\section{The $A_{2}^{1}$ Condition}

We recall from (\ref{def A2}) the \emph{one-tailed} \emph{constant with
holes }$\mathcal{A}_{2}^{1}$ in the plane $\mathbb{R}^{2}$ using the
reproducing Poisson kernel $\mathcal{P}^{1}$. Suppose $\sigma $ and $\omega $
are locally finite positive Borel measures on $\mathbb{R}$. Then the
one-tailed constants $\mathcal{A}_{2}^{1}$ and $\mathcal{A}_{2}^{1,\ast }$
with holes for the weight pair $\left( \sigma ,\omega \right) $ are given by%
\begin{eqnarray*}
\mathcal{A}_{2}^{1} &\equiv &\sup_{Q\in \mathcal{P}}\mathcal{P}^{1}\left( Q,%
\mathbf{1}_{Q^{c}}\sigma \right) \frac{\left\vert Q\right\vert _{\omega }}{%
\left\vert Q\right\vert ^{1-\frac{1}{2}}}<\infty , \\
\mathcal{A}_{2}^{1,\ast } &\equiv &\sup_{Q\in \mathcal{P}}\mathcal{P}%
^{1}\left( Q,\mathbf{1}_{Q^{c}}\omega \right) \frac{\left\vert Q\right\vert
_{\sigma }}{\left\vert Q\right\vert ^{1-\frac{1}{2}}}<\infty .
\end{eqnarray*}%
For pairs of measures that share no common point masses, we will also use
the classical Muckenhoupt condition%
\begin{equation*}
A_{2}^{1}\equiv \sup_{Q\in \mathcal{P}}\frac{\left\vert Q\right\vert
_{\omega }\left\vert Q\right\vert _{\sigma }}{\left\vert Q\right\vert
^{2\left( 1-\frac{1}{2}\right) }}<\infty .
\end{equation*}

We will first verify that the $A_{2}^{1}$ condition holds for the weight
pair $\left( \widehat{\omega }_{N},\widehat{\dot{\sigma}}_{N}\right) $. The
same argument will apply to the weight pair $\left( \widehat{\omega }_{N},%
\widehat{\sigma }_{N}\right) $. Recall that we have $\left\vert \widehat{%
L_{i}^{k}}\right\vert _{\widehat{\omega }}\approx 2^{-k}$. Now we use the
definition 
\begin{equation*}
s_{j}^{k}=\left( \frac{1}{3}\right) ^{k}\left( \frac{2}{3}\right) ^{k},
\end{equation*}%
to compute the estimate 
\begin{equation}
\widehat{\dot{\sigma}}_{N}\left( \widehat{L_{r}^{\ell }}\right)
=\sum_{\left( k,j\right) :\ z_{j}^{k}\in L_{r}^{\ell
}}s_{j}^{k}=\sum_{k=\ell }^{\infty }2^{k-\ell }\left( \frac{1}{3}\right)
^{k}\left( \frac{2}{3}\right) ^{k}\approx \left( \frac{1}{3}\right) ^{\ell
}\left( \frac{2}{3}\right) ^{\ell }=s_{r}^{\ell }\,,  \label{sigma measure}
\end{equation}%
by the ratio test since%
\begin{equation}
\frac{2^{k+1}\left( \frac{1}{3}\right) ^{\left( k+1\right) }\left( \frac{2}{3%
}\right) ^{k+1}}{2^{k}\left( \frac{1}{3}\right) ^{k}\left( \frac{2}{3}%
\right) ^{k}}=4\left( \frac{1}{3}\right) ^{2}<1.  \label{ratio}
\end{equation}%
From this, it follows that we have 
\begin{equation}
\frac{\widehat{\dot{\sigma}}_{N}\left( \widehat{L_{j}^{k}}\right) \widehat{%
\omega }_{N}\left( \widehat{L_{j}^{k}}\right) }{\left\vert \widehat{L_{j}^{k}%
}\right\vert }\approx \frac{s_{j}^{k}\widehat{\omega }_{N}\left( \widehat{%
L_{j}^{k}}\right) }{\left\vert I_{j}^{k}\right\vert ^{2}}\approx 1.
\label{e.sA2}
\end{equation}%
The analogous condition $\mathcal{A}_{2}^{1}$ with a tail also holds, namely%
\begin{equation*}
\mathcal{A}_{2}^{1}\left( \widehat{\dot{\sigma}}_{N},\widehat{\omega }%
_{N}\right) =\sup_{Q\in \mathcal{P}^{2}}\mathcal{P}^{1}\left( Q,\widehat{%
\dot{\sigma}}_{N}\right) \mathcal{P}^{1}\left( Q,\widehat{\omega }%
_{N}\right) <\infty ,
\end{equation*}%
where%
\begin{equation*}
\mathcal{P}^{1}\left( Q,\mu \right) \equiv \int_{\mathbb{R}^{2}}\frac{\ell
\left( Q\right) }{\ell \left( Q\right) ^{2}+\left\vert x-c_{Q}\right\vert
^{2}}d\mu \left( x\right) =\int_{\mathbb{R}^{2}}\frac{\left\vert
Q\right\vert ^{\frac{1}{2}}}{\left\vert Q\right\vert +\left\vert
x-c_{Q}\right\vert ^{2}}d\mu \left( x\right) .
\end{equation*}%
Indeed, using $\left\vert \widehat{L_{r}^{\ell }}\right\vert _{\widehat{%
\omega }_{N}}\approx 2^{-\ell }$, one can verify 
\begin{eqnarray*}
\mathcal{P}^{1}\left( \widehat{L_{r}^{\ell }},\widehat{\omega }_{N}\right)
&=&\left\vert \widehat{L_{r}^{\ell }}\right\vert ^{\frac{1}{2}-1}\left\vert 
\widehat{L_{r}^{\ell }}\right\vert _{\widehat{\omega }_{N}}+\sum_{k=1}^{%
\infty }2^{-2k}\left\vert \widehat{L_{r}^{\ell }}\right\vert ^{\frac{1}{2}%
-1}\left\vert 2^{k}\widehat{L_{r}^{\ell }}\setminus 2^{k-1}\widehat{%
L_{r}^{\ell }}\right\vert _{\widehat{\omega }_{N}}\lesssim \frac{\widehat{%
\omega }_{N}\left( \widehat{L_{r}^{\ell }}\right) }{\left\vert \widehat{%
L_{r}^{\ell }}\right\vert ^{1-\frac{1}{2}}}, \\
\mathcal{P}^{\alpha }\left( \widehat{L_{r}^{\ell }},\widehat{\dot{\sigma}}%
_{N}\right) &\lesssim &\sum_{m=0}^{\infty }2^{-2m}\frac{\widehat{\dot{\sigma}%
}_{N}\left( 2^{m}\widehat{L_{r}^{\ell }}\right) }{\left\vert 2^{m}\widehat{%
L_{r}^{\ell }}\right\vert ^{1-\frac{1}{2}}}\lesssim \sum_{m=0}^{\infty
}2^{-2m}\frac{\left( \frac{1}{3}\right) ^{\left( \ell +m\right) }\left( 
\frac{2}{3}\right) ^{\ell +m}}{\left( \frac{1}{3}\right) ^{\left( \ell
+m\right) }} \\
&\lesssim &\frac{\left( \frac{1}{3}\right) ^{\ell }\left( \frac{2}{3}\right)
^{\ell }}{\left( \frac{1}{3}\right) ^{\ell }}\approx \frac{\widehat{\dot{%
\sigma}}_{N}\left( \widehat{L_{r}^{\ell }}\right) }{\left\vert \widehat{%
L_{r}^{\ell }}\right\vert ^{1-\frac{1}{2}}}.
\end{eqnarray*}%
From this and \eqref{e.sA2}, we see that 
\begin{equation*}
\mathcal{P}^{1}\left( \widehat{L_{r}^{\ell }},\widehat{\omega }_{N}\right) 
\mathcal{P}^{1}\left( \widehat{L_{r}^{\ell }},\widehat{\dot{\sigma}}%
_{N}\right) \lesssim \frac{\widehat{\omega }_{N}\left( \widehat{L_{r}^{\ell }%
}\right) \widehat{\dot{\sigma}}_{N}\left( \widehat{L_{r}^{\ell }}\right) }{%
\left\vert \widehat{L_{r}^{\ell }}\right\vert }\lesssim 1\,.
\end{equation*}%
The case of a general square $Q$ now follows easily.

\section{The pivotal and energy conditions}

In this subsection, we show that the backward energy constants with respect
to the weight pairs $\left( \widehat{\sigma }_{N},\widehat{\omega }%
_{N}\right) $ are unbounded in $N$. On the other hand, we show that the
weight pairs $\left( \widehat{\sigma }_{N},\widehat{\omega }_{N}\right) $
satisfy the forward energy condition uniformly in $N\geq 1$. Recall that $%
\widehat{I}\equiv I\times \left[ -\frac{\ell \left( I\right) }{2},\frac{\ell
\left( I\right) }{2}\right] $ is the square centered on the $x_{1}$-axis
whose intersection with the $x_{1}$-axis is the interval $I\subset \mathbb{R}
$. Recall also that%
\begin{equation*}
\mathrm{P}^{1}\left( Q,\mu \right) \equiv \int_{\mathbb{R}^{2}}\frac{%
\left\vert Q\right\vert ^{\frac{1}{2}}}{\left( \left\vert Q\right\vert ^{%
\frac{1}{2}}+\left\vert x-c_{Q}\right\vert \right) ^{2}}d\mu \left( x\right)
,
\end{equation*}%
and that the forward pivotal constant $\mathcal{V}_{2}^{1,\limfunc{overlap}}$
in the plane $\mathbb{R}^{2}$ for the weight pair $\left( \widehat{\sigma }%
_{N},\widehat{\omega }_{N}\right) $ is given by%
\begin{equation}
\left( \mathcal{V}_{2}^{1,\limfunc{overlap}}\left( \widehat{\sigma }_{N},%
\widehat{\omega }_{N}\right) \right) ^{2}\equiv \sup_{\mathcal{D}}\sup 
_{\substack{ I=\dot{\cup}I_{r}  \\ \sum_{r=1}^{\infty }\mathbf{1}_{\gamma
I_{r}}\leq \beta \mathbf{1}_{I}}}\frac{1}{\left\vert I\right\vert _{\widehat{%
\sigma }_{N}}}\sum_{r=1}^{\infty }\mathrm{P}^{\alpha }\left( I_{r},\mathbf{1}%
_{I\setminus \gamma I_{r}}\widehat{\sigma }_{N}\right) ^{2}\left\vert
I_{r}\right\vert _{\widehat{\omega }_{N}}\ .  \label{for piv con}
\end{equation}%
The backward pivotal constant $\mathcal{V}_{2}^{1,\limfunc{overlap}}\left( 
\widehat{\sigma }_{N},\widehat{\omega }_{N}\right) $ is obtained by
interchanging the roles of $\widehat{\sigma }$ and $\widehat{\omega }_{N}$.

\subsection{Failure of the backward pivotal and backward energy conditions 
\label{Subsubfails}}

Failure of the backward pivotal condition uniformly in $N\geq 1$ is
straightforward. Indeed, $I_{1}^{\ell }\subset I_{1}^{\ell -1}\subset
...\subset I_{1}^{0}$ and so 
\begin{equation*}
\mathrm{P}^{1}\left( \widehat{H_{1}^{\ell }},\widehat{\omega }_{N}\right)
\approx \mathrm{P}^{1}\left( \widehat{I_{1}^{\ell }},\widehat{\omega }%
_{N}\right) \approx \sum_{k=0}^{\ell }\frac{\left\vert I_{1}^{\ell
}\right\vert }{\left\vert I_{1}^{k}\right\vert ^{2}}\widehat{\omega }%
_{N}\left( \widehat{I_{r}^{k}}\right) \approx \sum_{k=0}^{\ell }\frac{%
3^{-\ell }}{3^{-2k}}2^{-k}\approx \left( \frac{3}{2}\right) ^{\ell },
\end{equation*}%
and similarly%
\begin{equation*}
\mathrm{P}^{1}\left( \widehat{H_{r}^{\ell }},\widehat{\omega }_{N}\right)
\approx \mathrm{P}^{1}\left( \widehat{I_{r}^{\ell }},\widehat{\omega }%
_{N}\right) \approx \left( \frac{3}{2}\right) ^{\ell },\ \ \ \ \ \text{for
all }r.
\end{equation*}%
We also have $\left\vert \widehat{H_{r}^{\ell }}\right\vert _{\widehat{%
\sigma }_{N}}\approx \left( \frac{1}{3}\right) ^{\ell }\left( \frac{2}{3}%
\right) ^{\ell }$. Considering the decomposition $\overset{\cdot }{\bigcup }%
_{\ell ,r}\widehat{H_{r}^{\ell }}\subset \widehat{\left[ 0,1\right] }=\left[
0,1\right] \times \left[ -\frac{1}{2},\frac{1}{2}\right] $ where the squares 
$\gamma \widehat{H_{r}^{\ell }}$ are contained in $\widehat{G_{r}^{\ell }}$
and the $\widehat{G_{r}^{\ell }}$ are pairwise disjoint, we thus have%
\begin{equation*}
\sum_{\ell ,r}\left\vert \widehat{H_{r}^{\ell }}\right\vert _{\widehat{%
\sigma }_{N}}\mathrm{P}^{1}\left( \widehat{H_{r}^{\ell }},\widehat{\omega }%
_{N}\right) ^{2}\approx \sum_{\ell =0}^{N-1}2^{\ell }\left( \frac{1}{3}%
\right) ^{\ell }\left( \frac{2}{3}\right) ^{\ell }\left( \frac{3}{2}\right)
^{2\ell }\mathbb{\approx }\sum_{\ell =0}^{N-1}1=N,
\end{equation*}%
which shows that the backward pivotal constants $\mathcal{V}_{1}^{\limfunc{%
overlap},\ast }\left( \widehat{\sigma }_{N},\widehat{\omega }_{N}\right) $
are unbounded in $N\geq 1$.

Now we consider the backward energy condition. The sum corresponding to the
above is%
\begin{equation*}
\sum_{\ell ,r}\left\vert \widehat{H_{r}^{\ell }}\right\vert _{\widehat{%
\sigma }_{N}}\mathsf{E}\left( \widehat{H_{r}^{\ell }},\widehat{\sigma }%
_{N}\right) ^{2}\mathrm{P}^{1}\left( \widehat{H_{r}^{\ell }},\widehat{\omega 
}_{N}\right) ^{2}
\end{equation*}%
where 
\begin{equation*}
\mathsf{E}\left( \widehat{H_{r}^{\ell }},\widehat{\sigma }_{N}\right) ^{2}=%
\frac{1}{\left\vert \widehat{H_{r}^{\ell }}\right\vert _{\widehat{\sigma }%
_{N}}}\frac{1}{\left\vert \widehat{H_{r}^{\ell }}\right\vert _{\widehat{%
\sigma }_{N}}}\int_{\widehat{H_{r}^{\ell }}}\int_{\widehat{H_{r}^{\ell }}%
}\left\vert \frac{x-z}{\left\vert H_{r}^{\ell }\right\vert }\right\vert ^{2}d%
\widehat{\sigma }_{N}\left( x\right) \widehat{\sigma }_{N}d\left( z\right)
\approx 1
\end{equation*}%
since $\mathbf{1}_{\widehat{H_{r}^{\ell }}}\widehat{\sigma }_{N}$ consists
of four point masses separated by a distance of approximately $\left\vert 
\widehat{H_{r}^{\ell }}\right\vert ^{\frac{1}{2}}=\left\vert H_{r}^{\ell
}\right\vert $. Thus the backward energy constants fail to be bounded in $%
N\geq 1$ as well.

\subsection{The forward energy condition}

It remains to verify that the pair of measures $\left( \widehat{\sigma }_{N},%
\widehat{\omega }_{N}\right) $ satisfy the forward energy conditions. We
will actually establish the stronger forward pivotal condition (\ref{for piv
con}), which then implies that $\mathcal{E}_{2}^{1}<\infty $. For this it
suffices to show that the forward maximal inequality%
\begin{equation}
\int \mathcal{M}\left( f\widehat{\sigma }_{N}\right) ^{2}d\widehat{\omega }%
_{N}\leq C\int \left\vert f\right\vert ^{2}d\widehat{\sigma }_{N}
\label{M2weight}
\end{equation}%
holds for the pair $\left( \widehat{\sigma }_{N},\widehat{\omega }%
_{N}\right) $, and (\ref{M2weight}) in turn follows from the testing
condition%
\begin{equation}
\int \mathcal{M}\left( \mathbf{1}_{Q}\widehat{\sigma }_{N}\right) ^{2}d%
\widehat{\omega }_{N}\leq C\int_{Q}d\widehat{\sigma }_{N},  \label{testmax}
\end{equation}%
for all squares $Q$ (see \cite{MR676801}). We will show (\ref{testmax}) when 
$Q=\widehat{I_{r}^{\ell }}$, the remaining cases being an easy consequence
of this one. For this we use the fact that%
\begin{equation}
\mathcal{M}\left( \mathbf{1}_{\widehat{I_{r}^{\ell }}}\widehat{\sigma }%
_{N}\right) \left( x\right) \leq C\left( \frac{2}{3}\right) ^{\ell },\ \ \ \
\ x=\left( x_{1},x_{2}\right) \in \limfunc{supp}\widehat{\omega }_{N}.
\label{Msigma bounded}
\end{equation}%
To see (\ref{Msigma bounded}), note that for each $x=\left(
x_{1},x_{2}\right) \in \limfunc{supp}\widehat{\omega }_{N}\cap \widehat{%
L_{r}^{\ell }}$ , we have%
\begin{equation*}
\mathcal{M}\left( \mathbf{1}_{\widehat{L_{r}^{\ell }}}\widehat{\sigma }%
_{N}\right) \left( x\right) \leq \sup_{\left( k,j\right) :x\in \widehat{%
L_{j}^{k}}}\frac{1}{\left\vert \widehat{L_{j}^{k}}\right\vert }\int_{%
\widehat{L_{j}^{k}}\cap \widehat{I_{r}^{\ell }}}d\widehat{\sigma }%
_{N}\approx \sup_{\left( k,j\right) :x\in \widehat{L_{j}^{k}}}\frac{\left( 
\frac{1}{3}\right) ^{k\vee \ell }\left( \frac{2}{3}\right) ^{k\vee \ell }}{%
\left( \frac{1}{3}\right) ^{k}}\approx \left( \frac{2}{3}\right) ^{\ell }.
\end{equation*}%
Thus we have%
\begin{equation*}
\int_{\widehat{L_{r}^{\ell }}}\mathcal{M}\left( \mathbf{1}_{\widehat{%
L_{r}^{\ell }}}\widehat{\sigma }_{N}\right) ^{2}d\widehat{\omega }_{N}\leq
C\left( \frac{2}{3}\right) ^{2\ell }\widehat{\omega }_{N}\left( \widehat{%
L_{r}^{\ell }}\right) \approx C\left( \frac{1}{3}\right) ^{2\ell }\left( 
\frac{1}{2}\right) ^{\ell }=Cs_{r}^{\ell }\approx C\int_{\widehat{%
I_{r}^{\ell }}}d\widehat{\sigma }_{N}.
\end{equation*}%
This yields the case $Q=\widehat{L_{r}^{\ell }}$ of (\ref{testmax}), and
completes our proof of the pivotal condition, and hence also of the forward
energy conditions uniformly in $N\geq 1$.

\section{Testing conditions for the first component $R_{1}^{1,2}$}

In this section we establish both testing conditions for $R_{1}^{1,2}$ with
respect to the weight pairs $\left( \widehat{\sigma }_{N},\widehat{\omega }%
_{N}\right) $ uniformly in $N\geq 1$. We consider first the forward testing
condition.

\subsection{The forward testing condition}

As an initial step in verifying the forward testing condition in (\ref%
{testing conditions}) with respect to the weight pair $\left( \widehat{%
\sigma }_{N},\widehat{\omega }_{N}\right) $, namely 
\begin{equation*}
\mathfrak{T}_{R_{1}^{1,2}}\left( \widehat{\sigma }_{N},\widehat{\omega }%
_{N}\right) \equiv \sup_{\text{squares }I}\sqrt{\frac{1}{\left\vert
I\right\vert _{\widehat{\sigma }_{N}}}\int_{I}\left\vert H_{\widehat{\sigma }%
_{N}}\mathbf{1}_{I}\right\vert ^{2}d\widehat{\omega }_{N}}<\infty ,
\end{equation*}
we replace $\widehat{\sigma }_{N}$ by the self-similar measure $\widehat{%
\dot{\sigma}}_{N}$, and exploit the self-similarity of both measures $%
\widehat{\dot{\sigma}}_{N}$ and $\widehat{\omega }_{N}$ in the following
replicating identities: 
\begin{eqnarray}
\widehat{\omega }_{N} &=&\frac{1}{2}\func{Dil}_{\frac{1}{3}}\widehat{\omega }%
_{N}+\frac{1}{2}\func{Trans}_{\left( \frac{2}{3},0\right) }\func{Dil}_{\frac{%
1}{3}}\widehat{\omega }_{N}\equiv \widehat{\omega }_{N,1}+\widehat{\omega }%
_{N,2},  \label{self-similar} \\
\widehat{\dot{\sigma}}_{N} &=&\frac{2}{9}\func{Dil}_{\frac{1}{3}}\widehat{%
\dot{\sigma}}_{N}+\delta _{\frac{1}{2}}+\frac{2}{9}\func{Trans}_{\left( 
\frac{2}{3},0\right) }\func{Dil}_{\frac{1}{3}}\widehat{\dot{\sigma}}%
_{N}\equiv \widehat{\dot{\sigma}}_{N,1}+\delta _{\frac{1}{2}}+\widehat{\dot{%
\sigma}}_{N,2},
\end{eqnarray}%
where $\func{Trans}_{\left( \frac{2}{3},0\right) }$ is translation in the
plane by the vector $\left( \frac{2}{3},0\right) $, and $\func{Dil}_{\frac{1%
}{3}}$ is dilation in the plane by the factor $\frac{1}{3}$. But now we note
that%
\begin{eqnarray*}
\int \left\vert R_{1}^{1,2}\widehat{\dot{\sigma}}_{N,1}\right\vert ^{2}%
\widehat{\omega }_{N,1} &=&\frac{1}{2}\int \left\vert R_{1}^{1,2}\widehat{%
\dot{\sigma}}_{N,1}\left( x\right) \right\vert ^{2}\func{Dil}_{\frac{1}{3}}%
\widehat{\omega }_{N}\left( x\right) =\frac{1}{2}\int \left\vert R_{1}^{1,2}%
\widehat{\dot{\sigma}}_{N,1}\left( \frac{x}{3}\right) \right\vert ^{2}%
\widehat{\omega }_{N}\left( x\right) \\
&=&\frac{1}{2}\int \left\vert \int \frac{z_{1}-\frac{x_{1}}{3}}{\left( z_{1}-%
\frac{x_{1}}{3}\right) ^{2}+\left( z_{2}-\frac{x_{2}}{3}\right) ^{2}}\frac{2%
}{9}\func{Dil}_{\frac{1}{3}}\widehat{\dot{\sigma}}_{N}\left( z\right)
\right\vert ^{2}\widehat{\omega }_{N}\left( x\right) \\
&=&\frac{1}{2}\left( \frac{2}{9}\right) ^{2}\int \left\vert \int \frac{\frac{%
z_{1}}{3}-\frac{x_{1}}{3}}{\left( \frac{z_{1}}{3}-\frac{x_{1}}{3}\right)
^{2}+\left( \frac{z_{2}}{3}-\frac{x_{2}}{3}\right) ^{2}}\widehat{\dot{\sigma}%
}_{N}\left( z\right) \right\vert ^{2}\widehat{\omega }_{N}\left( x\right) \\
&=&\frac{1}{2}\left( \frac{2}{9}\right) ^{2}9\int \left\vert R_{1}^{1,2}%
\widehat{\dot{\sigma}}_{N}\left( x\right) \right\vert ^{2}\widehat{\omega }%
_{N}\left( x\right) =\frac{2}{9}\int \left\vert R_{1}^{1,2}\widehat{\dot{%
\sigma}}_{N}\right\vert ^{2}\widehat{\omega }_{N},
\end{eqnarray*}%
and similarly $\int \left\vert R_{1}^{1,2}\widehat{\dot{\sigma}}%
_{N,2}\right\vert ^{2}\widehat{\omega }_{N,2}=\frac{2}{9}\int \left\vert
R_{1}^{1,2}\widehat{\dot{\sigma}}_{N}\right\vert ^{2}\widehat{\omega }_{N}$.
We then claim as in \cite{LaSaUr2} that%
\begin{eqnarray*}
\int \left\vert R_{1}^{1,2}\widehat{\dot{\sigma}}_{N}\right\vert ^{2}\omega
&=&\int \left\vert R_{1}^{1,2}\left( \widehat{\dot{\sigma}}_{N,1}+\delta _{%
\frac{1}{2}}+\widehat{\dot{\sigma}}_{N,2}\right) \right\vert ^{2}\widehat{%
\omega }_{N,1}+\int \left\vert R_{1}^{1,2}\left( \widehat{\dot{\sigma}}%
_{N,1}+\delta _{\frac{1}{2}}+\widehat{\dot{\sigma}}_{N,2}\right) \right\vert
^{2}\widehat{\omega }_{N,2} \\
&=&\left( 1+\varepsilon \right) \left\{ \int \left\vert R_{1}^{1,2}\widehat{%
\dot{\sigma}}_{N,1}\right\vert ^{2}\widehat{\omega }_{N,1}+\int \left\vert
R_{1}^{1,2}\widehat{\dot{\sigma}}_{N,2}\right\vert ^{2}\widehat{\omega }%
_{N,2}\right\} +\mathcal{R}_{\varepsilon },
\end{eqnarray*}%
for any $\varepsilon >0$ where the remainder term $\mathcal{R}_{\varepsilon
} $ is easily seen to satisfy 
\begin{equation*}
\mathcal{R}_{\varepsilon }\lesssim _{\varepsilon }\mathcal{A}_{2}^{2}\left(
\int \widehat{\dot{\sigma}}_{N}\right) ,
\end{equation*}%
since the supports of $\delta _{\frac{1}{2}}+\widehat{\dot{\sigma}}_{N,2}$
and $\widehat{\omega }_{N,1}$ are well separated, as are those of $\delta _{%
\frac{1}{2}}+\widehat{\dot{\sigma}}_{N,1}$ and $\widehat{\omega }_{N,2}$.
This is proved exactly as in \cite{LaSaUr2} so we will be brief. We have%
\begin{eqnarray*}
&&\int \left\vert R_{1}^{1,2}\left( \widehat{\dot{\sigma}}_{N,1}+\delta _{%
\frac{1}{2}}+\widehat{\dot{\sigma}}_{N,2}\right) \right\vert ^{2}\widehat{%
\omega }_{N,1} \\
&\lesssim &\int \left\{ \left( 1+\varepsilon \right) \left\vert R_{1}^{1,2}%
\widehat{\dot{\sigma}}_{N,1}\right\vert ^{2}+\left( 1+\frac{1}{\varepsilon }%
\right) \left\vert R_{1}^{1,2}\left( \delta _{\frac{1}{2}}+\widehat{\dot{%
\sigma}}_{N,2}\right) \right\vert ^{2}\right\} \widehat{\omega }_{N,1},
\end{eqnarray*}%
and then,%
\begin{equation*}
\int \left\vert R_{1}^{1,2}\widehat{\dot{\sigma}}_{N,2}\right\vert ^{2}%
\widehat{\omega }_{N,1}=\int_{\left[ 0,\frac{1}{3}\right] }\left\vert \int_{%
\left[ \frac{2}{3},1\right] }\frac{1}{y-x}\widehat{\dot{\sigma}}_{N,2}\left(
y\right) \right\vert ^{2}\widehat{\omega }_{N}\left( x\right) \lesssim 
\mathcal{A}_{2}^{2}\int \widehat{\dot{\sigma}}_{N,2}.
\end{equation*}

But now we note that%
\begin{equation*}
\int \left\vert R_{1}^{1,2}\widehat{\dot{\sigma}}_{N,1}\right\vert ^{2}%
\widehat{\omega }_{N,1}=\frac{2}{9}\int \left\vert R_{1}^{1,2}\widehat{\dot{%
\sigma}}_{N}\right\vert ^{2}\widehat{\omega }_{N},
\end{equation*}%
and similarly $\int \left\vert R_{1}^{1,2}\widehat{\dot{\sigma}}%
_{N,2}\right\vert ^{2}\widehat{\omega }_{N,2}=\frac{2}{9}\int \left\vert
R_{1}^{1,2}\widehat{\dot{\sigma}}_{N}\right\vert ^{2}\widehat{\omega }_{N}$.
Thus we have

\begin{equation}
\int \left\vert R_{1}^{1,2}\widehat{\dot{\sigma}}_{N}\right\vert ^{2}%
\widehat{\omega }_{N}=\frac{2}{9}\left( 1+\varepsilon \right) \int
\left\vert R_{1}^{1,2}\widehat{\dot{\sigma}}_{N}\right\vert ^{2}\widehat{%
\omega }_{N}+\frac{2}{9}\left( 1+\varepsilon \right) \int \left\vert
R_{1}^{1,2}\widehat{\dot{\sigma}}_{N}\right\vert ^{2}\widehat{\omega }_{N}+%
\mathcal{R}_{\varepsilon },  \label{reproduce}
\end{equation}%
and since $\int \left\vert R_{1}^{1,2}\widehat{\dot{\sigma}}_{N}\right\vert
^{2}\widehat{\omega }_{N}$ is finite we conclude that%
\begin{equation*}
\int \left\vert R_{1}^{1,2}\widehat{\dot{\sigma}}_{N}\right\vert ^{2}%
\widehat{\omega }_{N}=\frac{1}{1-\frac{4}{9}\left( 1+\varepsilon \right) }%
\mathcal{R}_{\varepsilon }\lesssim _{\varepsilon }\mathcal{A}_{2}^{2}\left(
\int \widehat{\dot{\sigma}}_{N}\right) ,
\end{equation*}%
for $\varepsilon >0$ so small that $1-\frac{4}{9}\left( 1+\varepsilon
\right) >0$. This completes the proof of the forward testing condition in (%
\ref{testing conditions}) for the cube $\widehat{I}=\widehat{\left[ 0,1%
\right] }$ with respect to the weight pair $\left( \widehat{\dot{\sigma}}%
_{N},\widehat{\omega }_{N}\right) $ uniformly in $N\geq 1$. The proof for
the case $\widehat{L}=\widehat{L_{j}^{k}}$ is similar using $\mathcal{R}%
_{\varepsilon }\left( \widehat{L_{j}^{k}}\right) \leq C_{\varepsilon }%
\mathcal{A}_{2}^{2}\left( \int_{\widehat{L_{j}^{k}}}\dot{\sigma}\right) $,
and the general case now follows without much extra work.

Having verified the forward testing condition for the weight pair $\left( 
\widehat{\dot{\sigma}}_{N},\widehat{\omega }_{N}\right) $, we now show that
the forward testing condition in (\ref{testing conditions}) holds for $%
\left( \widehat{\sigma }_{N},\widehat{\omega }_{N}\right) $. For this, we
estimate the difference as in \cite{LaSaUr2}$\ $by%
\begin{equation*}
\int_{\widehat{L_{r}^{\ell }}}\left\vert R_{1}^{1,2}\mathbf{1}_{\widehat{%
L_{r}^{\ell }}}\left( \widehat{\sigma }_{N}-\widehat{\dot{\sigma}}%
_{N}\right) \right\vert ^{2}\widehat{\omega }_{N}\lesssim C\int_{\widehat{%
L_{r}^{\ell }}}\left\vert \sum_{\left( k,j\right) :z_{j}^{k}\in I_{r}^{\ell
}}s_{j}^{k}\left( \frac{\left\vert I_{j}^{k}\right\vert }{\left\vert
x-\left( z_{j}^{k},\pm \frac{1}{4\cdot 3^{k+1}}\right) \right\vert ^{2}}%
\right) \right\vert ^{2}\widehat{\omega }_{N}\left( x\right) .
\end{equation*}%
Now for any fixed $x$ in the support of $\widehat{\omega }_{N}$ inside $%
\widehat{L_{r}^{\ell }}$, we have just as in \cite{LaSaUr2} that%
\begin{equation*}
\sum_{\left( k,j\right) :z_{j}^{k}\in I_{r}^{\ell }}s_{j}^{k}\left( \frac{%
\left\vert I_{j}^{k}\right\vert }{\left\vert x-\left( z_{j}^{k},\pm \frac{1}{%
4\cdot 3^{k+1}}\right) \right\vert ^{2}}\right) \lesssim \left( \frac{2}{3}%
\right) ^{\ell }.
\end{equation*}%
Thus we get%
\begin{equation*}
\int_{\widehat{L_{r}^{\ell }}}\left\vert R_{1}^{1,2}\mathbf{1}_{\widehat{%
L_{r}^{\ell }}}\left( \widehat{\sigma }_{N}-\widehat{\dot{\sigma}}%
_{N}\right) \right\vert ^{2}\widehat{\omega }_{N}\lesssim \left( \frac{2}{3}%
\right) ^{2\ell }\widehat{\omega }_{N}(L_{r}^{\ell })=C^{2}\left( \frac{2}{3}%
\right) ^{2\ell }2^{-\ell }\approx \widehat{\sigma }_{N}(\widehat{%
L_{r}^{\ell }}),
\end{equation*}%
which yields%
\begin{equation*}
\left( \int_{\widehat{L_{r}^{\ell }}}\left\vert R_{1}^{1,2}\mathbf{1}_{%
\widehat{L_{r}^{\ell }}}\widehat{\sigma }_{N}\right\vert ^{2}\widehat{\omega 
}_{N}\right) ^{\frac{1}{2}}\lesssim \left( \int_{\widehat{L_{r}^{\ell }}%
}\left\vert R_{1}^{1,2}\mathbf{1}_{\widehat{L_{r}^{\ell }}}\widehat{\dot{%
\sigma}}_{N}\right\vert ^{2}\widehat{\omega }_{N}\right) ^{\frac{1}{2}%
}+\left( \int_{\widehat{L_{r}^{\ell }}}\left\vert R_{1}^{1,2}\mathbf{1}_{%
\widehat{L_{r}^{\ell }}}\left( \widehat{\sigma }_{N}-\widehat{\dot{\sigma}}%
_{N}\right) \right\vert ^{2}\widehat{\omega }_{N}\right) ^{\frac{1}{2}%
}\lesssim C\sqrt{\widehat{\sigma }_{N}(\widehat{L_{r}^{\ell }})}.
\end{equation*}%
This is the case $I=I_{r}^{\ell }$ of the forward testing condition in (\ref%
{testing conditions}) for the weight pair $\left( \widehat{\sigma }_{N},%
\widehat{\omega }_{N}\right) $ uniformly in $N\geq 1$, and the general case
follows from this by an additional argument. This additional argument is
given explicitly in a related situation in \cite[Subsubsection 5.2.4 \emph{%
Completion of the proof for general intervals}]{SaShUr11}, to which we refer
the reader for details.

\subsection{The backward testing condition}

Finally, we turn to the dual testing condition for $R_{1}^{1,2}$ in (\ref%
{testing conditions}) with respect to the weight pair $\left( \widehat{%
\sigma }_{N},\widehat{\omega }_{N}\right) $, namely 
\begin{equation*}
\mathfrak{T}_{R_{1}^{1,2}}^{\ast }\equiv \sup_{\text{squares }I}\sqrt{\frac{1%
}{\left\vert I\right\vert _{\widehat{\omega }_{N}}}\int_{I}\left\vert H_{%
\widehat{\omega }_{N}}\mathbf{1}_{I}\right\vert ^{2}d\widehat{\sigma }_{N}}%
<\infty .
\end{equation*}%
For an interval $L_{r}^{\ell }$ with $z_{j}^{k}\in L_{r}^{\ell }$, we claim
that 
\begin{equation}
\left\vert R_{1}^{1,2}\left( \mathbf{1}_{\widehat{L_{r}^{\ell }}}\widehat{%
\omega }_{N}\right) \left( z_{j}^{k},\pm \frac{1}{4\cdot 3^{k+1}}\right)
\right\vert \lesssim \mathrm{P}^{1}\left( \widehat{L_{r}^{\ell }},\widehat{%
\omega }_{N}\right) .  \label{claim Poisson}
\end{equation}%
This is a substantial improvement over the estimate $\left\vert
R_{1}^{1,2}\left( \mathbf{1}_{\widehat{L_{r}^{\ell }}}\widehat{\omega }%
_{N}\right) \left( c_{j}^{k},0\right) \right\vert \lesssim \left( \frac{3}{2}%
\right) ^{k}$, and is a consequence of the fact that the points $\left(
z_{j}^{k},\pm \frac{1}{4\cdot 3^{k+1}}\right) $ are \emph{zeroes} of the
function $R_{1}^{1,2}\widehat{\omega }_{N}$. To see (\ref{claim Poisson})
let $I_{s}^{\ell -1}$ denote the parent of $I_{r}^{\ell }$, and let $%
I_{r+1}^{\ell }$ denote the other child of $I_{s}^{\ell -1}$. Then we have
using $R_{1}^{1,2}\widehat{\omega }_{N}\left( z_{j}^{k},\pm \frac{1}{4\cdot
3^{k+1}}\right) =0$, 
\begin{eqnarray*}
R_{1}^{1,2}\left( \mathbf{1}_{\widehat{L_{r}^{\ell }}}\widehat{\omega }%
_{N}\right) \left( z_{j}^{k},\pm \frac{1}{4\cdot 3^{k+1}}\right)
&=&-R_{1}^{1,2}\left( \mathbf{1}_{\left( \widehat{L_{r}^{\ell }}\right) ^{c}}%
\widehat{\omega }_{N}\right) \left( z_{j}^{k},\pm \frac{1}{4\cdot 3^{k+1}}%
\right) \\
&=&-R_{1}^{1,2}\left( \mathbf{1}_{\left( \widehat{L_{s}^{\ell -1}}\right)
^{c}}\omega \right) \left( z_{j}^{k},\pm \frac{1}{4\cdot 3^{k+1}}\right)
-R_{1}^{1,2}\left( \mathbf{1}_{\widehat{L}_{r+1}^{\ell }}\omega \right)
\left( z_{j}^{k},\pm \frac{1}{4\cdot 3^{k+1}}\right) .
\end{eqnarray*}%
Now we have using $R_{1}^{1,2}\widehat{\omega }_{N}\left( {z_{r}^{\ell }}%
,\pm \frac{1}{4\cdot 3^{\ell +1}}\right) =0$ that%
\begin{eqnarray*}
&&R_{1}^{1,2}\left( \mathbf{1}_{\left( \widehat{L_{s}^{\ell -1}}\right) ^{c}}%
\widehat{\omega }_{N}\right) \left( z_{j}^{k},\pm \frac{1}{4\cdot 3^{k+1}}%
\right) =R_{1}^{1,2}\left( \mathbf{1}_{\left( \widehat{L_{s}^{\ell -1}}%
\right) ^{c}}\widehat{\omega }_{N}\right) \left( {z_{r}^{\ell }},\pm \frac{1%
}{4\cdot 3^{\ell +1}}\right) \\
&&\ \ \ \ \ \ \ \ \ \ \ \ \ \ \ \ \ \ \ \ -\left\{ R_{1}^{1,2}\left( \mathbf{%
1}_{\left( \widehat{L_{s}^{\ell -1}}\right) ^{c}}\widehat{\omega }%
_{N}\right) \left( {z_{r}^{\ell }},\pm \frac{1}{4\cdot 3^{\ell +1}}\right)
-R_{1}^{1,2}\left( \mathbf{1}_{\left( \widehat{L_{s}^{\ell -1}}\right) ^{c}}%
\widehat{\omega }_{N}\right) \left( z_{j}^{k},\pm \frac{1}{4\cdot 3^{k+1}}%
\right) \right\} \\
&=&-R_{1}^{1,2}\left( \mathbf{1}_{\widehat{I_{s}^{\ell -1}}}\widehat{\omega }%
_{N}\right) \left( {z_{r}^{\ell }},\pm \frac{1}{4\cdot 3^{\ell +1}}\right)
-A,
\end{eqnarray*}%
where 
\begin{equation*}
A\equiv R_{1}^{1,2}\left( \mathbf{1}_{\left( \widehat{L_{s}^{\ell -1}}%
\right) ^{c}}\widehat{\omega }_{N}\right) \left( {z_{r}^{\ell }},\pm \frac{1%
}{4\cdot 3^{\ell +1}}\right) -R_{1}^{1,2}\left( \mathbf{1}_{\left( \widehat{%
L_{s}^{\ell -1}}\right) ^{c}}\widehat{\omega }_{N}\right) \left(
z_{j}^{k},\pm \frac{1}{4\cdot 3^{k+1}}\right) .
\end{equation*}%
Combining equalities yields 
\begin{eqnarray*}
&&R_{1}^{1,2}\left( \mathbf{1}_{\widehat{L_{r}^{\ell }}}\widehat{\omega }%
_{N}\right) \left( z_{j}^{k},\pm \frac{1}{4\cdot 3^{k+1}}\right) \\
&=&R_{1}^{1,2}\left( \mathbf{1}_{\left( \widehat{L_{s}^{\ell -1}}\right)
^{c}}\widehat{\omega }_{N}\right) \left( z_{j}^{k},\pm \frac{1}{4\cdot
3^{k+1}}\right) +A-R_{1}^{1,2}\left( \mathbf{1}_{\widehat{L}_{r+1}^{\ell
}}\omega \right) \left( z_{j}^{k},\pm \frac{1}{4\cdot 3^{k+1}}\right) .
\end{eqnarray*}%
We then have for $\left( k,j\right) $ such that $z_{j}^{k}\in I_{r}^{\ell }$,%
\begin{eqnarray*}
&&\left\vert R_{1}^{1,2}\left( \mathbf{1}_{\widehat{L_{s}^{\ell -1}}}%
\widehat{\omega }_{N}\right) \left( {z_{r}^{\ell }},\pm \frac{1}{4\cdot
3^{\ell +1}}\right) \right\vert \lesssim \frac{\widehat{\omega }_{N}(%
\widehat{L_{s}^{\ell -1}})}{\left\vert {I_{s}^{\ell -1}}\right\vert }, \\
&&\left\vert A\right\vert \lesssim \int_{\left( \widehat{L_{s}^{\ell -1}}%
\right) ^{c}}\left\vert K\left( x-\left( {z_{r}^{\ell }},\pm \frac{1}{4\cdot
3^{\ell +1}}\right) \right) -K\left( x-\left( z_{j}^{k},\pm \frac{1}{4\cdot
3^{k+1}}\right) \right) \right\vert \widehat{\omega }_{N}\left( x\right) \\
&&\ \ \ \ \ \ \ \ \ \ \ \ \ \ \ \ \ \ \ \ \ \ \ \ \ \ \ \ \ \ \lesssim
\int_{\left( \widehat{I_{s}^{\ell -1}}\right) ^{c}}\frac{\left\vert
I_{r}^{\ell }\right\vert }{\left\vert x-\left( z_{s}^{\ell -1},\pm \frac{1}{%
4\cdot 3^{\ell }}\right) \right\vert ^{2}}\widehat{\omega }_{N}\left(
x\right) , \\
&&\left\vert R_{1}^{1,2}\left( \mathbf{1}_{\widehat{L_{r+1}^{\ell }}}%
\widehat{\omega }_{N}\right) \left( z_{j}^{k},\pm \frac{1}{4\cdot 3^{k+1}}%
\right) \right\vert \lesssim \frac{\widehat{\omega }_{N}(L_{s}^{\ell -1})}{%
\left\vert {L_{s}^{\ell -1}}\right\vert },
\end{eqnarray*}%
which proves \eqref{claim Poisson}.

Now we compute, using \eqref{claim Poisson} and the estimate $\mathrm{P}%
^{1}\left( I_{r}^{\ell },\omega \right) \lesssim \frac{\widehat{\omega }_{N}(%
\widehat{L_{r}^{\ell }})}{\left\vert {L_{r}^{\ell }}\right\vert }$ proved
above, that%
\begin{eqnarray*}
\widehat{\int_{L_{r}^{\ell }}}\left\vert R_{1}^{1,2}\left( \mathbf{1}_{%
\widehat{L_{r}^{\ell }}}\widehat{\omega }_{N}\right) \right\vert ^{2}d%
\widehat{{\sigma }}_{N} &=&\sum_{\left( k,j\right) :z_{j}^{k}\in I_{r}^{\ell
}}\left\vert R_{1}^{1,2}\left( \mathbf{1}_{\widehat{L_{r}^{\ell }}}\widehat{%
\omega }_{N}\right) \left( z_{j}^{k},\pm \frac{1}{4\cdot 3^{k+1}}\right)
\right\vert ^{2}s_{j}^{k}\leq C\sum_{\left( k,j\right) :z_{j}^{k}\in
I_{r}^{\ell }}\left\vert \mathrm{P}^{1}\left( \widehat{L_{r}^{\ell }},%
\widehat{\omega }_{N}\right) \right\vert ^{2}s_{j}^{k} \\
&\lesssim &\widehat{{\sigma }}_{N}(\widehat{L_{r}^{\ell }})\left( \frac{%
\widehat{\omega }_{N}(L_{r}^{\ell })}{\left\vert L_{r}^{\ell }\right\vert }%
\right) ^{2}\lesssim \mathcal{A}_{2}\widehat{\omega }_{N}(L_{r}^{\ell }).
\end{eqnarray*}%
This is the case $I=I_{r}^{\ell }$ of the dual testing condition\ in (\ref%
{testing conditions}) for the weight pair $\left( \widehat{\sigma }_{N},%
\widehat{\omega }_{N}\right) $ uniformly in $N\geq 1$, and the general case
follows from this, just as for the forward testing condition above, using
the additional argument given in \cite[Subsubsection 5.2.4 \emph{Completion
of the proof for general intervals}]{SaShUr11}, but adapted to the backward
testing condition.

\section{Testing conditions for the second component $R_{2}^{1,2}$}

In this section we establish both forward and backward testing conditions
for $R_{2}^{1,2}$ with respect to the weight pairs $\left( \widehat{\sigma }%
_{N},\widehat{\omega }_{N}\right) $ uniformly in $N\geq 1$. Recall that $%
\mathbf{K}^{1,2}\left( w_{1},w_{2}\right) =\left( \frac{w_{1}}{%
w_{1}^{2}+w_{2}^{2}},\frac{w_{2}}{w_{1}^{2}+w_{2}^{2}}\right) $, and $%
s_{i}^{k}=\left( \frac{1}{3}\right) ^{k}\left( \frac{2}{3}\right) ^{k}$. We
consider first the backward testing condition for $R_{2}^{1,2}$.

\subsection{The backward testing condition}

Recall the estimate (\ref{estimate}), 
\begin{equation*}
R_{2}^{1,2}\widehat{\omega }_{N}\left( z_{i}^{k},\frac{1}{4\cdot 3^{k+1}}%
\right) \approx 2^{-N}3^{k},
\end{equation*}%
and the definitions 
\begin{equation*}
\widehat{\sigma }_{N}^{+}=\sum_{k=0}^{N-1}\sum_{i=1}^{2^{k}}s_{i}^{k}\delta
_{\left( z_{i}^{\ell },\frac{1}{4\cdot 3^{\ell +1}}\right) }\ ,
\end{equation*}%
and%
\begin{eqnarray*}
\widehat{\omega }_{N} &=&2^{-N-2}\sum_{k=0}^{N-1}\sum_{i=1}^{2^{k}}\left(
\delta _{\left( c_{\limfunc{left}}^{N+1}\left( G_{i}^{k}\right) +\frac{1}{%
4\cdot 3^{k+1}},\frac{1}{4\cdot 3^{k+1}}\right) }+\delta _{\left( c_{%
\limfunc{left}}^{N+1}\left( G_{i}^{k}\right) +\frac{1}{4\cdot 3^{k+1}},-%
\frac{1}{4\cdot 3^{k+1}}\right) }\right) \\
&&+2^{-N-2}\sum_{k=0}^{N-1}\sum_{i=1}^{2^{k}}\left( \delta _{\left( c_{%
\limfunc{right}}^{N+1}\left( G_{i}^{k}\right) -\frac{1}{4\cdot 3^{k+1}},%
\frac{1}{4\cdot 3^{k+1}}\right) }+\delta _{\left( c_{\limfunc{right}%
}^{N+1}\left( G_{i}^{k}\right) -\frac{1}{4\cdot 3^{k+1}},-\frac{1}{4\cdot
3^{k+1}}\right) }\right) .
\end{eqnarray*}%
Thus we have the estimate%
\begin{eqnarray*}
\int \int_{\widehat{\left[ 0,1\right] }}\left\vert R_{2}^{1,2}\left( \mathbf{%
1}_{\widehat{\left[ 0,1\right] }}\widehat{\omega }_{N}\right) \right\vert
^{2}d\widehat{\sigma }_{N}
&=&\sum_{k=0}^{N-1}\sum_{i=1}^{2^{k}}s_{i}^{k}\left\vert R_{2}^{1,2}\widehat{%
\omega }_{N}\left( z_{i}^{k},\frac{1}{4\cdot 3^{k+1}}\right) \right\vert ^{2}
\\
&\approx &\sum_{k=0}^{N-1}\sum_{i=1}^{2^{k}}\left( \frac{1}{3}\right)
^{k}\left( \frac{2}{3}\right) ^{k}\left( 2^{-N}3^{k}\right) ^{2} \\
&=&2^{-2N}\sum_{k=0}^{N-1}2^{2k}\approx 1=\left\vert \widehat{\left[ 0,1%
\right] }\right\vert _{\widehat{\omega }_{N}}.
\end{eqnarray*}

More generally, for any square $\widehat{L_{j}^{k}}$, we have%
\begin{equation*}
\int \int_{\widehat{L_{j}^{\ell }}}\left\vert R_{2}^{1,2}\left( \mathbf{1}_{%
\widehat{L_{j}^{\ell }}}\widehat{\omega }_{N}\right) \right\vert ^{2}d%
\widehat{\sigma }_{N}=\sum_{k=\ell }^{N-1}\sum_{i:\ L_{i}^{k}\subset
L_{j}^{\ell }}\left( \frac{1}{3}\right) ^{k}\left( \frac{2}{3}\right)
^{k}\left( 2^{-N}3^{k}\right) ^{2}=\sum_{k=\ell }^{N-1}2^{k-\ell
}2^{k-2N}\approx 2^{-\ell }\approx \left\vert \widehat{L_{j}^{\ell }}%
\right\vert _{\widehat{\omega }_{N}},
\end{equation*}%
and the general case follows easily.

\subsection{The forward testing condition}

Now we turn to the forward testing condition for $R_{2}^{1,2}$, namely the
inequality%
\begin{equation*}
\int \int_{Q}\left\vert R_{2}^{1,2}\left( \mathbf{1}_{Q}\widehat{\sigma }%
_{N}\right) \right\vert ^{2}d\widehat{\omega }_{N}\lesssim \left\vert
Q\right\vert _{\widehat{\sigma }_{N}},\ \ \ \ \ \text{for all squares }%
Q\subset \widehat{\left[ 0,1\right] },
\end{equation*}%
and just as for the first component $R_{1}^{1,2}$ above, we first prove the
forward testing condition for the measure pair $\left( \widehat{\dot{\sigma}}%
_{N},\widehat{\omega }_{N}\right) $,%
\begin{equation*}
\int \int_{Q}\left\vert R_{2}^{1,2}\left( \mathbf{1}_{Q}\widehat{\dot{\sigma}%
}_{N}\right) \right\vert ^{2}d\widehat{\omega }_{N}\lesssim \left\vert
Q\right\vert _{\widehat{\dot{\sigma}}_{N}},\ \ \ \ \ \text{for all squares }%
Q\subset \widehat{\left[ 0,1\right] }.
\end{equation*}%
We have already noted in (\ref{self-similar}) the replication identities
satisfied by the measure pair $\left( \widehat{\dot{\sigma}}_{N},\widehat{%
\omega }_{N}\right) $. Just as for $R_{1}^{1,2}$ we compute for the operator 
$R_{2}^{1,2}$ that 
\begin{equation*}
\int \left\vert R_{2}^{1,2}\widehat{\dot{\sigma}}_{N,2}\right\vert ^{2}%
\widehat{\omega }_{N,1}=\frac{2}{9}\int \left\vert R_{2}^{1,2}\widehat{\dot{%
\sigma}}_{N}\right\vert ^{2}\widehat{\omega }_{N}=\int \left\vert R_{2}^{1,2}%
\widehat{\dot{\sigma}}_{N,2}\right\vert ^{2}\widehat{\omega }_{N,2}\ ,
\end{equation*}%
and then that%
\begin{eqnarray*}
\int \left\vert R_{2}^{1,2}\widehat{\dot{\sigma}}_{N}\right\vert ^{2}\omega
&=&\int \left\vert R_{2}^{1,2}\left( \widehat{\dot{\sigma}}_{N,1}+\delta _{%
\frac{1}{2}}+\widehat{\dot{\sigma}}_{N,2}\right) \right\vert ^{2}\widehat{%
\omega }_{N,1}+\int \left\vert R_{2}^{1,2}\left( \widehat{\dot{\sigma}}%
_{N,1}+\delta _{\frac{1}{2}}+\widehat{\dot{\sigma}}_{N,2}\right) \right\vert
^{2}\widehat{\omega }_{N,2} \\
&=&\left( 1+\varepsilon \right) \left\{ \int \left\vert R_{2}^{1,2}\widehat{%
\dot{\sigma}}_{N,1}\right\vert ^{2}\widehat{\omega }_{N,1}+\int \left\vert
R_{2}^{1,2}\widehat{\dot{\sigma}}_{N,2}\right\vert ^{2}\widehat{\omega }%
_{N,2}\right\} +\mathcal{R}_{\varepsilon },
\end{eqnarray*}%
for $\varepsilon >0$ where the remainder term $\mathcal{R}_{\varepsilon }$
is easily seen to satisfy 
\begin{equation*}
\mathcal{R}_{\varepsilon }\lesssim _{\varepsilon }\mathcal{A}_{2}^{2}\left(
\int \widehat{\dot{\sigma}}_{N}\right) ,
\end{equation*}%
since the supports of $\delta _{\frac{1}{2}}+\widehat{\dot{\sigma}}_{N,2}$
and $\widehat{\omega }_{N,1}$ are well separated, as are those of $\delta _{%
\frac{1}{2}}+\widehat{\dot{\sigma}}_{N,1}$ and $\widehat{\omega }_{N,2}$.
Now we simply proceed as before, and leave the details to the intereseted
reader. Finally, just as we did for $R_{1}^{1,2}$ above, we use a
perturbation argument to obtain the forward testing condition for the
measure pair $\left( \widehat{\sigma }_{N},\widehat{\omega }_{N}\right) $,
uniformly in $N\geq 1$.

\section{The norm inequality}

Here we show that the norm inequality for $\mathbf{R}^{1,2}$ holds with
respect to the weight pair $\left( \widehat{\sigma }_{N},\widehat{\omega }%
_{N}\right) $ uniformly in $N$. We first observe that we have already
established above the following facts for the weight pairs $\left( \widehat{%
\sigma }_{N}^{+},\widehat{\omega }_{N}\right) $ uniformly in $N\geq 1$. Let $%
\widehat{\sigma }_{N}^{-}$ denote the reflection of $\widehat{\sigma }%
_{N}^{+}$ across the $x_{1}$-axis.

\begin{enumerate}
\item The Muckenhoupt/NTV condition $\mathcal{A}_{2}$ holds:%
\begin{equation*}
\sup_{Q\in \mathcal{P}^{2}}\left\{ \frac{\left\vert Q\right\vert _{\widehat{%
\sigma }_{N}^{+}}}{\sqrt{\left\vert Q\right\vert }}\cdot \mathcal{P}^{1}(Q,%
\widehat{\omega }_{N})+\mathcal{P}^{1}(Q,\widehat{\sigma }_{N}^{+})\frac{%
\left\vert Q\right\vert _{\widehat{\omega }_{N}}}{\sqrt{\left\vert
Q\right\vert }}\right\} =\mathcal{A}_{2}<\infty .
\end{equation*}

\item The forward testing condition holds:%
\begin{equation}
\int_{Q}\left\vert \mathbf{R}^{1,2}\left( \mathbf{1}_{Q}\widehat{\sigma }%
_{N}^{+}\right) \right\vert ^{2}d\widehat{\omega }_{N}\lesssim \left\vert
Q\right\vert _{\widehat{\sigma }_{N}^{+}}\ .  \label{ftc}
\end{equation}%
Indeed, if a square $Q$ is symmetric about the $x_{1}$-axis, then both $%
\left\vert Q\right\vert _{\widehat{\sigma }_{N}^{+}}=\left\vert Q\right\vert
_{\widehat{\sigma }_{N}^{-}}=\frac{1}{2}\left\vert Q\right\vert _{\widehat{%
\sigma }_{N}}$ and 
\begin{equation*}
\int_{Q}\left\vert \mathbf{R}^{1,2}\left( \mathbf{1}_{Q}\widehat{\sigma }%
_{N}^{+}\right) \right\vert ^{2}d\widehat{\omega }_{N}=\int_{Q}\left\vert 
\mathbf{R}^{1,2}\left( \mathbf{1}_{Q}\widehat{\sigma }_{N}^{-}\right)
\right\vert ^{2}d\widehat{\omega }_{N}
\end{equation*}%
by symmetry. Since the testing condition holds for the weight pair $\left( 
\widehat{\sigma }_{N},\widehat{\omega }_{N}\right) $, we easily obtain (\ref%
{ftc}) for such symmetric squares $Q$. The general case now follows easily
from this.

\item The backward testing condition holds:%
\begin{equation*}
\int_{Q}\left\vert \mathbf{R}^{1,2}\left( \mathbf{1}_{Q}\widehat{\omega }%
_{N}\right) \right\vert ^{2}d\widehat{\sigma }_{N}^{+}\lesssim \left\vert
Q\right\vert _{\widehat{\omega }_{N}}\ ,
\end{equation*}%
since it holds for the larger measure $\widehat{\sigma }_{N}$ in place of $%
\widehat{\sigma }_{N}^{+}$.

\item The forward energy condition holds:%
\begin{equation*}
\sum_{\overset{\cdot }{\bigcup }_{r=1}^{\infty }Q_{r}\subset R}\left( \frac{%
\mathrm{P}^{1}\left( Q_{r},\mathbf{1}_{R}\widehat{\sigma }_{N}^{+}\right) }{%
\sqrt{\left\vert Q_{r}\right\vert }}\right) ^{2}\left\Vert \mathsf{P}%
_{Q_{r}}^{\widehat{\omega }}x\right\Vert _{L^{2}\left( \widehat{\omega }%
_{N}\right) }^{2}\lesssim \left\vert R\right\vert _{\widehat{\sigma }%
_{N}^{+}}\ .
\end{equation*}

\item The backward energy condition holds:%
\begin{equation*}
\sum_{\overset{\cdot }{\bigcup }_{r=1}^{\infty }Q_{r}\subset R}\left( \frac{%
\mathrm{P}^{1}\left( Q_{r},\mathbf{1}_{R}\widehat{\omega }_{N}\right) }{%
\sqrt{\left\vert Q_{r}\right\vert }}\right) ^{2}\left\Vert \mathsf{P}%
_{Q_{r}}^{\widehat{\sigma }_{N}^{+}}x\right\Vert _{L^{2}\left( \widehat{%
\sigma }_{N}^{+}\right) }^{2}\lesssim \left\vert R\right\vert _{\widehat{%
\omega }_{N}}\ .
\end{equation*}
\end{enumerate}

Now we can apply our $T1$ theorem with an energy side condition\ in \cite%
{SaShUr7} (or see \cite{SaShUr6} or \cite{SaShUr9}) to obtain the dual norm
inequality%
\begin{eqnarray*}
\int \left\vert \mathbf{R}^{1,2}\left( g\widehat{\omega }_{N}\right)
\right\vert ^{2}d\widehat{\sigma }_{N}^{+} &\lesssim &\int \left\vert
g\right\vert ^{2}d\widehat{\omega }_{N}, \\
\text{i.e. }\mathfrak{N}_{\mathbf{R}^{1,2}}\left( \widehat{\sigma }_{N}^{+},%
\widehat{\omega }_{N}\right) &<&\infty .
\end{eqnarray*}%
Consider now the weight pair $\left( \widehat{\sigma }_{N},\widehat{\omega }%
_{N}\right) $. We have $\widehat{\sigma }_{N}=\widehat{\sigma }_{N}^{+}+%
\widehat{\sigma }_{N}^{-}$ and $\mathfrak{N}_{\mathbf{R}^{1,2}}\left( \left( 
\widehat{\sigma }_{N}^{-},\widehat{\omega }_{N}\right) \right) =\mathfrak{N}%
_{\mathbf{R}^{1,2}}\left( \widehat{\sigma }_{N}^{+},\widehat{\omega }%
_{N}\right) $ by symmetry, and so%
\begin{equation*}
\mathfrak{N}_{\mathbf{R}^{1,2}}\left( \left( \widehat{\sigma }_{N},\widehat{%
\omega }_{N}\right) \right) \leq \mathfrak{N}_{\mathbf{R}^{1,2}}\left( 
\widehat{\sigma }_{N}^{+},\widehat{\omega }_{N}\right) +\mathfrak{N}_{%
\mathbf{R}^{1,2}}\left( \left( \widehat{\sigma }_{N}^{-},\widehat{\omega }%
_{N}\right) \right) =2\mathfrak{N}_{\mathbf{R}^{1,2}}\left( \widehat{\sigma }%
_{N}^{+},\widehat{\omega }_{N}\right) <\infty .
\end{equation*}

Thus we have shown that the two weight norm inequality for the Riesz
transform $\mathbf{R}^{1,2}$ holds in the plane with respect to the weight
pair $\left( \widehat{\sigma }_{N},\widehat{\omega }_{N}\right) $ uniformly
in $N\geq 1$, and in Subsubsection \ref{Subsubfails} above, we showed that
the backward energy constants with respect to the weight pairs $\left( 
\widehat{\sigma }_{N},\widehat{\omega }_{N}\right) $ are unbounded in $N\geq
1$. This completes the proof of Theorem \ref{counterexample} in the special
case $\alpha =1$ and $n=2$.

\section{The general case $0\leq \protect\alpha <n$ and $n\geq 2$}

The measure pair $\left( \widehat{\sigma }_{N},\widehat{\omega }_{N}\right) $
just constructed above in the plane serves to show that the energy
conditions are not implied by boundedness of the fractional Riesz transform $%
\mathbf{R}^{n-1,n}$ of order $n-1$ in $\mathbb{R}^{n}$ for $n\geq 2$ -
simply embed the measures in the two-dimensional subspace $\mathbb{R}^{2}$
spanned by the unit coordinate vectors $\mathbf{e}_{1}$ and$\ \mathbf{e}_{2}$%
\textbf{. }The reason for this is that the restriction of the convolution
kernel $\mathbf{K}^{n-1,n}\left( w\right) =\frac{\left(
w_{1}w_{2},...,w_{n}\right) }{\left\vert \left( w_{1}w_{2},...,w_{n}\right)
\right\vert ^{n+1-\left( n-1\right) }}$ to $\mathbb{R}^{2}$ is the kernel $%
\mathbf{K}^{1,2}\left( w\right) =\frac{\left( w_{1}w_{2}\right) }{\left\vert
\left( w_{1}w_{2}\right) \right\vert ^{n+1-\left( n-1\right) }}$. If we
remain in dimension $n=2$, but permit $0\leq \alpha <2$, then the argument
above applies if we take%
\begin{equation*}
s_{i}^{k}=\left( \frac{1}{3}\right) ^{k\left( 3-2\alpha \right) }\left( 
\frac{1}{3}\right) ^{k},
\end{equation*}%
along with similar arithmetic adjustments elsewhere.

In the general case $0\leq \alpha <n$, $n\geq 2$, we start with the
computation that%
\begin{eqnarray*}
&&\frac{d}{dx_{1}}\frac{x_{1}-y_{1}}{\left[ \left( x_{1}-y_{1}\right)
^{2}+\left( x_{2}-y_{2}\right) ^{2}\right] ^{\frac{n+1-\alpha }{2}}} \\
&=&\frac{\left[ \left( x_{1}-y_{1}\right) ^{2}+\left( x_{2}-y_{2}\right) ^{2}%
\right] ^{\frac{n+1-\alpha }{2}}-\frac{n+1-\alpha }{2}2\left(
x_{1}-y_{1}\right) ^{2}\left[ \left( x_{1}-y_{1}\right) ^{2}+\left(
x_{2}-y_{2}\right) ^{2}\right] ^{\frac{n-1-\alpha }{2}}}{\left[ \left(
x_{1}-y_{1}\right) ^{2}+\left( x_{2}-y_{2}\right) ^{2}\right] ^{n+1-\alpha }}
\\
&=&\frac{\left( x_{1}-y_{1}\right) ^{2}+\left( x_{2}-y_{2}\right)
^{2}-\left( n+1-\alpha \right) \left( x_{1}-y_{1}\right) ^{2}}{\left[ \left(
x_{1}-y_{1}\right) ^{2}+\left( x_{2}-y_{2}\right) ^{2}\right] ^{\frac{%
n+3-\alpha }{2}}} \\
&=&\frac{\left( x_{2}-y_{2}\right) ^{2}-\left( n-\alpha \right) \left(
x_{1}-y_{1}\right) ^{2}}{\left[ \left( x_{1}-y_{1}\right) ^{2}+\left(
x_{2}-y_{2}\right) ^{2}\right] ^{\frac{5-\alpha }{2}}}<0,
\end{eqnarray*}%
provided%
\begin{equation*}
\left\vert x_{2}-y_{2}\right\vert <\sqrt{n-\alpha }\left\vert
x_{1}-y_{1}\right\vert .
\end{equation*}

Thus in the subcase $0\leq \alpha <n-1$ and $\left\vert
x_{2}-y_{2}\right\vert <\left\vert x_{1}-y_{1}\right\vert $, the $x_{1}$
derivative of the kernel $K_{1}^{\alpha ,n}\left( x-y\right) $ is negative,
and the above construction of a family of weight pairs in the plane can be
modified in a purely arithmetic way so as to show that the energy conditions
are not necessary for boundedness of the fractional Riesz transform $\mathbf{%
R}^{\alpha ,n}$. The modified measure pair $\left( \widehat{\sigma }_{N},%
\widehat{\omega }_{N}\right) $ lives in the two-dimensional subspace $%
\mathbb{R}^{2}$, and as a consequence, the components $R_{3}^{\alpha
,n},R_{4}^{\alpha ,n},...,R_{n}^{\alpha ,n}$ of $\mathbf{R}^{\alpha ,n}$ are
all trivially bounded since both $R_{j}^{\alpha ,n}\widehat{\sigma }%
_{N}\equiv 0$ and $R_{j}^{\alpha ,n}\widehat{\omega }_{N}\equiv 0$ for $%
j\geq 3$.

However, in the subcase $n-1<\alpha <n$, we must alter the geometry as well,
by translating the point masses of $\omega _{N}$ at an angle less than $%
\theta _{\alpha ,n}$ instead of less than $\frac{\pi }{4}=45^{\circ }$, where%
\begin{equation*}
\tan \theta _{n,\alpha }=\gamma _{n,\alpha }=\sqrt{n-\alpha }.
\end{equation*}%
The angle $\theta _{n,\alpha }$ is less than $\frac{\pi }{4}=45^{\circ }$
precisely when $n-1<\alpha <n$, and with this geometric alteration, the
above construction again goes through with only changes in arithmetic.

\begin{remark}
\label{loc unif}If $\left( \widehat{\sigma }_{N},\widehat{\omega }%
_{N}\right) $ is the weight pair constructed above, then a very lengthy but
straightforward computation shows that the family of localized operators $%
\left\{ \mathbf{R}_{\mathcal{J}}^{\alpha ,n}\Theta _{j}\right\} _{\mathcal{J}%
\in \mathfrak{J}\text{ and }1\leq j\leq N}$ is uniformly bounded from $%
L^{2}\left( \widehat{\sigma }_{N}\right) $ to $L^{2}\left( \widehat{\omega }%
_{N}\right) $. Indeed, the weight pair $\left( \widehat{\sigma }_{N},%
\widehat{\omega }_{N}\right) $ satisfies the Muckenhoupt and energy
conditions uniformly in $N\geq 1$ by Lemma \ref{CZalpha}, and the Calder\'{o}%
n-Zygmund norms of the kernels of $\mathbf{R}_{\mathcal{J}}^{\alpha
,n}\Theta _{j}$ are uniformly bounded for $\mathcal{J}\in \mathfrak{J}$ and $%
1\leq j\leq N$. Finally, the testing constants $\mathfrak{T}_{\mathbf{R}_{%
\mathcal{J}}^{\alpha ,n}\Theta _{j}}\left( \widehat{\sigma }_{N},\widehat{%
\omega }_{N}\right) $ are uniformly bounded for $\in \mathbb{N}$, $\mathcal{J%
}\in \mathfrak{J}$ and $1\leq j\leq M$. Thus from the $T1$ theorem in \cite%
{SaShUr7} with an energy side condition, we obtain the boundedness of the
operators $\mathbf{R}_{\mathcal{J}}^{\alpha ,n}\Theta _{j}$ from $%
L^{2}\left( \widehat{\sigma }_{N}\right) $ to $L^{2}\left( \widehat{\omega }%
_{N}\right) $ uniform in $N\geq 1$. We leave details to the interested
reader.
\end{remark}


\begin{thebibliography}{LaSaShUrWi}
\bibitem[DaJo]{DaJo} \textsc{David, Guy, Journ\'{e}, Jean-Lin,} \textit{A
boundedness criterion for generalized Calder\'{o}n-Zygmund operators,} Ann.
of Math. (2) \textbf{120} (1984), 371--397, MR763911 (85k:42041).

\bibitem[DaJoSe]{DaJoSe} \textsc{David,G.,Journ\'{e},J.-L.,andSemmes,S.,}Op%
\'{e}rateurs de Calder\'{o}n-Zygmund, fonctions para-accr\'{e}tives et
interpolation. Rev. Mat. Iberoamericana 1 (1985), 1--56.

\bibitem[HuMuWh]{HuMuWh} \textsc{R. Hunt, B. Muckenhoupt} \textsc{and R. L.
Wheeden,} \textit{Weighted norm inequalities for the conjugate function and
the Hilbert transform}, Trans. Amer. Math. Soc. \textbf{176} (1973), 227-251.

\bibitem[Hyt2]{Hyt2} \textsc{Hyt\"{o}nen, Tuomas, }\textit{The two weight
inequality for the Hilbert transform with general measures, \texttt{%
arXiv:1312.0843v2}.}

\bibitem[Lac]{Lac} \textsc{Lacey, Michael T.,}\textit{\ Two weight
inequality for the Hilbert transform: A real variable characterization, II},
Duke Math. J. Volume \textbf{163}, Number 15 (2014), 2821-2840.

\bibitem[Lac2]{Lac2} \textsc{Lacey, Michael T.,}\textit{\ The two weight
inequality for the Hilbert transform: a primer}, \texttt{arXiv:1304.5004v1}.

\bibitem[LaMa]{LaMa} \textsc{M. T. Lacey and H. Martikainen,} \textit{Local }%
$Tb$\textit{\ theorem with }$L^{2}$\textit{\ testing conditions and general
measures: Calder\'{o}n--Zygmund operators}, \texttt{arXiv:1310.08531v1.}

\bibitem[LaSaUr1]{LaSaUr1} \textsc{Lacey, Michael T., Sawyer, Eric T.,
Uriarte-Tuero, Ignacio,} \textit{A characterization of two weight norm
inequalities for maximal singular integrals with one doubling measure,}
Analysis \& PDE, Vol. \textbf{5} (2012), No. 1, 1-60.

\bibitem[LaSaUr2]{LaSaUr2} \textsc{Lacey, Michael T., Sawyer, Eric T.,
Uriarte-Tuero, Ignacio,} \textit{A Two Weight Inequality for the Hilbert
transform assuming an energy hypothesis, } Journal of Functional Analysis,
Volume \textbf{263} (2012), Issue 2, 305-363.

\bibitem[LaSaShUr]{LaSaShUr} \textsc{Lacey, Michael T., Sawyer, Eric T.,
Shen, Chun-Yen, Uriarte-Tuero, Ignacio,} \textit{The Two weight inequality
for Hilbert transform, coronas, and energy conditions, }\texttt{arXiv:}
(2011).

\bibitem[LaSaShUr2]{LaSaShUr2} \textsc{Lacey, Michael T., Sawyer, Eric T.,
Shen, Chun-Yen, Uriarte-Tuero, Ignacio,} \textit{Two Weight Inequality for
the Hilbert Transform: A Real Variable Characterization, }\texttt{%
arXiv:1201.4319} (2012).

\bibitem[LaSaShUr3]{LaSaShUr3} \textsc{Lacey, Michael T., Sawyer, Eric T.,
Shen, Chun-Yen, Uriarte-Tuero, Ignacio,} \textit{Two weight inequality for
the Hilbert transform: A real variable characterization I}, Duke Math. J,
Volume \textbf{163}, Number 15 (2014), 2795-2820.

\bibitem[LaSaShUrWi]{LaSaShUrWi} \textsc{Lacey, Michael T., Sawyer, Eric T.,
Shen, Chun-Yen, Uriarte-Tuero, Ignacio, Wick, Brett D.,} \textit{Two weight
inequalities for the Cauchy transform from }$\mathbb{R}$ to $\mathbb{C}_{+}$%
, \textit{\texttt{arXiv:1310.4820v4}}.

\bibitem[LaWi1]{LaWi1} \textsc{Lacey, Michael T., Wick, Brett D.,} \textit{%
Two weight inequalities for the Cauchy transform from }$\mathbb{R}$ to $%
\mathbb{C}_{+}$, \textit{\texttt{arXiv:1310.4820v1}}.

\bibitem[LaWi]{LaWi} \textsc{Lacey, Michael T., Wick, Brett D.,} \textit{Two
weight inequalities for Riesz transforms: uniformly full dimension weights}, 
\textit{\texttt{arXiv:1312.6163v1,v2,v3}}.

\bibitem[NTV4]{NTV4} \textsc{F. Nazarov, S. Treil and A. Volberg,} \textit{%
Two weight estimate for the Hilbert transform and corona decomposition for
non-doubling measures}, preprint (2004) \texttt{arxiv:1003.1596}

\bibitem[Saw1]{MR676801} \textsc{E. Sawyer,} \textit{A characterization of a
two-weight norm inequality for maximal operators}, Studia Math. \textbf{75}
(1982), 1-11, MR\{676801 (84i:42032)\}.

\bibitem[Saw]{Saw3} \textsc{E. Sawyer,} \textit{A characterization of two
weight norm inequalities for fractional and Poisson integrals}, Trans.
A.M.S. \textbf{308} (1988), 533-545, MR\{930072 (89d:26009)\}.

\bibitem[SaShUr2]{SaShUr2} \textsc{Sawyer, Eric T., Shen, Chun-Yen,
Uriarte-Tuero, Ignacio,} A \textit{two weight theorem for }$\alpha $\textit{%
-fractional singular integrals with an energy side condition}, \texttt{%
arXiv:1302.5093v8.}

\bibitem[SaShUr3]{SaShUr3} \textsc{Sawyer, Eric T., Shen, Chun-Yen,
Uriarte-Tuero, Ignacio,} \textit{A geometric condition, necessity of energy,
and two weight boundedness of fractional Riesz transforms}, \texttt{%
arXiv:1310.4484v1.}

\bibitem[SaShUr4]{SaShUr4} \textsc{Sawyer, Eric T., Shen, Chun-Yen,
Uriarte-Tuero, Ignacio,} \textit{A note on failure of energy reversal for
classical fractional singular integrals}, IMRN, Volume \textbf{2015}, Issue
19, 9888-9920.

\bibitem[SaShUr5]{SaShUr5} \textsc{Sawyer, Eric T., Shen, Chun-Yen,
Uriarte-Tuero, Ignacio,} A \textit{two weight theorem for }$\alpha $\textit{%
-fractional singular integrals with an energy side condition and quasicube
testing}, \texttt{arXiv:1302.5093v10.}

\bibitem[SaShUr6]{SaShUr6} \textsc{Sawyer, Eric T., Shen, Chun-Yen,
Uriarte-Tuero, Ignacio,} A \textit{two weight theorem for }$\alpha $\textit{%
-fractional singular integrals with an energy side condition, quasicube
testing and common point masses}, \texttt{arXiv:1505.07816v2,v3.}

\bibitem[SaShUr7]{SaShUr7} \textsc{Sawyer, Eric T., Shen, Chun-Yen,
Uriarte-Tuero, Ignacio,} A \textit{two weight theorem for }$\alpha $\textit{%
-fractional singular integrals with an energy side condition}, Revista Mat.
Iberoam. \textbf{32} (2016), no. 1, 79-174.

\bibitem[SaShUr8]{SaShUr8} \textsc{Sawyer, Eric T., Shen, Chun-Yen,
Uriarte-Tuero, Ignacio,} The \textit{two weight }$T1$ \textit{theorem for
fractional Riesz transforms when one measure is supported on a curve}, 
\texttt{arXiv:1505.07822v4}.

\bibitem[SaShUr9]{SaShUr9} \textsc{Sawyer, Eric T., Shen, Chun-Yen,
Uriarte-Tuero, Ignacio,} A \textit{two weight fractional singular integral
theorem with side conditions, energy and }$k$\textit{-energy dispersed,}
Harmonic Analysis, Partial Differential Equations, Complex Analysis, Banach
Spaces, and Operator Theory (Volume 2) (Celebrating Cora Sadosky's life),
Springer 2017 (see also \texttt{arXiv:1603.04332v2}).

\bibitem[SaShUr10]{SaShUr10} \textsc{Sawyer, Eric T., Shen, Chun-Yen,
Uriarte-Tuero, Ignacio,} \textit{A good-}$\lambda $\textit{\ lemma, two
weight }$T1$\textit{\ theorems without weak boundedness, and a two weight
accretive global }$Tb$\textit{\ theorem,} Harmonic Analysis, Partial
Differential Equations and Applications (In Honor of Richard L. Wheeden),
Birkh\"{a}user 2017 (see also \texttt{arXiv:1609.08125v2}).

\bibitem[SaShUr11]{SaShUr11} \textsc{Sawyer, Eric T., Shen, Chun-Yen,
Uriarte-Tuero, Ignacio,} \textit{A counterexample in the theory of Calder%
\'{o}n-Zygmund operators, }\texttt{arXiv:16079.06071v3}.

\bibitem[SaShUr12]{SaShUr12} \textsc{Sawyer, Eric T., Shen, Chun-Yen,
Uriarte-Tuero, Ignacio,} \textit{A two weight local }$Tb$ theorem for the
Hilbert transform\textit{, }\texttt{arXiv:1709.09595v6}.

\bibitem[SaWh]{SaWh} \textsc{E. Sawyer and R. L. Wheeden,} Weighted
inequalities for fractional integrals on Euclidean and homogeneous spaces, 
\textit{Amer. J. Math. }\textbf{114} (1992), 813-874.

\bibitem[Ste]{Ste} \textsc{E. M. Stein,} \textit{Harmonic Analysis:
real-variable methods, orthogonality, and oscillatory integrals},\textit{\ }%
Princeton University Press, Princeton, N. J., 1993.

\bibitem[Vol]{Vol} \textsc{A. Volberg,} \textit{Calder\'{o}n-Zygmund
capacities and operators on nonhomogeneous spaces,} CBMS Regional Conference
Series in Mathematics (2003), MR\{2019058 (2005c:42015)\}.
\end{thebibliography}
\end{document}